\newcommand{\F}{\mathcal{F}}
\newcommand{\A}{\mathcal{A}}
\newcommand{\B}{\mathcal{B}}
\newcommand{\D}{\mathcal{D}}
\newcommand{\V}{\mathcal{V}}
\newcommand{\N}{\mathcal{N}}
\newcommand{\aH}{\mathcal{H}}
\newcommand{\Pa}{\mathcal{P}}
\newcommand{\nchn}{\binom{n}{\lfloor \frac{n}{2}\rfloor}}
\newcommand{\mchm}{\binom{m}{\lfloor \frac{m}{2}\rfloor}}
\newcommand{\ex}{{\rm ex}}
\newcommand{\Oh}{\mathcal{O}}
\newcommand{\La}{{\rm La}}
\newcommand{\E}{{\rm E}}
\renewcommand{\Pr}{{\rm Pr}}
\newtheorem{lemma}{Lemma}
\newtheorem{theorem}{Theorem}
\newtheorem{corollary}{Corollary}
\newtheorem{definition}{Definition}
\newtheorem{conjecture}{Conjecture}
\newtheorem{prop}{Proposition}
\def\nchn{{n \choose \lfloor \frac{n}{2}\rfloor}}
\title{Tur\'an Problems on Non-uniform Hypergraphs}
\author{
Travis Johnston \thanks{University of South Carolina, Columbia, SC 29208,
({\tt j.travis.johnston@gmail.com}).} \and 
Linyuan Lu
\thanks{University of South Carolina, Columbia, SC 29208,
({\tt lu@math.sc.edu}). This author was supported in part by NSF
grant  DMS 1000475. }
}
\begin{document}
\maketitle

\begin{abstract}
A non-uniform hypergraph $H=(V,E)$ consists of a vertex set $V$ and
an edge set $E\subseteq 2^V$; 
the edges in $E$ are not required to all have the same cardinality.
The set of all cardinalities of edges in $H$ is denoted by $R(H)$, the set of edge types.
For a fixed hypergraph $H$, the Tur\'an density $\pi(H)$ is defined to be 
$\lim_{n\to\infty}\max_{G_n}h_n(G_n)$,
where the maximum is taken over all $H$-free hypergraphs $G_n$ on $n$
vertices satisfying $R(G_n)\subseteq R(H)$,  and $h_n(G_n)$, the so called
Lubell function,  is the expected number of edges in $G_n$ hit by a random full chain.
This concept, which generalizes  the Tur\'an density of $k$-uniform hypergraphs,
 is motivated by recent work on extremal poset problems. 
The details connecting these two areas will be revealed in the end of this paper.

Several properties of Tur\'an density, such as supersaturation,
blow-up, and suspension, are generalized from uniform hypergraphs to non-uniform hypergraphs. 
Other questions such as ``Which hypergraphs are degenerate?" are more complicated and don't appear to generalize well.
In addition, we completely determine the Tur\'an densities of $\{1,2\}$-hypergraphs.
\end{abstract}

\section{Introduction}
A hypergraph $H$ is a pair $(V,E)$; $V$ is the vertex set, and
$E\subseteq 2^V$ is the edge set.  If all edges have the same
cardinality $k$, then $H$ is a $k$-uniform hypergraph.  Tur\'an
problems on $k$-uniform hypergraphs have been actively studied for
many decades.  However, Tur\'an problems on non-uniform hypergraphs
are rarely considered (see \cite{MZ, Lemons} for two different
treatments).   
Very recently, several groups of people have
started actively studying extremal families of sets avoiding given
sub-posets.  Several new problems have been established.  One of them
is the diamond problem:

{\bf The diamond conjecture: \cite{GriLu}} Any family $\F$ of subsets of
$[n]:=\{1,2,\ldots,n\}$ with no four sets $A,B,C,D$ satisfying
$A\subseteq B\cap C$, $B\cup C \subseteq D$
can have at most $(2+o(1))\nchn$ subsets.

This conjecture, along with many other problems, motivates us to study
Tur\'an-type problems on non-uniform hypergraphs.  
The details of this connection will be given in the last section.

We briefly review the history of Tur\'an Problems on uniform
hypergraphs.  Given a positive integer $n$ and a $k$-uniform
hypergraph $H$ on $n$ vertices (or $k$-graph, for short), the Tur\'an
number $\ex(n,H)$ is the maximum number of edges in a $k$-graph on
$n$ vertices that does not contain $H$ as a subgraph; such a graph is
called $H$-\emph{free}.  Katona et al. \cite{KNS} showed that
$f(n,H)=\ex(n,H)/{n\choose k}$ is a decreasing function of $n$.  The
limit $\displaystyle \pi(H)=\lim_{n\to \infty} f(n,H)$, which always exists, is
called the {\em Tur\'an density} of $H$.

For $k=2$, the graph case, Erd\H{o}s-Stone-Simonovits proved
$\pi(G)=1-\frac{1}{\chi(G)-1}$ for any graph $G$ with chromatic number
$\chi(G)\geq 3$.  If $G$ is bipartite, then $\ex(n,G)=o(n^2)$.  The
magnitude of $\ex(n,G)$ is unknown for most bipartite graphs $G$.

Let $K^r_k$ denote the complete $r$-graph on $k$ vertices.
Tur\'an determined the value of $\ex(n,K^2_{k})$ which 
implyies that $\pi(K^2_k)=1-\frac{1}{k-1}$ for all $k\geq 3$.
However, no Tur\'an density $\pi(K^r_k)$ is known for any $k>r\geq 3$.  
The most extensively studied case is when $k=4$ and $r=3$.
Tur\'an conjectured \cite{Tu} that $\pi(K_4^3)= 5/9$.
Erd\H{o}s \cite{Er81} offered \$500 for determining any $\pi(K^r_k)$
with $k>r\geq 3$ and \$1000 for answering it for all $k$ and $r$.
The upper bounds for $\pi(K_4^3)$ have been sequentially improved:
$0.6213$ (de Caen \cite{dC94}), $0.5936$ (Chung-Lu \cite{ChLu}),
$0.56167$ (Razborov \cite{Razborov}, using flag
algebra method.)  There are a few uniform hypergraphs whose Tur\'an
density has been determined: the Fano plane \cite{FS05,KS05a},
expanded triangles \cite{KS05b}, $3$-books, $4$-books \cite{FMP},
 $F_5$ \cite{FrFu83}, extended complete graphs \cite{Pik}, etc.
In particular, Baber \cite{baber} recently found the Tur\'an density
of many $3$-uniform hypergraphs using flag algebra methods.
For a more complete survey of methods and results on uniform hypergraphs see Peter Keevash's survey paper \cite{KeevashSurvey}.

A non-uniform hypergraph $H=(V,E)$ consists of a vertex set $V$ and
an edge set $E\subseteq 2^V$. Here the edges of $E$ could have
different cardinalities. The set of all the cardinalities of edges in $H$ is 
denoted by $R(H)$, the set of edge types. For a fixed hypergraph $H$,
the Tur\'an density $\pi(H)$ is defined to be 
$\lim_{n\to\infty}\max_{G_n}h_n(G_n)$,
where the maximum is taken over all $H$-free hypergraphs $G_n$ on $n$
vertices satisfying $R(G_n)\subseteq R(H)$.
$h_n(G_n)$, the so called Lubell function,  is the
expected number of edges in $G_n$ hit by a random full chain.
The Lubell function has been a very useful tool in extremal poset theory,
in particular it has been used in the study of the diamond conjecture.

In section 2, we show that our notion of $\pi(H)$ is well-defined
and is consistent with the usual definition for uniform hypergraphs.
We also give examples of Tur\'an densities for several non-uniform hypergraphs.
In section 3, we generalize the supersaturation Lemma to non-uniform hypergraphs.
Then we prove that blowing-up will not affect the Tur\'an density. 
Using various techniques, we determine the Tur\'an density of every
$\{1,2\}$-hypergraph in section 4. 
Remarkably, the Tur\'an densities of $\{1,2\}$-hypergraphs are in the set
$$\bigg\{1,\frac{9}{8}, \frac{5}{4}, \frac{3}{2}, \frac{5}{3},
\ldots, 2-\frac{1}{k},\ldots \bigg \}.$$

Among $r$-uniform hypergraphs, $r$-partite hypergraphs have the smallest possible Tur\'an density.
Erd\H{o}s proved that any $r$-uniform hypergraph forbidding
the complete $r$-uniform $r$-partite hypergraphs can have
at most $O(n^{r-1/\delta})$ edges. We generalize this theorem 
to non-uniform hypergraphs. 
A hypergraph is degenerate if it has the smallest possible Tur\'an density.
For $r$-uniform hypergraphs, a hypergraph $H$ is degenerate if and only if $H$ is the subgraph of a blow-up of a single edge.
Unlike the degenerate $r$-uniform hypergraphs, the degenerate non-uniform hypergraphs are not classified. 
For non-uniform hypergraphs, chains--one natural extension of a single edge--are degenerate.
Additionally, every subgraph of a blow-up of a chain is also degenerate.
However, we give an example of a degenerate, non-uniform hypergraph not contained in any blow-up of a chain.
This leaves open the question of which non-uniform hypergraphs are degenerate.

In section 6, we consider the suspension of hypergraphs.
The suspension of a hypergraph $H$ is a new hypergraph, denoted by
$S(H)$, with one additional vertex, $\ast$, added to every edge of $H$.
In a hypergraph Tur\'an problem workshop hosted by 
the AIM Research Conference Center in 2011, the following conjecture
was posed: $\displaystyle \lim_{t\to\infty}\pi(S^t(K^{r}_n))=0$. We conjecture
$\displaystyle\lim_{t\to\infty} \pi(S^t(H))=|R(H)|-1$ holds for any hypergraph $H$.
Some partial results are proved.
Finally in the last section, we will point out the relation between
the Tur\'an problems on hypergraphs and extremal poset problems.

\section{Non-uniform hypergraphs}
\subsection{Notation}
Recall that a hypergraph $H$ is a pair $(V,E)$ with the vertex set $V$ and edge set $E\subseteq 2^{V}$.
Here we place no restriction on the cardinalities of edges.
The set $R(H):=\{|F|\colon F\in E\}$ is called the set of its {\em edge types}.
A hypergraph $H$ is $k$-uniform if $R(H)=\{k\}$.
It is non-uniform if it has at least two edge types.
For any $k\in R(H)$, the {\em level hypergraph} $H^k$ is the hypergraph consisting of all $k$-edges of $H$.
A uniform hypergraph $H$ has only one (non-empty) level graph, i.e., $H$ itself.
In general, a non-uniform hypergraph $H$ has $|R(H)|$ (non-empty) level hypergraphs.
Throughout the paper, for any finite set $R$ of non-negative integers, we say, $G$ is an $R$-graph if $R(G)\subseteq R$.
We write $G^R_n$ for a hypergraph on $n$ vertices with $R(G)\subseteq R$.
We simplify it to $G$ if $n$ and $R$ are clear under context.

Let $R$ be a fixed set of edge types.
Let $H$ be an $R$-graph.
The number of vertices in $H$ is denoted by $v(H):=|V(H)|$.
Our goal is to measure the edge density of $H$ and be able to compare it (in a meaningful way) to the edge density of other $R$ graphs.
The standard way to measure this density would be:
\[\mu(H) = \frac{|E(H)|}{\sum_{k\in R(H)}\binom{v(H)}{k}}.\]
This density ranges from 0 to 1 (as one would expect)--a complete $R$-graph having a density of 1.
Unfortunately, this is no longer a useful measure of density since the number of edges with maximum cardinality will dwarf the number of edges of all other sizes.
Specifically, one could take $k$-uniform hypergraph (where $k=\max\{r:r\in R(H)\}$) on enough vertices and make its density as close to 1 he likes.
The problem is that this $k$-uniform hypergraph is quite different from the complete $R$-graph (when $|R|>1$) with the same number of vertices.
Instead, we use the Lubell function to measure the edge density.
This is adapted from the use of the Lubell function studying families of subsets.

For a non-uniform hypergraph $G$ on $n$ vertices, we define  the Lubell function of $G$ as 
\begin{equation} \label{eq:lubell}
	h_{n}(G):=\sum_{F\in E(G)}\frac{1}{\binom{n}{|F|}}=\sum_{k\in R(G)}\frac{|E(H^{k})|}{\binom{n}{k}}.   
\end{equation}

The Lubell function is the expected number of edges hit by a random full chain.
Namely, pick a uniformly random permutation $\sigma$ on $n$ vertices; 
define a random full chain $C_\sigma$ by
\[\{ \{\emptyset\}, \{\sigma(1)\}, \{\sigma(1), \sigma(2)\}, \cdots, [n]\}.\]
Let $X=|E(G)\cap C_{\sigma}|$, the number of edges hit by the random full chain. 
Then
\begin{equation} \label{eq:X}
  h_n(G)=\E(X).
\end{equation}

Given two hypergraphs $H_1$ and $H_2$, we say $H_1$ is a subgraph of $H_2$, denoted by $H_1\subseteq H_2$, 
if there exists a 1-1 map $f\colon V(H_1)\to V(H_2)$ so that $f(F)\in E(H_2)$ for any $F\in E(H_1)$. 
Whenever this occurs, we say the image $f(H_1)$ is an {\em ordered copy} of $H_2$, written as $H_1\stackrel{f}{\hookrightarrow} H_2$.
A necessary condition for $H_1\subseteq H_2$ is $R(H_1)\subseteq R(H_2)$.
Given a subset $K\subseteq V(H)$ and a subset $S\subseteq R(H)$, the {\em induced subgraph}, 
denoted by $H^{[S]}[K]$,  is a hypergraph on $K$ with the edge set $\{F\in E(H)\colon F\subseteq K \mbox{ and } |F|\in S\}$.
When $S=R(H)$, we simply write $H[K]$ for $H^{[S]}[K]$.

Given a positive integer $n$ and  a subset $R\subseteq [n]$, 
the complete hypergraph $K^R_{n}$ is a hypergraph on $n$ vertices with edge set $\bigcup_{i\in R} \binom{[n]}{i}$.
For example, $K^{\{k\}}_n$ is the complete $k$-uniform hypergraph.
$K^{[k]}_n$ is the non-uniform hypergraph with all possible edges of cardinality at most $k$.

\begin{center}
\begin{tikzpicture}[vertex_open/.style={circle, draw=black, fill=white}, vertex_closed/.style={circle, draw=black, fill=black}] 
\shadedraw[left color=black!50!white, right color=black!10!white, draw=black!10!white] (0,0)--(2,1)--(2,-1)--cycle;
\node at (0,0) [vertex_open] (v1) [label=above:{1}] {};
\node at (2,1) [vertex_open] (v2) [label=above:{2}] {};
\node at (2,-1) [vertex_open] (v3) [label=below:{3}] {};
\draw [draw=black, ultra thick] (v1)--(v2);
\draw [draw=black, ultra thick] (v1)--(v3);
\draw [draw=black, ultra thick] (v2)--(v3);
\node at (1,-2) {Illustration of $K_{3}^{\{2,3\}}$} ;
\end{tikzpicture}
\begin{tikzpicture}[vertex_open/.style={circle, draw=black, fill=white}, vertex_closed/.style={circle, draw=black, fill=black}]
\shadedraw[left color=black!15!white, right color=black!70!white, draw=black!1!white] (0,0)--(2,0)--(3.5,2)--cycle;
\shadedraw[left color=black!15!white, right color=black!70!white, draw=black!1!white, opacity=.65] (2,0)--(3.5,2)--(2,4)--cycle;
\shadedraw[left color=black!15!white, right color=black!70!white, draw=black!1!white, opacity=.65] (3.5,2)--(2,4)--(0,4)--cycle;
\shadedraw[left color=black!15!white, right color=black!70!white, draw=black!1!white, opacity=.65] (2,4)--(0,4)--(-1.5,2)--cycle;
\shadedraw[left color=black!15!white, right color=black!70!white, draw=black!1!white, opacity=.65] (0,4)--(-1.5,2)--(0,0)--cycle;
\shadedraw[left color=black!15!white, right color=black!70!white, draw=black!1!white, opacity=.65] (-1.5,2)--(0,0)--(2,0)--cycle;

\node at (0,0) [vertex_open] (v1) [label=below:{1}] {};
\node at (2,0) [vertex_open] (v2) [label=below:{2}] {};
\node at (3.5,2) [vertex_open] (v3) [label=right:{3}] {};
\node at (2,4) [vertex_open] (v4) [label=above:{4}] {};
\node at (0,4) [vertex_open] (v5) [label=above:{5}] {};
\node at (-1.5,2) [vertex_open] (v6) [label=left:{6}] {};
\draw [draw=black, ultra thick] (v1)--(v2)--(v3)--(v4)--(v5)--(v6)--(v1);
\node at (1,-1) {A (tight) cycle $C_{6}^{\{2,3\}}$};
\end{tikzpicture}

\end{center}

Given a family of hypergraphs $\aH$ with common set of edge-types $R$,
we define 
\[\pi_n(\aH):= \max \left\{h_{n}(G)\colon v(G)=n, G\subseteq K^{R}_{n}, \text{ and } G \mbox{ contains no subgraph in }\aH \right\}.\]

A hypergraph $G:=G_n^R$  is {\em extremal} with respect to the family $\aH$ if
\begin{enumerate}
\item $G$ contains no subgraph in $\aH$.
\item $h_n(G)=\pi_n(\aH)$.
\end{enumerate}

The Tur\'an density of $\aH$ is defined to be 
\begin{align*}
	\pi(\mathcal{\aH}):&= \lim_{n\to \infty} \pi_n(\aH) \\
  			   &= \lim_{n\to \infty} \max \left\{\sum_{F\in E(G)} \frac{1}{\binom{n}{|F|}}\colon v(G)=n, G\subseteq K^{R}_{n}, 
			   	\text{ and } G \mbox{ contains no subgraph in }\aH \right\}
\end{align*}
when the limit exists; we will soon show that this limit always exists.
When $\aH$ contains one hypergraph $H$, then we write $\pi(H)$ instead of $\pi(\{H\})$.

Throughout, we will consider $n$ growing to infinity, and $R$ to be a fixed set (not growing with $n$).  
Note that $\pi(\mathcal{H})$ agrees with the \textit{usual} definition of 
\[\displaystyle \pi(\mathcal{H})=\lim_{n\to\infty}\frac{\text{ex}(\mathcal{H},n)}{\binom{n}{k}}\]
when $\mathcal{H}$ is a set of $k$-uniform hypergraphs.
The following result is a direct generalization Katona-Nemetz-Simonovit's theorem \cite{KNS}.

\begin{theorem}
For any family $\aH$ of hypergraphs with a common edge-type $R$,
$\pi(\mathcal{H})$ is well-defined, i.e. the limit $\displaystyle \lim_{n\to\infty}\pi_{n}(\aH)$ exists.
\end{theorem}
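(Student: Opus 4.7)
The plan is to imitate the classical Katona--Nemetz--Simonovits averaging argument, now applied to the Lubell function in place of a bare edge count. I will show that $\pi_n(\aH)$ is non-increasing in $n$ once $n$ is large enough that $R\subseteq\{1,\ldots,n-1\}$; since $\pi_n(\aH)\ge 0$, the limit then exists by monotone convergence. Note that $\pi_n(\aH)$ itself is well-defined because the max is taken over a finite set of hypergraphs on $n$ labelled vertices.

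Fix such an $n$ and let $G$ be an $\aH$-free $R$-graph on $n$ vertices achieving $h_n(G)=\pi_n(\aH)$. For every $(n-1)$-subset $K\subseteq V(G)$, the induced subgraph $G[K]$ is again $\aH$-free and has edge-types in $R$, so $h_{n-1}(G[K])\le\pi_{n-1}(\aH)$. The goal is to average these $n$ inequalities and recognize the mean as $h_n(G)$.

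Swapping the order of summation,
\[
\sum_{K\in\binom{V(G)}{n-1}} h_{n-1}(G[K]) \;=\; \sum_{F\in E(G)} \frac{n-|F|}{\binom{n-1}{|F|}},
\]
because each edge $F$ of size $k$ lies in exactly $n-k$ of the $(n-1)$-subsets $K$, contributing weight $1/\binom{n-1}{k}$ to each. The elementary identity $\frac{n-k}{\binom{n-1}{k}}=\frac{n}{\binom{n}{k}}$ rewrites the right-hand side as $n\cdot h_n(G)$, and dividing by $n$ gives $h_n(G)\le\pi_{n-1}(\aH)$, i.e.\ $\pi_n(\aH)\le\pi_{n-1}(\aH)$, as desired.

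A conceptually cleaner route avoids the binomial manipulation by using the random-chain interpretation (equation~(\ref{eq:X})): the first $n-1$ steps of a uniformly random full chain on $[n]$ form a uniformly random full chain on $K=[n]\setminus\{\sigma(n)\}$, and since no edge of $G$ has size $n$, any edge hit by the full chain is hit by its restriction to $K$. Taking expectations yields $h_n(G)=\E_K\bigl[h_{n-1}(G[K])\bigr]$ directly, from which the monotonicity follows as above. Either way, the only non-bookkeeping ingredient is the identity $\frac{n-k}{\binom{n-1}{k}}=\frac{n}{\binom{n}{k}}$; I anticipate no substantive obstacle.
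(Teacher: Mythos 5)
Your proof is correct and is essentially the paper's argument: you both establish that $\pi_n(\aH)$ is non-increasing by averaging the Lubell function over induced subgraphs of an extremal $G$ and using the binomial identity to recognize the average as $h_n(G)$. The only cosmetic difference is that you pass from $n$ to $n-1$ (with a nice random-chain restatement) while the paper passes directly from $n$ to an arbitrary $\ell<n$ via a random $\ell$-subset.
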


\begin{proof}
It suffices to show that $\pi_n(\aH)$, viewed as a sequence in $n$, is decreasing.

Write $R=\{k_1,k_2,...,k_{r}\}$.
Let $G_{n}\subseteq K^{R}_{n}$ be a hypergraph with $v(G_{n})=n$ not containing $\mathcal{H}$ and with Lubell value $h_n(G_{n})=\pi_n(\aH)$.
For any $\ell< n$, consider a random subset $S$ of the vertices of $G$ with size $|S|=\ell$.

Let $G_{n}[S]$ be the induced subgraph of $G_{n}$ (whose vertex set is
restricted to $S$).  Clearly \[\pi_\ell(\aH) \geq
\mathbb{E}(h_{\ell}(G_{n}[S])).\] Write $E(G_{n})=E_{k_1}\bigcup
E_{k_2}\bigcup ...\bigcup E_{k_r}$ where $E_{k_i}$ contains all the
edges of size $k_{i}$.  Note that the expected number of edges of size
$k_i$ in $G_{n}[S]$ is precisely
$\frac{\binom{\ell}{k_i}}{\binom{n}{k_i}}|E_{k_i}|$.  It follows that
\begin{align*}
  \pi_\ell(\aH) &\geq \mathbb{E}(h_{\ell}(G_{n}[S])) \\
  &=\sum_{i=1}^{r} \frac{\mathbb{E}(|E_{k_i} \bigcap \binom{S}{k_i}|)}{\binom{\ell}{k_{i}}} \\
  &=\sum_{i=1}^{r} \frac{ \frac{\binom{\ell}{k_{i}}}{\binom{n}{k_i}}|E_{k_i}|}{\binom{\ell}{k_{i}}} \\
  &=\sum_{i=1}^{r} \frac{ |E_{k_i}| }{\binom{n}{k_i}} \\
  &= h_{n}(G_{n})\\
  &=\pi_n(\aH).
\end{align*}
The sequence $\pi_n(\aH)$ is non-negative and decreasing; therefore it converges.
\end{proof}

For a fixed set $R:=\{k_1,k_2,\ldots, k_r\}$ (with $k_1<k_2<\cdots < k_r$), an {\em $R$-flag} is an $R$-graph containing exactly one edge of each size.
The chain $C^R$ is a special $R$-flag with the edge set
\[\E(C^R)=\{[k_1], [k_2],\ldots, [k_r]\}.\]

\begin{prop}  \label{p1}
For any hypergraph $H$, the following statements hold.
 \begin{enumerate}
\item $|R(H)|-1\leq \pi_n(H)\leq |R(H)|$.
\item For subgraph $H'$ of $H$, we have $\pi(H')\leq \pi(H)- |R(H)|+|R(H')|$.
\item For any $R$-flag $L$ on $m$ vertices and  any $n\geq m$, we have 
$\pi_n(L)=|R|-1$.
  \end{enumerate}
\end{prop}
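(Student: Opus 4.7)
The plan is to handle the three items largely independently, with a simple random-injection trick doing the heavy lifting for item (3).

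For item (1), the upper bound is immediate from the definition of $h_n$: any $G \subseteq K^{R(H)}_n$ has at most $\binom{n}{k}$ edges of each size $k \in R(H)$, so each summand in the Lubell sum is at most $1$, yielding $h_n(G) \le |R(H)|$. For the lower bound, pick any $k_0 \in R(H)$ and take $G := K^{R(H) \setminus \{k_0\}}_n$, the complete hypergraph on $n$ vertices using every edge size in $R(H)$ except $k_0$. This $G$ is automatically $H$-free because $H$ has an edge of size $k_0$ which cannot be matched in $G$, and a direct computation gives $h_n(G) = |R(H)| - 1$.

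For item (2), let $G$ be an $H'$-free $R(H')$-graph on $n$ vertices, and build $G'$ by adjoining to $G$ every possible edge whose size lies in $R(H) \setminus R(H')$. Then $R(G') \subseteq R(H)$ and $h_n(G') = h_n(G) + (|R(H)| - |R(H')|)$. The key observation is that $G'$ must still be $H$-free: composing any alleged embedding $H \hookrightarrow G'$ with the given subgraph embedding $H' \hookrightarrow H$ yields an embedding $H' \hookrightarrow G'$ whose image edges all have sizes in $R(H')$, hence actually lie in $G$ itself, contradicting the $H'$-freeness of $G$. Therefore $h_n(G) + |R(H)| - |R(H')| \le \pi_n(H)$, and taking $G$ extremal followed by $n \to \infty$ gives the claim.

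For item (3), the lower bound is just item (1), so only the upper bound is at issue. Enumerate the edges of $L$ as $F_1, \dots, F_r$ with $|F_i| = k_i$, and let $G$ be any $L$-free $R$-graph on $n \ge m$ vertices. Draw a uniformly random injection $f\colon V(L) \to V(G)$ and set $X := |\{i : f(F_i) \in E(G)\}|$. By symmetry, each $f(F_i)$ is a uniformly random $k_i$-subset of $V(G)$, so $\Pr(f(F_i) \in E(G)) = |E_{k_i}(G)|/\binom{n}{k_i}$, and summing yields $\E(X) = h_n(G)$. Since $X$ is integer-valued and bounded above by $r = |R|$, the inequality $h_n(G) > r-1$ would force $\Pr(X = r) > 0$, producing an injection $f$ realizing $L$ as a subgraph of $G$ and contradicting $L$-freeness; hence $\pi_n(L) \le |R| - 1$.

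The one place that really demands care is item (2), where one must keep straight the distinction between "$H'$ is a subgraph of $H$" and the much weaker condition "$R(H') \subseteq R(H)$", and verify that the freshly added edges of $G'$ cannot be pressed into service inside any $H'$-copy that descends from the putative $H$-copy. Items (1) and (3) are essentially immediate once one notices that a uniformly random injection from $V(L)$ to $V(G)$ is the natural analogue of the random full chain used to define the Lubell function.
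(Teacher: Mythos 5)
Your proof is correct and follows essentially the same route as the paper's: the same ``drop one edge-size'' construction for the lower bound of item (1), the same ``pad with all missing edge-sizes'' extension for item (2) (you spell out why $G'$ remains $H$-free, which the paper asserts without comment), and the same Lubell-function-as-expectation argument for item (3) — the paper phrases it as a random $n$-permutation $\sigma$ and bounds $X\le r-1$ directly from $L$-freeness, while you phrase it as a random injection $f\colon V(L)\to V(G)$ and run the contrapositive, but these are the same calculation.
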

{\bf Proof:} 
Pick any maximal proper subset $R'$ of $R(H)$.
Consider the complete graph $K_n^{R'}$. Since $K_n^{R'}$ misses one
type of edge in $R(H)\setminus R'$, it does not contain $H$ as a subgraph.
Thus
\[\pi_n(H)\geq h_n(K_n^{R'})=|R'|=|R(H)|-1.\]
The upper bound is due to the fact $h_n(K_n^{R(H)})=|R(H)|$.

Proof of item 2 is similar.  
Let $S=R(H')$ and $G^S_n$ be an extremal hypergraph for $\pi_n(H')$.
Extend $G^S_n$ to $G^{R(H)}_n$ by adding all the edges with cardinalities in $R(H)\setminus S$.
The resulting graph $G^{R(H)}_n$ is $H$-free.
We have
\[\pi_n(H)\geq \pi_n(G^{R(H)}_n)=\pi_{n}(G^S_n)+ |R(H)|-|S|= |R(H)|-|R(H')|+ \pi_n(H').\]
Taking the limit as $n$ goes to infinity, we have
\[\pi(H)\geq |R(H)|-|R(H')|+ \pi(H').\]

Finally, for item 3, consider $L$-free hypergraph $G_n^R$.  Pick a
random $n$-permutation $\sigma$ uniformly.  Let $X$ be the number of
edges of $G^R_n$ hit by a random flag $\sigma(L)$. 
Note that each edge $F$ has probability $\frac{1}{{n\choose |F|}}$ of being hit by $\sigma(L)$.
We have
\begin{equation} \label{eq:ef}
	\E(X)=\sum_{F\in E(G)}\frac{1}{{n\choose |F|}}=h_n(G).  
\end{equation}
Since $G^R_n$ is $L$-free, we have $X\leq r-1$.
Taking the expectation, we have \[h_n(G^R_n)=\E(X)\leq r-1.\]
Hence, $\pi_n(H)\leq r-1$. 
The result is followed after combining with item 1. \hfill $\square$
 
\begin{definition}
A hypergraph $H$ is called {\em degenerate} if $\pi(H)=|R(H)|-1$.
\end{definition}

By Proposition \ref{p1}, flags, and specifically chains, are degenerate hypergraphs.
A necessary condition for $H$ to be degenerate is that every level hypergraph $H^{k_i}$ is $k_i$-partite.
The following examples will show that the converse is not true.

{\bf Example 1:} The complete hypergraph $K^{\{1,2\}}_2$ has three edges
 $\{1\}$, $\{2\}$, and $\{1,2\}$. We claim 
 \begin{equation}
   \label{eq:k12}
   \pi(K^{\{1,2\}}_2)=\frac{5}{4}.
 \end{equation}
The lower bound is from the following construction.
Partition $[n]$ into two parts $A$ and $B$ of nearly equal size.
Consider the hypergraph $G$  with the edge set
$$E(G)={A\choose 1} \cup \left({[n]\choose 2}\setminus {A\choose 2}
\right).$$
It is easy to check $h_n(G)=\frac{5}{4}+O(\frac{1}{n})$ and that $G$ contains no copy of $K_{2}^{\{1,2\}}$.

Now we prove  the upper bound. 
Consider any $K^{\{1,2\}}_2$-free hypergraph $G$ of edge-type $\{1,2\}$ on $n$ vertices.
Let $A$ be the set of all singleton edges. 
For any $x,y\in A$, $xy$ is not a 2-edge of $G$.
We have
\begin{align*}
h_n(G)&\leq \frac{|A|}{n} + 1-\frac{{|A|\choose 2}}{{n\choose 2}} \\
      &= 1+ \frac{|A|}{n} -\frac{|A|^2}{n^2} + O\left(\frac{1}{n}\right) \\
      &\leq 1+ \frac{1}{4} + O\left(\frac{1}{n}\right).
\end{align*}
In the last step, we use the fact that $f(x)=1+x-x^2$ has the maximum value $\frac{5}{4}$. 
Combining the upper and lower bounds we have $\pi(K^{\{1,2\}}_2)=\frac{5}{4}$.

The argument is easily generalized to the complete graph $K^{\{1,k\}}_k$ (for $k>1$).
 We have
\begin{equation}
  \label{eq:k1k}
 \pi(K^{1,k}_k)=1+ \frac{k-1}{k^{\frac{k}{k-1}}}. 
\end{equation}

\begin{definition}
Let $H$ be a hypergraph.
The {\em suspension} of $H$, denoted by $S(H)$, is a new hypergraph with the vertex set $V(H)\cup \{\ast\}$ and
the edge set $\{F\cup \{\ast\}\colon F\in E(H)\}$. 
Here $\ast$ is a new vertex not in $H$.
\end{definition}

\begin{definition}
Let $H$ be a hypergraph.
The $k$-degree of a vertex $x$, denoted $d_{k}(x)$, is the number of edges of size $k$ containing $x$.
\end{definition}

{\bf Example 2:} Consider $H:=S(K^{\{1,2\}}_2)$.
The edges of $H$ are  $\{1,2\}$, $\{2,3\}$, $\{1,2,3\}$.
We claim \[\pi(S(K^{\{1,2\}}_2))= \frac{5}{4}.\]

The lower bound is from the following construction.
Partition $[n]$ into two parts $A$ and $B$ of nearly equal size.
Consider the hypergraph $G$ with the edge set $E=E_{2}\bigcup E_{3}$ where $E_{2}=\binom{A}{2}\bigcup \binom{B}{2}$ and
$E_{3}=\binom{[n]}{3}\setminus \left(\binom{A}{3} \bigcup \binom{B}{3}\right)$. 
It is easy to check $h_n(G)=\frac{5}{4}+O\left(\frac{1}{n}\right)$ and that $G$ is $H$-free.

Now we prove  the upper bound. 
Consider any $H$-free hypergraph $G$ of edge-type $\{2,3\}$ on $n$ vertices.
Recall that $d_{2}(v)$ denotes the number of 2-edges that contain $v$.
For each pair of 2-edges that intersect $v$ there is a unique 3-set containing those two pairs.
This 3-set cannot appear in the edge set of $G$ since $G$ is $H$-free.
We say that the edge is forbidden.
Note that each 3-edge may be forbidden up to 3 times in this manner--depending on which of the three vertices we call $v$.
Hence there are at least $\frac{1}{3} \sum_{v\in [n]} \binom{d_{2}(v)}{2}$ 3-edges which are not in $G$.
Note that this is true for any $H$-free graph $G$ with number of vertices.
Hence
\begin{align*}
h_n(G) &\leq \frac{ \binom{n}{3}-\frac{1}{3}\sum_{v\in [n]} \binom{d_{2}(v)}{2} }{\binom{n}{3}} + \frac{ \frac{1}{2}\sum_{v\in [n]} d_{2}(v)}{\binom{n}{2}} \\                
&= 1-\frac{\sum_{v\in [n]} d_2(v)^{2}}{6\binom{n}{3}} +\left(\frac{1}{6\binom{n}{3}}+\frac{1}{2\binom{n}{2}}\right)\sum_{v\in [n]} d_2(v) +1.
\end{align*}
Applying Cauchy-Schwarz Inequality and letting $m:=\sum_vd_2(v)$,
 we have
\begin{align*}
h_n(G) &\leq \frac{-\left(\sum_{v\in [n]} d_2(v)\right)^2}{6n\binom{n}{3}}
+\left(\frac{1}{6\binom{n}{3}}+\frac{1}{2\binom{n}{2}}\right)\sum_{v\in [n]} d_2(v) + 1 \\
&=-\frac{m^2}{n^4} + \frac{m}{n^2}+1 +O\left(\frac{1}{n}\right)\\
&\leq \frac{5}{4}  +O\left(\frac{1}{n}\right).
\end{align*}
In the last step, we use the fact that $f(x)=1+x-x^2$ has the maximum value $\frac{5}{4}$.
Taking the limit, we get $\pi(S(K_{2}^{\{1,2\}}))\leq \frac{5}{4}.$

We can generalize this construction, giving the following lower bound
for $S^k(K_2^{\{1,2\}})$ (the $k$-th suspension of $K_2^{\{1,2\}}$).
The details of the computation are omitted.
\begin{equation} \label{eq:sk122}
	\pi(S^k(K_2^{\{1,2\}}))\geq 1+ \frac{1}{2^{k+1}}.
\end{equation}

\begin{conjecture}\label{conj:1}
  For any $k\geq 2$, 
$\pi(S^k(K_2^{\{1,2\}}))= 1+ \frac{1}{2^{k+1}}.$
\end{conjecture}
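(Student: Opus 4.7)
The lower bound is given by the partition construction just before the statement (cf.~\eqref{eq:sk122}), so the task is to establish the matching upper bound $\pi(S^k(K_2^{\{1,2\}})) \leq 1 + 2^{-(k+1)}$. Write $H_k := S^k(K_2^{\{1,2\}})$; its edge types are $\{k+1,k+2\}$, and the forbidden configuration is two $(k+1)$-edges sharing exactly $k$ vertices together with their $(k+2)$-union.

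My plan is to induct on $k$, with base case $k=1$ supplied by Example~2. For the inductive step, let $G$ be an $H_k$-free graph on $n$ vertices. Since taking vertex links commutes with suspension---if $G$ is $S(H')$-free then every vertex-link $G_v$ is $H'$-free---each $G_v$ is $H_{k-1}$-free, so by induction $h_{n-1}(G_v)\leq 1+2^{-k}+o(1)$. The standard double count $\sum_v h_{n-1}(G_v)=n\cdot h_n(G)$ then yields
\begin{equation*}
  h_n(G)\;=\;\frac{1}{n}\sum_v h_{n-1}(G_v)\;\leq\;1+2^{-k}+o(1),
\end{equation*}
which is exactly a factor of two short on the nontrivial part of the desired bound.

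The heart of the argument is to explain why the links cannot all simultaneously saturate the inductive bound. I would try to prove a stability version of the inductive extremal result: any $H_{k-1}$-free hypergraph on $m$ vertices whose Lubell value is within $o(1)$ of $1+2^{-k}$ is structurally close to the bipartition construction of \eqref{eq:sk122}---small edges inside a pair of nearly-equal color classes, large edges crossing. For an $H_k$-free graph $G$ near the extremum, almost every link $G_v$ would then inherit a near-bipartition $A_v\sqcup B_v$ of $V(G)\setminus\{v\}$, and the next step is an alignment argument showing that these local bipartitions can be patched into one global bipartition $A\sqcup B$ of $V(G)$, up to a vanishing fraction of swapped vertices. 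Once the global partition is in hand, the extremal calculation that produces \eqref{eq:sk122} shows that no $H_k$-free graph obeying it can exceed Lubell value $1+2^{-(k+1)}$, and the supersaturation Lemma of Section~3 rules out near-bipartite graphs that are not literally of this form.

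The main obstacle is the stability-and-alignment step. Formulating the right stability statement for $\{k+1,k+2\}$-hypergraphs is already nontrivial because the edge mass is split across two levels, and standard removal-type machinery does not transfer cleanly. It will probably also be necessary to strengthen the crude cherry count that yields the universal but weak bound $\pi(H_k)\leq 5/4$: namely
\begin{equation*}
  \bar{n}_l \;\geq\; \frac{1}{\binom{k+2}{2}}\sum_S\binom{|V_S|}{2},
\end{equation*}
where $|V_S|$ is the number of $(k+1)$-edges extending a $k$-subset $S$, combined with Cauchy--Schwarz applied to $\sum_S|V_S|=(k+1)n_s$. This bound is loose precisely because in any near-extremal example $|V_S|$ is wildly non-uniform---roughly $n/2$ when $S$ lies in a single color class and $0$ on crossing $S$. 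Replacing Cauchy--Schwarz by a convexity estimate restricted to the heavy part of the shadow, and then feeding the result into the stability framework above, is where I would expect the missing factor of two to come from; executing this cleanly is, to my mind, the real difficulty.
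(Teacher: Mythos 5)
The statement you are addressing is an open conjecture in the paper, not a theorem: the authors only supply the lower-bound construction giving $\pi(S^k(K_2^{\{1,2\}}))\geq 1+2^{-(k+1)}$ (with even that computation omitted), and they prove the case $k=1$ in Example~2. Your proposal likewise does not constitute a proof. The only parts you actually establish are the lower bound (by citation) and the link/suspension inequality $\pi(S(H'))\leq \pi(H')$, which, iterated, gives the upper bound $1+2^{-k}$ for $S^k$ — exactly the "factor of two short" you acknowledge. Everything that would close that factor of two (the stability theorem for $H_{k-1}$-free $\{k,k+1\}$-graphs, the alignment of the local bipartitions into a global one, and the strengthened shadow/convexity estimate) is announced as a plan and explicitly flagged as the real difficulty; none of it is carried out. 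That missing step is the entire content of the conjecture.

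Beyond incompleteness, the stability step as you formulate it is aimed at the wrong threshold and would not engage even if proved. To rule out $h_n(G)>1+2^{-(k+1)}+\epsilon$ for an $H_k$-free $G$, the identity $h_n(G)=\frac{1}{n}\sum_v h_{n-1}(G_v)$ only tells you that the links average just above $1+2^{-(k+1)}$; nothing forces almost every link (or indeed any link) to come within $o(1)$ of the $H_{k-1}$-extremal value $1+2^{-k}$. So a stability description of $H_{k-1}$-free graphs near $1+2^{-k}$ gives no information about the links you actually have, and the "links inherit near-bipartitions, then patch them globally" pipeline does not start. What would be needed is structural information about $H_{k-1}$-free links at the much lower density $1+2^{-(k+1)}$, or a genuinely different mechanism (note that even the paper's own Theorem~\ref{t8} on iterated suspensions only addresses the degenerate limit $|R|-1$, not this constant). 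As it stands, your write-up is a research program for an open problem, with the decisive ideas absent.
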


\section{Supersaturation and Blowing-up}

Supersaturation Lemma \cite{ErSi} is an important tool for uniform hypergraphs.
There is a natural generalization of the supersaturation lemma
and blowing-up in non-uniform hypergraphs.

\begin{lemma} {\bf (Supersaturation)} For any hypergraph $H$
and $a>0$ there are $b$, $n_0>0$ so that if $G$ is a hypergraph on $n>n_0$
vertices with $R(G)=R(H)$ and $h_n(G)>\pi(H)+a$ then
$G$ contains at least $b{n\choose v(H)}$ copies of $H$.  
\end{lemma}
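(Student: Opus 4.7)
The plan is to imitate the classical averaging / supersaturation argument but use the Lubell function instead of the edge-density, leveraging the identity already established in the proof of Theorem 1: if $S$ is a uniformly random $\ell$-subset of $V(G)$ then $\mathbb{E}[h_\ell(G[S])] = h_n(G)$.

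First I would fix $\ell \geq v(H)$ large enough (a constant depending only on $H$ and $a$) so that $\pi_\ell(H) < \pi(H) + a/2$; this is possible because $\pi_n(H)$ is a decreasing sequence converging to $\pi(H)$. Then, for a uniformly random $\ell$-subset $S$ of $V(G)$, the induced subgraph $G[S]$ is an $R(H)$-graph, so $h_\ell(G[S]) \leq |R(H)|$ deterministically (the maximum value attained by the complete $R(H)$-graph). Moreover, whenever $G[S]$ happens to be $H$-free, we have $h_\ell(G[S]) \leq \pi_\ell(H) < \pi(H) + a/2$ by the definition of $\pi_\ell$.

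Next I would let $p = p(n)$ denote the probability that $G[S]$ contains a copy of $H$. Combining the two bounds above with the expectation identity yields
\[
\pi(H) + a < h_n(G) = \mathbb{E}[h_\ell(G[S])] \leq p \cdot |R(H)| + (1-p)\left(\pi(H) + a/2\right),
\]
so $p \geq \frac{a/2}{|R(H)| - \pi(H) - a/2} =: p_0 > 0$, independently of $n$. Thus at least $p_0 \binom{n}{\ell}$ of the $\ell$-subsets of $V(G)$ induce a subgraph containing a copy of $H$. To convert this into a count of copies of $H$ itself, I would use a standard double-counting step: each copy of $H$ lies in exactly $\binom{n - v(H)}{\ell - v(H)}$ subsets of size $\ell$, so the number of copies of $H$ in $G$ is at least
\[
\frac{p_0 \binom{n}{\ell}}{\binom{n - v(H)}{\ell - v(H)}} = \frac{p_0}{\binom{\ell}{v(H)}} \binom{n}{v(H)},
\]
and we set $b := p_0 / \binom{\ell}{v(H)}$, which is a positive constant depending only on $H$ and $a$.

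The routine obstacle is simply selecting $\ell$ and verifying the bound $h_\ell(G[S]) \leq |R(H)|$ uniformly; the genuine conceptual issue, which uniform supersaturation does not face, is that the Lubell function aggregates edges of several different cardinalities, so a naive density argument on a single level would fail. The expectation identity $\mathbb{E}[h_\ell(G[S])] = h_n(G)$, which respects all levels simultaneously, is precisely what lets the argument go through unchanged; once that is in hand, no further level-by-level analysis is needed.
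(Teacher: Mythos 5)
Your proof is correct and uses essentially the same averaging strategy as the paper: fix a constant $\ell$ with $\pi_\ell(H)$ close to $\pi(H)$, invoke the identity $\E[h_\ell(G[S])]=h_n(G)$, conclude that a constant fraction of $\ell$-subsets induce a subgraph containing $H$, and finish by double-counting. The paper phrases the middle step as a counting-by-contradiction (at least $\frac{a}{2r}\binom{n}{m}$ of the $m$-sets have Lubell value above $\pi(H)+a/2$) rather than as a probability bound, but the content is the same.
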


{\bf Proof:}  Let $R:=R(H)$ and $r:=|R|$.
Since $\displaystyle \pi(H)=\lim_{n\to\infty}\pi_n(H)$, there is an $n_0$ so that for $m\geq n_0$, $\pi_m(H)\leq \pi(H)+\frac{a}{2}$.
For any $n_0\leq m\leq n$, there must be at least $\frac{a}{2r}{n\choose m}$ $m$-sets $M\subset V(G)$ inducing 
a $R$-graph $G[M]$ with $h(G[M])>\pi(H)+\frac{a}{2}$.
Otherwise, we have 
\[\sum_{M}h_m(G[M])\leq \left(\pi(H)+\frac{a}{2}\right){n\choose m} + \frac{a}{2r}{n\choose m}r=(\pi(H)+a){n\choose m}.\]
But we also have
\begin{align*}
\sum_Mh_m(G[M]) &=\sum_M \sum_{\stackrel{F\in E(G)}{F\subseteq M}}\frac{1}{{m\choose |F|}} \\
                &=\sum_{F\in E(G)}\sum_{M\supseteq F}\frac{1}{{m\choose |F|}}  \\
                &= \sum_{F\in E(G)} \frac{{n-|F| \choose m-|F|}}{{m\choose |F|}}  \\
                &=\sum_{F\in E(G)} \frac{{n\choose m}}{{n\choose |F|}}\\
                &= {n\choose m}h_n(G).
\end{align*}
This is a contradiction to the assumption that $h_n(G)>\pi(H)+a$.
By the choice of $m$, each of these $m$-sets contains a copy of $H$, so the number of copies of $H$ in $G$ is at least 
$\frac{a}{2r}{n\choose m}/{{n-v(H)\choose m-v(H)}}=b {n\choose v(H)}$ where $b:=\frac{a}{2r}{m\choose v(H)}^{-1}$. \hfill $\square$

Supersaturation can be used to show that ``blowing up'' does not change the Tur\'an density $\pi(H)$ just like in the uniform cases.

\begin{definition}
For any hypergraph $H_n$ and  positive integers $s_1,s_2,\ldots, s_n$, the {\em blowup} of $H$ is a new hypergraph $(V,E)$, denoted by 
$H_n(s_1,s_2,\ldots, s_n)$, satisfying
\begin{enumerate}
\item $V:=\sqcup_{i=1}^n V_i$, where $|V_i|=s_i$.
\item $E=\cup_{F\in \E(H)} \prod_{i\in F} V_i$.
\end{enumerate}
When $s_1=s_2=\cdots=s_n=s$, we simply write it as $H(s)$.
\end{definition}

Consider the following simple example.
Take $H$ to be the hypergraph with vertex set $[3]$ and edge set $E=\{\{1,2\}, \{1,2,3\}\}$, a chain.
Consider the blow-ups $H(2,1,1)$ and $H(1,1,2)$ illustrated below.

\begin{center}
\begin{tikzpicture}[vertex/.style={circle, draw=black, fill=white}, scale=.75]
\shadedraw[left color=black!15!white, right color=black!50!white, draw=black!10!white] (0,1)--(2,0)--(4,0)--cycle;
\node at (0,1) [vertex] (v1) [label=above:{1}] {};
\node at (2,0) [vertex] (v2) [label=below:{2}] {};
\node at (4,0) [vertex] (v3) [label=right:{3}] {};
\draw [draw=black, ultra thick] (v1)--(v2);
\node at (2,-2) (empty) {$H$};
\end{tikzpicture}
\begin{tikzpicture}[vertex/.style={circle, draw=black, fill=white}, scale=.75]
\shadedraw[left color=black!15!white, right color=black!50!white, draw=white] (0,1)--(2,0)--(4,0)--cycle;
\shadedraw[left color=black!15!white, right color=black!50!white, draw=white] (0,-1)--(2,0)--(4,0)--cycle;
\node at (0,1) [vertex] (v11) [label=above:{$v_{1,1}$}] {};
\node at (0,-1) [vertex] (v12) [label=below:{$v_{1,2}$}] {};
\node at (2,0) [vertex] (v2) [label=left:{$v_2$}] {};
\node at (4,0) [vertex] (v3) [label=right:{$v_3$}] {};
\draw [draw=black, ultra thick] (v11)--(v2);
\draw [draw=black, ultra thick] (v12)--(v2);
\node at (2,-2) {$H(2,1,1)$};
\end{tikzpicture}
\begin{tikzpicture}[vertex/.style={circle, draw=black, fill=white}, scale=.75]
\shadedraw[left color=black!15!white, right color=black!50!white, draw=white] (0,0)--(2,0)--(4,1)--cycle;
\shadedraw[left color=black!15!white, right color=black!50!white, draw=white] (0,0)--(2,0)--(4,-1)--cycle;
\node at (0,0) [vertex] (v1) [label=above:{$v_{1}$}] {};
\node at (2,0) [vertex] (v2) [label=right:{$v_{2}$}] {};
\node at (4,1) [vertex] (v31) [label=above:{$v_{3,1}$}] {};
\node at (4,-1) [vertex] (v32) [label=below:{$v_{3,2}$}] {};
\draw [draw=black, ultra thick] (v1)--(v2);
\node at (2,-2) {$H(1,1,2)$};
\end{tikzpicture}
\end{center}

\noindent In the blow-up $H(2,1,1)$ vertex 1 splits into vertices $v_{1,1}$ and $v_{1,2}$; vertex 2 becomes $v_2$ and vertex 3 becomes $v_3$.
In the blow-up $H(1,1,2)$ vertex 3 splits into vertices $v_{3,1}$ and $v_{3,2}$; vertex 1 becomes $v_1$ and vertex 2 becomes $v_2$.

\begin{theorem} {\bf (Blowing up)} \label{blowup}
Let $H$ be any hypergraph and let $s\geq 2$.
Then $\pi(H(s))=\pi(H)$.
\end{theorem}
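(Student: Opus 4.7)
The easy direction is $\pi(H(s)) \geq \pi(H)$: since $H$ embeds in $H(s)$ by picking one vertex from each blown-up class, every $H$-free $R(H)$-graph is automatically $H(s)$-free, so $\pi_n(H) \leq \pi_n(H(s))$ for every $n$; note that $R(H(s)) = R(H)$, so both Tur\'an densities are taken over graphs of the same edge type.

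For the reverse inequality, suppose for contradiction that $\pi(H(s)) > \pi(H) + a$ for some $a > 0$. Then for arbitrarily large $n$ there is an $H(s)$-free $R(H)$-graph $G = G_n$ on $n$ vertices with $h_n(G) > \pi(H) + a/2$. The Supersaturation Lemma applied to $H$ with parameter $a/2$ produces a constant $b > 0$ and some $n_0$ such that for all $n > n_0$, $G$ contains at least $b\binom{n}{k}$ copies of $H$, where $k := v(H)$; equivalently, there are at least $b' n^k$ labeled embeddings $f \colon V(H) \to V(G)$ for some $b' > 0$.

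The plan is now to extract a copy of $H(s)$ from this supply of $H$-copies. Write $V(H) = \{u_1, \ldots, u_k\}$ and color $V(G)$ uniformly at random with $k$ colors to get a partition $V(G) = V_1 \sqcup \cdots \sqcup V_k$. Any labeled embedding $f$ satisfies $f(u_i) \in V_i$ for all $i$ with probability $k^{-k}$, so by averaging we may fix a coloring for which at least $b' n^k / k^k$ embeddings are ``partition-respecting.'' Define the auxiliary $k$-partite $k$-uniform hypergraph $\A$ on $V_1 \cup \cdots \cup V_k$ whose edges are exactly those tuples $(v_1, \ldots, v_k) \in V_1 \times \cdots \times V_k$ for which $u_i \mapsto v_i$ is an embedding of $H$ into $G$. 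Then $\A$ has positive edge density: $|E(\A)| = \Omega(|V_1| \cdots |V_k|)$.

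The finish is the classical Erd\H{o}s theorem on $k$-partite $k$-uniform hypergraphs: any such hypergraph with positive edge density contains a complete $k$-partite $K^{(k)}_{s, \ldots, s}$ once the part sizes are sufficiently large. This yields $A_i \subseteq V_i$ with $|A_i| = s$ such that every transversal $(v_1, \ldots, v_k) \in A_1 \times \cdots \times A_k$ corresponds to a copy of $H$ in $G$. Unpacking, for every edge $F = \{u_{i_1}, \ldots, u_{i_j}\}$ of $H$ and every choice of vertices $v_{i_\ell} \in A_{i_\ell}$, the set $\{v_{i_1}, \ldots, v_{i_j}\}$ lies in $E(G)$. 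This is exactly a copy of $H(s)$ on $A_1 \cup \cdots \cup A_k$, contradicting $H(s)$-freeness of $G$ and forcing $\pi(H(s)) \leq \pi(H)$. The principal technical ingredient is the Erd\H{o}s theorem for complete $k$-partite hypergraphs; the rest is a transcription of the standard uniform blow-up argument, with the non-uniformity of $H$ intervening only through the decoration of the $k$-uniform auxiliary $\A$ by labeled $H$-copies.
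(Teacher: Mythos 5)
Your proof is correct, and it follows essentially the same route as the paper: apply supersaturation to get a positive density of $H$-copies, encode them in an auxiliary $v(H)$-uniform hypergraph, use the Erd\H{o}s theorem (equivalently, $\pi(K^{(k)}_k(s))=0$) to extract a complete $k$-partite piece, and unpack it into a copy of $H(s)$. The only cosmetic difference is in the bookkeeping of labelings: you fix an orientation up front via a random balanced partition of $V(G)$ and build the auxiliary hypergraph as a $k$-partite one, whereas the paper first finds an unlabeled $K^{(k)}_k(S)$ in the auxiliary hypergraph and then pigeonholes its edges over the $v(H)!$ possible orientations before shrinking to $K^{(k)}_k(s)$; these two ways of absorbing the $k!$ factor are interchangeable and the two arguments are otherwise identical.
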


{\bf Proof:} 
Let $R:=R(H)$.
By the supersaturation lemma, for any $a>0$ there is a $b>0$ and an $n_0$ so that any $R$-graph $G$ on $n\geq n_0$ vertices with $h_n(G)>\pi(H)+a$
contains at least $b{n \choose v(H)}$ copies of $H$.  
Consider an auxiliary $v(H)$-uniform hypergraph $U$ on the same vertex set as $G$ where edges of $U$ correspond to copies of $H$ in $G$.
For any $S>0$, there is a copy of $K=K^{v(H)}_{v(H)}(S)$ in $U$.
This follows from the fact that $\pi(K^{v(H)}_{v(H)}(S))=0$ since it is $v(H)$-partite, and $h_{n}(U)=b>0$.
Now color each edge of $K$ by one of $v(H)!$ colors, each color corresponding to one of $v(H)!$ possible orders the vertices of $H$ are mapped to the parts of $K$.
The pigeon-hole principle gives us that one of the color classes contains at least $S^{v(H)}/v(H)!$ edges.
For large enough $S$ there is a monochromatic copy of $K^{v(H)}_{v(H)}(s)$, which gives a copy of $H(s)$ in $G$. \hfill $\square$

\begin{corollary}{\bf (Squeeze Theorem)}
Let $H$ be any hypergraph.
If there exists a hypergraph $H^{\prime}$ and integer $s\geq 2$ such that $H^{\prime}\subseteq H\subseteq H^{\prime}(s)$ then $\pi(H)=\pi(H^{\prime})$.
\end{corollary}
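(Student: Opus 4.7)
The plan is to sandwich $\pi(H)$ between $\pi(H')$ and $\pi(H'(s))$ by exploiting the subgraph relation in the hypothesis, and then invoke the Blowing-up Theorem to collapse the two outer values.

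First I would verify that $R(H)=R(H')$. Since $H'\subseteq H$ we have $R(H')\subseteq R(H)$, and since $H\subseteq H'(s)$ we have $R(H)\subseteq R(H'(s))=R(H')$, where the last equality holds because a blow-up does not alter the set of edge cardinalities. So all three hypergraphs in the chain share the same edge-type set $R$, which means the Tur\'an densities $\pi(H')$, $\pi(H)$, $\pi(H'(s))$ are all taken with respect to $R$-graphs and are directly comparable.

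Next I would note that the map $H_1\subseteq H_2$ implies ``$H_2$-free $\Leftarrow$ $H_1$-free'' (contrapositive: any copy of $H_2$ contains a copy of $H_1$), and since every extremal hypergraph competing for $\pi_n(H_1)$ is also eligible for $\pi_n(H_2)$, one gets $\pi_n(H_1)\leq\pi_n(H_2)$ for every $n$, hence $\pi(H_1)\leq\pi(H_2)$. Applying this monotonicity to the two containments in the hypothesis yields
\begin{equation*}
  \pi(H')\;\leq\;\pi(H)\;\leq\;\pi(H'(s)).
\end{equation*}

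Finally, by Theorem \ref{blowup} (Blowing up) we have $\pi(H'(s))=\pi(H')$, so both outer values coincide and the sandwich forces $\pi(H)=\pi(H')$. There is no real obstacle here; the corollary is essentially a one-line deduction once one has Theorem \ref{blowup} in hand, and the only thing to be mildly careful about is confirming that the edge-type sets match so that we are comparing Tur\'an densities defined over the same class of $R$-graphs.
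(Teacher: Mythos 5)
Your proof is correct and follows essentially the same route as the paper's: monotonicity of $\pi$ under the subgraph relation combined with the Blowing-up Theorem to collapse $\pi(H')$ and $\pi(H'(s))$. The only addition is your explicit check that $R(H)=R(H')$, which the paper leaves implicit but which is a reasonable point of care.
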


{\bf Proof:} 
One needs only observe that for any hypergraphs $H_{1}\subseteq H_{2}\subseteq H_{3}$ that $\pi(H_{1})\leq \pi(H_{2})\leq \pi(H_{3})$.
If $H_{3}=H_{1}(s)$ for some $s\geq 2$ then $\pi(H_{1})=\pi(H_{3})$ by the previous theorem. \hfill $\square$

\section{Tur\'an Densities of $\{1,2\}$-hypergraphs}

In this section we will determine the Tur\'an density for any hypergraph $H$ with $R(H)=\{1,2\}$.
We begin with the following more general result.

\begin{theorem}
Let $H=H^{1}\cup H^{k}$ be a hypergraph with $R(H)=\{1,k\}$ and $E(H^{1})=V(H^{k})$.
Then \[\pi(H) = \begin{cases} 1+\pi(H^{k}) & \text{if } \pi(H^{k})\geq 1-\frac{1}{k}; \\
                              1+\left(\frac{1}{k(1-\pi(H^{k}))}\right)^{1/(k-1)}\left(1-\frac{1}{k}\right) & \text{otherwise.} \end{cases}\]
\end{theorem}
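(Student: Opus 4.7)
The plan is to analyze an $H$-free $\{1,k\}$-graph $G$ on $n$ vertices via the set $A := \{v\in V(G) : \{v\} \in E(G)\}$ of vertices carrying a singleton. The hypothesis $E(H^1)=V(H^k)$ means that every vertex in a copy of $H^k$ carries a singleton in $H$. Consequently, any copy of $H^k$ inside $G^k$ whose vertex set lies entirely in $A$ would, together with the singletons on those vertices, form a copy of $H$ in $G$. Hence $G^k[A]$ must be $H^k$-free, while $k$-edges meeting the complement of $A$ are harmless. This gives
\[ |E(G^k)| \;\leq\; \bigl(\tbinom{n}{k}-\tbinom{|A|}{k}\bigr) + \ex(|A|, H^k). \]

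\textbf{Upper bound.} Write $\alpha = |A|/n$. Using the Katona--Nemetz--Simonovits theorem applied to $H^k$, for every $\epsilon>0$ and sufficiently large $|A|$ we have $\ex(|A|, H^k) \leq (\pi(H^k)+\epsilon)\binom{|A|}{k}$; the cases with $|A|$ bounded contribute only $o(1)$ and are harmless. Dividing the edge bound by $\binom{n}{k}$ and using $\binom{|A|}{k}/\binom{n}{k} = \alpha^k + o(1)$ yields
\[ h_n(G) \;\leq\; 1 + \alpha - (1-\pi(H^k))\alpha^k + o(1). \]
The plan is then to maximize $f(\alpha)=1+\alpha-(1-\pi(H^k))\alpha^k$ over $[0,1]$. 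The derivative $f'(\alpha)=1-k(1-\pi(H^k))\alpha^{k-1}$ has its unique root at $\alpha^* = \bigl(k(1-\pi(H^k))\bigr)^{-1/(k-1)}$. Observe that $\alpha^*\leq 1$ iff $\pi(H^k)\leq 1-1/k$. When $\pi(H^k)\geq 1-1/k$, $f$ is non-decreasing on $[0,1]$ and attains its maximum $f(1)=1+\pi(H^k)$; otherwise the interior critical point gives $f(\alpha^*)=1+\alpha^*(1-1/k)$, which is exactly the second case of the stated formula (using $(1-\pi(H^k))(\alpha^*)^k = \alpha^*/k$).

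\textbf{Lower bound.} Matching constructions realize each case. If $\pi(H^k)\geq 1-1/k$, take $A=[n]$, all singletons, and a $k$-uniform $H^k$-free graph on $[n]$ of density $\pi(H^k)-o(1)$ (which exists by definition of $\pi(H^k)$); this gives $h_n(G) = 1+\pi(H^k)-o(1)$. If $\pi(H^k)<1-1/k$, let $A\subseteq [n]$ with $|A|=\lfloor \alpha^* n\rfloor$, take all singletons inside $A$, include every $k$-set of $[n]$ not contained in $A$, and fill $\binom{A}{k}$ with an $H^k$-free extremal graph of density $\pi(H^k)-o(1)$. Since every copy of $H^k$ would have to lie inside $A$ but the structure on $A$ is $H^k$-free, $G$ is $H$-free, and a direct calculation reproduces $1+\alpha^*(1-1/k)-o(1)$.

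\textbf{Main obstacle.} The nontrivial part is not the optimization (routine one-variable calculus) but making the approximations uniform: namely, ensuring that the inequality $\ex(|A|,H^k)\leq (\pi(H^k)+\epsilon)\binom{|A|}{k}$ and the asymptotic $\binom{|A|}{k}/\binom{n}{k}\to \alpha^k$ can be applied simultaneously over all values of $|A|$, with the regime of very small $|A|$ absorbed into the $o(1)$ error. Once the upper bound is reduced to optimizing the single-variable function $f(\alpha)$, the split of the formula into two cases at $\pi(H^k)=1-1/k$ arises naturally from whether the critical point $\alpha^*$ lies inside $[0,1]$ or not.
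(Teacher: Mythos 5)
Your proposal is correct and follows essentially the same line as the paper's proof: partition the vertices into those carrying a singleton and the rest, observe that the $k$-uniform part restricted to the singleton-bearing set must be $H^k$-free, reduce to optimizing $f(\alpha)=1+\alpha-(1-\pi(H^k))\alpha^k$ over $[0,1]$, and match with the analogous blown-up construction. The only cosmetic difference is that the paper passes to a convergent subsequence of $|A|/n$ and handles the small-$|A|$ regime by splitting at $|A|\le\sqrt n$, while you fold that into an $o(1)$ term -- both are fine.
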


{\bf Proof:}
For each $n\in \mathbb{N}$, let $G_{n}$ be any $H$-free graph $n$ vertices with $h_{n}(G_{n})=\pi_{n}(H)$.
Partition the vertices of $G_{n}$ into $X_{n}=\{v\in V(G_{n}):\{v\}\in E(G)\}$ and $\bar{X_{n}}$ containing everything else.
Say that $|X_{n}|=x_{n}n$ and $|\bar{X_{n}}|=(1-x_{n})n$ for some $x_{n}\in [0,1]$.
Since $(x_{n})$ is a sequence in $[0,1]$ it has a convergent subsequence.
Consider $(x_{n})$ to be the convergent subsequence, and say that $x_{n}\to x\in [0,1]$.
With the benefit of hindsight, we know that $x>0$, however, for the upper bound portion of this proof we will not assume this knowledge.

Since there is no copy of $H$ in $G_{n}$, it follows that $G_{n}[X_{n}]$ contains no copy of $H^{k}$.
We have that
\begin{align*}
\pi(H) &=\lim_{n\to\infty} h_{n}(G_{n}) \\
             &=\lim_{n\to\infty} \sum_{F\in H^{1}}\frac{1}{\binom{n}{1}} + \sum_{F\in H^{k}} \frac{1}{\binom{n}{k}} \\
             &\leq \lim_{n\to\infty} \frac{x_{n}n}{\binom{n}{1}} + \frac{\binom{n}{k}-(1-\pi_{x_{n}n}(H^{k}))\binom{x_{n}n}{k}}{\binom{n}{k}} \\
	     &=\lim_{n\to\infty} 1 + x_{n} - (1-\pi_{x_{n}n}(H^{k}))\frac{\binom{x_{n}n}{k}}{\binom{n}{k}} \\
	     &\leq \lim_{n\to\infty} \begin{cases} 1+\frac{1}{\sqrt{n}} & \text{if } x_{n}n \leq \sqrt{n}, \\
	                                            1+ x_{n} - (1-\pi_{x_{n}n}(H^{k}))x_{n}^{k} & \text{if } x_{n}n>\sqrt{n}, \end{cases} \\
	     &\leq \max\{1, 1+x-(1-\pi(H^{k}))x^{k}\}.
\end{align*}
Let $f(x)=1+x-(1-\pi(H^{k}))x^{k}$ and then note that \[\pi(H)= \lim_{n\to\infty} h_{n}(G) \leq \max_{x\in [0,1]} f(x).\]
An easy calculus exercise shows that $f^{\prime\prime}(x)<0$ for all
$x>0$, and $f^{\prime}(x)=0$ when 
$x=\left(\frac{1}{k(1-\pi(H^{k}))}\right)^{\frac{1}{k-1}}.$
If $\frac{1}{k(1-\pi(H^{k}))}\geq 1$ then $f^{\prime}(x)>0$ when $x\in [0,1)$ and hence $f(x)$ is maximized when $x=1$.
Note that $f(1)=1+\pi(H^{k})$.
If, on the other hand, $\frac{1}{k(1-\pi(H^{k}))}< 1$ it follows that $f(x)$ is maximized at $x=\left(\frac{1}{k(1-\pi(H^{k}))}\right)^{1/(k-1)}$.
Together, this gives us
\[\pi(H) \leq \begin{cases} 1+\pi(H^{k}) & \text{if } \pi(H^{k})\geq 1-\frac{1}{k}; \\
                            1+\left(\frac{1}{k(1-\pi(H^{k}))}\right)^{1/(k-1)}\left(1-\frac{1}{k}\right) & \text{otherwise}. \end{cases}\]
To get equality, take $x$ that maximizes $f(x)$ as above.
For any $n\in \mathbb{N}$ (thinking of $n\to \infty$) partition $[n]$ into two sets $X$ and $\bar{X}$ with $|X|=xn$ and $|\bar{X}|=(1-x)n$.
Let $E(G^{1})=\{\{v\}:v\in X\}$ and let $g^{k}$ be a $k$-uniform graph on $xn$ vertices attaining $|E(g^{k})|=\text{ex}(xn,H^{k})$ and $g^{k}$ is $H^{k}$-free.
Then \[E(G^{k})=\{F\in \binom{[n]}{k}:\text{either } F\in E(g^{k}) \text{ or } F\cap \bar{X}\neq \emptyset\}.\]
Then $G=G^{1}\cup G^{k}$ is $H$-free and (by choice of $x$) we have that $\displaystyle \lim_{n\to\infty}h_{n}(G)$ attains the upper bound of $\pi(H)$.  \hfill $\square$

Let us now return to the task of determining $\pi(H)$ when $H=H^{1}\cup H^{2}$.

\begin{prop}
Let $H=H^{1}\cup H^{2}$.
If $H^{2}$ is not bipartite, then \[\pi(H)=1+\pi(H^{2})= 1+\left(1-\frac{1}{\chi(H^{2})-1}\right)=2-\frac{1}{\chi(H^{2})-1}.\]
\end{prop}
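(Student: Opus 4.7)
The plan is to derive this as a direct corollary of the preceding theorem combined with the Erd\H{o}s--Stone--Simonovits theorem. The key observation is that when $k=2$, the threshold $1 - \tfrac{1}{k}$ separating the two cases of the previous theorem equals $\tfrac{1}{2}$, and any non-bipartite graph comfortably exceeds this threshold.

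First, I would verify that the hypothesis of the \emph{first} case of the preceding theorem is satisfied. Since $H^{2}$ is not bipartite, we have $\chi(H^{2}) \geq 3$. By the classical Erd\H{o}s--Stone--Simonovits theorem (cited in the introduction), the Tur\'an density of $H^{2}$ as an ordinary graph is
\[
\pi(H^{2}) \;=\; 1 - \frac{1}{\chi(H^{2})-1} \;\geq\; 1 - \frac{1}{2} \;=\; \frac{1}{2} \;=\; 1 - \frac{1}{k},
\]
where $k = 2$. This is exactly the cutoff condition that selects the top branch of the piecewise formula from the preceding theorem.

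Second, I would apply the preceding theorem directly. Its first case yields
\[
\pi(H) \;=\; 1 + \pi(H^{2}) \;=\; 1 + \left(1 - \frac{1}{\chi(H^{2})-1}\right) \;=\; 2 - \frac{1}{\chi(H^{2})-1},
\]
which is exactly the stated conclusion.

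There is essentially no real obstacle here: the proposition is a clean application of the previous theorem, with the non-bipartiteness hypothesis serving only to guarantee that $\pi(H^{2}) \geq \tfrac{1}{2}$ so that the simpler formula branch applies. If one wanted to make the argument fully self-contained, one could verify independently that the extremal construction (taking $X = [n]$ in the extremal construction of the preceding theorem, so that every vertex carries a singleton edge, and placing an extremal $H^{2}$-free graph on $[n]$) indeed achieves the bound $1 + \pi(H^{2})$; this mirrors the construction given at the end of the previous theorem's proof specialized to $x = 1$.
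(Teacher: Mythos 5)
Your proposal applies the preceding theorem ``directly'' to $H$, but that theorem carries a structural hypothesis you never verify: its statement requires $E(H^{1}) = V(H^{k})$, i.e., \emph{every} vertex of the 2-level hypergraph must carry a singleton edge. The proposition you are proving places no such constraint on $H$; a general $\{1,2\}$-graph with non-bipartite $H^{2}$ need not satisfy it. You correctly check the threshold condition $\pi(H^{2}) \geq 1 - \tfrac{1}{k}$ via Erd\H{o}s--Stone--Simonovits, but that is a condition selecting which \emph{branch} of the formula applies once the theorem is in force, not the only hypothesis required to invoke the theorem at all.

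The paper bridges this gap in two separate steps, and both are needed. For the upper bound it augments $H$ to $H'$ by adding every missing $1$-edge (so that $H'$ does satisfy $E(H'^{1}) = V(H'^{2})$), observes that $H \subseteq H'$ gives $\pi(H) \leq \pi(H')$, and only then applies the preceding theorem to $H'$. For the lower bound it gives exactly the construction you mention in your final paragraph --- all singletons plus an extremal $H^{2}$-free graph on the $2$-level --- but this is not an optional consistency check: once the theorem is applied to $H'$ rather than to $H$, it yields only $\pi(H) \leq 1 + \pi(H^{2})$, and the matching lower bound must be supplied separately. (Alternatively the lower bound also follows from Proposition~\ref{p1}, item~2, applied to the subgraph $H^{2} \subset H$, giving $\pi(H) \geq \pi(H^{2}) + |R(H)| - |R(H^{2})| = 1 + \pi(H^{2})$.) With the ``add all $1$-edges and use monotonicity'' step made explicit and the construction promoted from optional to essential, your outline becomes the paper's proof.
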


{\bf Proof:}
First, $\pi(H)\geq 1+\pi(H^{2})$ since one can construct an $H$-free graph $G_{n}$ by letting
\[E(G_{n})=\{\{v\}:v\in V(G_{n})\}\cup E(G^{\prime}_{n})\]
where $G^{\prime}_{n}$ attains $h_{n}(G^{\prime}_{n})=\pi_{n}(H^{2})$ and $G^{\prime}_{n}$ is $H^{2}$-free.
Then 
\[\pi(H)\geq \lim_{n\to\infty} h_{n}(G_{n}) = \lim_{n\to\infty} 1+\pi_{n}(H^{2}) = 1+\pi(H^{2}).\]

To get the upper-bound, first add every missing $1$-edge into $H$, call the new graph $H^{\prime}$.  
Note that $\pi(H)\leq \pi(H^{\prime})$.
Note that we didn't change the edge set $H^{2}$.
The Erd\H{o}s-Stone-Simonovits theorem states that if $H^{2}$ is not bipartite, then $\pi(H^{2})=1-\frac{1}{\chi(H^{2})-1}$.
Also, if $H^{2}$ is not bipartite, then $\chi(H^{2})\geq 3$.
With the added vertices, taking $k=2$, we apply the previous theorem.
Since 
\[\pi(H^{2})=1-\frac{1}{\chi(H^{2})-1}\geq 1-\frac{1}{2}\] we may conclude that $\pi(H)\leq \pi(H^{\prime})=1+\pi(H^{2})$. \hfill $\square$

It remains to investigate the cases when $H^{2}$ is bipartite.

\begin{prop}
Let $H=H^{1}\cup H^{2}$.
If $H^{2}$ is bipartite and $K_{2}^{\{1,2\}}\subseteq H$ then $\pi(H)=\frac{5}{4}$.
\end{prop}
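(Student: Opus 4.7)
The plan is to apply the Squeeze Theorem (Corollary~1) with $H' = K_2^{\{1,2\}}$. The hypothesis $K_2^{\{1,2\}} \subseteq H$ already supplies the lower half of the squeeze, so the task reduces to exhibiting an integer $s \geq 2$ for which $H \subseteq K_2^{\{1,2\}}(s)$.

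Unpacking the blow-up definition, $K_2^{\{1,2\}}(s)$ has vertex classes $V_1, V_2$ of size $s$, with edge set consisting of a singleton at every vertex of $V_1 \cup V_2$ (from blowing up the $1$-edges $\{1\}$ and $\{2\}$) together with the complete bipartite $2$-graph between $V_1$ and $V_2$ (from blowing up $\{1,2\}$). Since $H^2$ is bipartite, fix a bipartition $V(H^2) = A \cup B$ with $A \cap B = \emptyset$ in which every $2$-edge of $H^2$ has one endpoint in each part.

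Next, choose $s \geq \max(|V(H)|, 2)$ and define an injection $f: V(H) \to V_1 \cup V_2$ sending $A$ into $V_1$, $B$ into $V_2$, and placing the remaining vertices of $V(H) \setminus V(H^2)$ into the leftover slots arbitrarily (possible since $|V_1|+|V_2|=2s \geq |V(H)|$). Every $1$-edge $\{v\} \in E(H^1)$ then maps to the singleton $\{f(v)\}$, which is present in $K_2^{\{1,2\}}(s)$; and every $2$-edge $\{u,v\} \in E(H^2)$ has its endpoints in opposite classes after applying $f$, so $\{f(u), f(v)\}$ lies in the complete bipartite part of $K_2^{\{1,2\}}(s)$. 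This establishes $K_2^{\{1,2\}} \subseteq H \subseteq K_2^{\{1,2\}}(s)$, and the Squeeze Theorem yields $\pi(H) = \pi(K_2^{\{1,2\}}) = \tfrac{5}{4}$, the last equality coming from Example~1.

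The key observation powering the argument is that bipartiteness of $H^2$ is precisely the condition that lets $H$ embed into the bipartite blow-up of $K_2^{\{1,2\}}$; beyond checking this embedding, no substantive obstacle is expected.
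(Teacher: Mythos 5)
Your proof is correct and follows the same route as the paper: apply the Squeeze Theorem with $H' = K_2^{\{1,2\}}$, using bipartiteness of $H^2$ to embed $H$ in a blow-up $K_2^{\{1,2\}}(s)$. You simply spell out the embedding more explicitly than the paper does.
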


{\bf Proof:}
First, in example 1, we computed $\pi(K_{2}^{\{1,2\}})=\frac{5}{4}$.
Second, $H$ must be contained in some blow-up of $K_{2}^{\{1,2\}}$ since $H^{2}$ is bipartite, i.e. there exists some $s>2$ such that $H\subseteq K_{2}^{\{1,2\}}(s)$.
So, by the squeeze theorem we have 
\[\frac{5}{4}=\pi(K_{2}^{\{1,2\}}) \leq \pi(H) \leq \pi(K_{2}^{\{1,2\}}(s))=\frac{5}{4}.\]
Hence $\pi(H)=\frac{5}{4}$ as claimed.
\hfill $\square$

\begin{definition}
We will say that $H=H^{1}\cup H^{2}$ is a \textbf{\textit{closed path}} (from $x_{1}$ to $x_{k}$) of length $k$
if $V(H)=\{x_{1}, x_{2},...,x_{k}\}$ and $E(H^{1})=\{\{x_{1}\}, \{x_{k}\}\}$ and $E(H^{2})=\{ \{x_{i}, x_{i+1}\}:1\leq i\leq k-1\}$.
We will denote a closed path of length $k$, or a closed $k$-path, by $\bar{P}_{k}$.
\end{definition}

Pictorially, we view a closed path of length $k$ as follows:

\begin{center}
\begin{tikzpicture}[vertex_closed/.style={circle, draw=black, fill=black}, vertex_open/.style={circle, draw=black, fill=white}, scale=1.5]
\node at (0,0) [vertex_closed] (v1) [label=below:{$x_{1}$}] {};
\node at (1,0) [vertex_open] (v2) [label=below:{$x_{2}$}] {};
\node at (2,0) [vertex_open] (v3) [label=below:{$x_{3}$}] {};
\node at (3,0) [vertex_open] (v4) [label=below:{$x_{k-2}$}] {};
\node at (4,0) [vertex_open] (v5) [label=below:{$x_{k-1}$}] {};
\node at (5,0) [vertex_closed] (v6) [label=below:{$x_{k}$}] {};
\draw [draw=black, ultra thick] (v1)--(v2);
\draw [draw=black, ultra thick] (v2)--(v3);
\draw [draw=black, ultra thick] (v4)--(v5);
\draw [draw=black, ultra thick] (v5)--(v6);
\draw [draw=black, ultra thick] (v3)--(2.3, 0);
\draw [draw=black, ultra thick] (2.7, 0)--(v4);
\node at (2.5, 0) {$\dots$};
\end{tikzpicture}
\end{center}

\begin{prop}
Let $H=H^{1}\cup H^{2}$.
If $H^{2}$ is bipartite and $H$ does not contain a copy of $K_{2}^{\{1,2\}}$ and $H$ contains a closed path of length $2k$, then $\pi(H)=\frac{9}{8}$.
\end{prop}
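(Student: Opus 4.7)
The plan is to sandwich $\pi(H)$ between $9/8$ (from an explicit construction) and $9/8$ (by embedding $H$ into a blow-up of $\bar P_4$ and then computing $\pi(\bar P_4)$).

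\textbf{Lower bound.} Partition $[n]$ into $A\sqcup B$ with $|A|=\lfloor 3n/4\rfloor$, and let $G_n$ have $E^1=\{\{v\}:v\in A\}$ and $E^2$ equal to the complete bipartite graph between $A$ and $B$. Since $G_n^2$ is bipartite with parts $A$ and $B$, every odd-length path in $G_n^2$ has its endpoints in different parts; in particular no path of length $2k-1$ joins two vertices of $A$, so $G_n$ contains no $\bar P_{2k}$ and \emph{a fortiori} no $H$. Its Lubell value tends to $\tfrac{3}{4}+2\cdot\tfrac{3}{4}\cdot\tfrac{1}{4}=\tfrac{9}{8}$, giving $\pi(H)\ge 9/8$.

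\textbf{Upper bound.} I first exhibit an embedding $H\subseteq \bar P_4(s)$ for $s=v(H)$. Fix a bipartition $(X,Y)$ of $V(H)$ witnessing $H^2$ bipartite, let $A$ be the 1-vertex set of $H$, and split $A_X=A\cap X$, $A_Y=A\cap Y$. Send $A_X, Y\setminus A, X\setminus A, A_Y$ injectively into the four layers $V_1,V_2,V_3,V_4$ of $\bar P_4(s)$. Each 2-edge of $H$ runs between $X$ and $Y$ and hence is one of four types; three of them map between consecutive $V_i,V_{i+1}$ and are legal 2-edges of $\bar P_4(s)$, while the fourth type $(A_X,A_Y)\mapsto(V_1,V_4)$ is ruled out, because such a 2-edge would join two 1-vertices, contradicting the no-$K_2^{\{1,2\}}$ hypothesis. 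The 1-vertices of $H$ land in $V_1\cup V_4$, exactly where the 1-edges of $\bar P_4(s)$ live. Thus $H\subseteq \bar P_4(s)$, and by Theorem~\ref{blowup}, $\pi(H)\le \pi(\bar P_4(s))=\pi(\bar P_4)$.

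\textbf{Reduction to $\pi(\bar P_4)\le 9/8$.} Let $G$ be any $\bar P_4$-free hypergraph of edge-type $\{1,2\}$ on $n$ vertices, with 1-vertex set $A$ of density $a=|A|/n$ and $m$ 2-edges. The $\bar P_4$-free condition forbids every 3-edge path in $G^2$ joining two distinct vertices of $A$. Analyzing each 2-edge $\{w,x\}$ locally, by comparing how many $A$-neighbors $w$ and $x$ can simultaneously carry without producing such a path, one shows that 2-edges inside $A$ and inside $V\setminus A$ together contribute only $o(n^2)$, while 2-edges crossing between $A$ and $V\setminus A$ number at most $a(1-a)n^2$. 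Hence $h_n(G)\le a+2a(1-a)+o(1)=3a-2a^2+o(1)$, which is maximized on $[0,1]$ at $a=3/4$ with value $9/8$. Combined with the embedding step, this yields $\pi(H)\le 9/8$.

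\textbf{Main obstacle.} The delicate step is the final one: the $\bar P_4$-free hypothesis is a global statement about length-three paths between marked vertices, and converting it into a sharp edge-count requires handling 2-edges of all three endpoint-types ($A$-$A$, $A$-$(V\setminus A)$, and $(V\setminus A)$-$(V\setminus A)$) simultaneously while guaranteeing that the accumulated error is truly $o(n^2)$. In particular, 2-edges inside $A$ are allowed (they are only $\bar P_2$'s, not $\bar P_4$'s), so their control must come from how they restrict the $A$-neighborhoods of their endpoints in a second-order way.
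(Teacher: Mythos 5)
Your overall plan matches the paper's: same $\tfrac{3n}{4}/\tfrac{n}{4}$ construction for the lower bound, same embedding of $H$ into a blow-up of $\bar P_4$, and then an estimate of $\pi(\bar P_4)$. The first two steps are correct (and the embedding step is written out more explicitly than the paper's, which is nice). The third step, however, has a genuine gap.

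Your key intermediate claim --- that ``2-edges inside $A$ and inside $V\setminus A$ together contribute only $o(n^2)$'' --- is false. The $\bar P_4$-free condition only forbids 3-edge paths whose \emph{two endpoints are distinct $1$-vertices}; it places no direct restriction on 2-edges among vertices that have no $A$-neighbors. Concretely, take $A=\emptyset$ (or $|A|=1$) and let $G^2$ be the complete graph $K_n$: there are no two distinct 1-vertices, so $G$ is $\bar P_4$-free, yet all $\binom{n}{2}$ 2-edges lie inside $V\setminus A$ and $h_n(G)\to 1$, while your bound $3a-2a^2+o(1)$ tends to $0$ as $a\to 0$. So the inequality $h_n(G)\le 3a-2a^2+o(1)$ is not a valid pointwise bound, and the one-variable maximization that follows from it does not establish $\pi(\bar P_4)\le 9/8$.

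The paper's argument avoids this by refining $V\setminus A$ (which the paper calls $V\setminus X_n$) into two parts: $Y_n$, the vertices with at least two $A$-neighbors, and $Z_n$, the vertices with at most one. It then observes that $E(G)\cap\binom{Y_n}{2}=\emptyset$ (any such 2-edge, combined with two distinct $A$-neighbors on each side, yields a $\bar P_4$), while $Z_n$ contributes at most $|Z_n|$ edges to $A$. The Lubell value is then bounded by the two-variable function $x^2 - x - y^2 + 2xy + 1$ over $\{x,y\ge 0,\ x+y\le 1\}$, whose maximum is $9/8$ at $(x,y)=(3/4,1/4)$ (forcing $|Z_n|=o(n)$ at the optimum, which is precisely why the $\binom{|Z_n|}{2}$ term is harmless \emph{there} but not for general $\bar P_4$-free $G$). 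Your ``main obstacle'' paragraph correctly senses the difficulty, but the remedy requires this finer decomposition, not a tighter local analysis of a single 2-edge.
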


{\bf Proof:}
First, we will give a construction giving us the lower bound.
For any $n\in \mathbb{N}$ let $G_{n}$ have vertex set $[n]$.
Partition the vertices of $G_{n}$ into two sets $X$ and $\bar{X}$ where $|X|=\frac{3n}{4}$ and $|\bar{X}|=\frac{n}{4}$.
Let \[E(G)=\{\{x\}:x\in X\} \cup \{\{x,\bar{x}\}: x\in X \text{ and } \bar{x}\in \bar{X}\}.\]
It is clear that $G_{n}$ contains no closed paths of length $2k$ when $k\geq 1$.
Also,
\begin{align*}
\lim_{n\to\infty} h_{n}(G_{n}) &= \lim_{n\to\infty} \frac{|X|}{\binom{n}{1}}+\frac{|X|\cdot |\bar{X}|}{\binom{n}{2}} \\
             &= \lim_{n\to\infty} \frac{3}{4}+\frac{\frac{3}{16} n^{2}}{\binom{n}{2}} \\
	     &= \frac{3}{4}+\frac{3}{8} =\frac{9}{8}.
\end{align*}
Thus $\pi(H)\geq \frac{9}{8}$ for any $H$ containing a closed path of length $2k$ for any $k\geq 1$.

Since $H^{2}$ is bipartite, and $H^{2}$ does not contain a copy of $K_{2}^{\{1,2\}}$, then $H$ is contained in a blow-up of a closed $4$-path.
To see this, note that there is a bipartition of the vertices of $H$, $V(H)=A\cup B$, (with respect to the $2$-edges in $H$).
Furthermore, we can partition $A$ into $A_{1}\cup A_{2}$ where $v\in A_{1}$ if $\{v\}\in E(H)$ and $v\in A$, $v\in A_{2}$ if $v\in A\setminus A_{1}$.
And similarly partition $B$ into $B_{1}\cup B_{2}$ with $v\in B_{1}$ if $\{v\}\in E(H)$ and $v\in B$.
Then note that there are no edges from $A_{1}$ to $B_{1}$ since $H$ contains no copy of $K_{2}^{\{1,2\}}$.
So $H\subset \bar{P}_{4}(\max\{|A_{1}|, |A_{2}|, |B_{1}|, |B_{2}|)$--a blow-up of $\bar{P}_{4}$.
Below is a graphical representation of $H$, illustrating that $H$ is contained in a blow-up of $\bar{P}_{4}$.
\begin{center}
\begin{tikzpicture}[vertex_closed/.style={circle, draw=black, fill=black}, vertex_open/.style={circle, draw=black, fill=white}]
\draw [draw=black, very thick] (0, 0) circle (1 cm);
\draw [draw=black, very thin] (0,-3) circle (1 cm);
\draw [draw=black, very thin] (3, 0) circle (1 cm);
\draw [draw=black, very thick] (3,-3) circle (1 cm);
\node at (-1.5, 0) {$A_{1}$};
\node at (-1.5,-3) {$A_{2}$};
\node at (4.5, 0) {$B_{2}$};
\node at (4.5, -3) {$B_{1}$};

\node at (0, .5) [vertex_closed] (v1) {};
\node at (-.5, -.5) [vertex_closed] (v2) {};
\node at (.35,-.35) [vertex_closed] (v3) {};

\node at (3, .5) [vertex_open] (v4) {};
\node at (2.65,-.35) [vertex_open] (v5) {};
\node at (3.5, -.5) [vertex_open] (v6) {};

\node at (0, -3.5) [vertex_open] (v7) {};
\node at (-.5, -2.65) [vertex_open] (v8) {};

\node at (3, -3.5) [vertex_closed] (v9) {};
\node at (3.5, -2.65) [vertex_closed] (v10) {};

\draw [draw=black, ultra thick] (.6,-.2)--(2.75, 0);
\draw [draw=black, ultra thick] (.35, .2)--(2.5, .5);
\draw [draw=black, ultra thick] (3,-.4)--(0,-2.65);
\draw [draw=black, ultra thick] (3.15, -.5)--(.35, -3);
\draw [draw=black, ultra thick] (.5, -3.5)--(2.5, -3.25);
\draw [draw=black, ultra thick] (.65,-3.25)--(2.7, -2.8);
\end{tikzpicture}
\end{center}

Since $\pi(H)\leq \pi(\bar{P}_{4}(s))=\pi(\bar{P}_{4})$ we need only show that $\pi(\bar{P}_{4})\leq \frac{9}{8}$.
Let $G_{n}$ be a family of $\bar{P}_{4}$-free graphs such that $h_{n}(G_{n})=\pi_{n}(\bar{P}_{4})$.
Partition the vertices of $G_{n}$ as follows:
\begin{align*}
X_{n} &= \{v:\{v\}\in E(G_{n})\}, \\
Y_{n} &= \{v: \{v\}\notin E(G_{n}) \text{ and } \exists x_{1}\neq x_{2}\in X_{n} \text{ with } \{x_{1}, v\}, \{x_{2}, v\} \in E(G_{n})\}, \\
Z_{n} &= V(G)\setminus (X_{n}\cup Y_{n}).
\end{align*}
Let us say that $|X_{n}|=xn$, $|Y_{n}|=yn$ and hence $|Z_{n}|=(1-x-y)n$.

First, note that $E(G)\cap \binom{Y_{n}}{2}=\emptyset$.
Otherwise, since each vertex in $Y_{n}$ has at least 2 neighbors in $X_{n}$, $G_{n}$ would contain a closed path of length $4$.
Also, each vertex in $Z_{n}$ has at most 1 neighbor in $X_{n}$.
It follows that
\begin{align*}
\pi(\bar{P}_{4}) &=\lim_{n\to\infty} \pi_{n}(\bar{P}_{4}) \\
           &= \lim_{n\to \infty} h_{n}(G_{n}) \\
           &\leq \lim_{n\to\infty} \frac{|X_{n}|}{\binom{n}{1}} + \frac{|X_{n}|\cdot |Y_{n}|}{\binom{n}{2}} + \frac{|Y_{n}|\cdot |Z_{n}|}{\binom{n}{2}} + \frac{\binom{|Z_{n}|}{2}}{\binom{n}{2}} + \frac{|Z_{n}|}{\binom{n}{2}} \\
	   &\leq \lim_{n\to\infty} \frac{xn}{\binom{n}{1}} + \frac{xyn^{2}}{\binom{n}{2}} + \frac{y(1-x-y)n^{2}}{\binom{n}{2}} + \frac{\frac{(1-x-y)^{2}n^{2}}{2}}{\binom{n}{2}} + \frac{(1-x-y)n}{\binom{n}{2}} \\
	   &\leq \max_{\substack {0\leq x\leq 1 \\ 0\leq y\leq 1-x}} x + 2xy + 2y(1-x-y)+ (1-x-y)^{2} \\
	   &=\frac{9}{8}.
\end{align*}
The last inequality is an easy multivariate calculus exercise.
One can also verify it with software, such as \textit{Mathematica}, the syntax being:
\begin{center}
\begin{lstlisting}
Maximize[{x^2-x-y^2+2*x*y+1, 0<=x<=1, 0<=y<=1-x}, {x,y}].
\end{lstlisting}
\end{center}
It may be of interest to note that the maximum value of the function is obtained when $x=\frac{3}{4}$ and $y=\frac{1}{4}$.
In this case $Z_{n}$ is empty.
Since our upper bound matches the lower bound, we have the desired result.  \hfill $\square$

\begin{prop}
Let $H=H^{1}\cup H^{2}$.
If $H^{2}$ is bipartite and $H^{2}$ does not contain a closed $2k$-path for any $k\geq 1$, then $\pi(H)=1$.
\end{prop}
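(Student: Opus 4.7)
The lower bound $\pi(H)\ge 1$ is immediate from Proposition~\ref{p1}, item~1, because $R(H)=\{1,2\}$, so I only need $\pi(H)\le 1$. My strategy is to embed $H$ into a blow-up of a tiny ``obstruction'' hypergraph $H_{0}$ whose Tur\'an density I can compute by hand, then invoke Theorem~\ref{blowup} together with subgraph monotonicity.

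\textbf{Step 1 (containment in a blow-up).} Let $H_{0}$ be the two-vertex hypergraph on $\{a,b\}$ with edges $\{a\}$ and $\{a,b\}$, and set $X:=E(H^{1})$. Since $H^{2}$ is bipartite, each connected component of $H^{2}$ admits a proper $2$-coloring. The hypothesis that $\bar P_{2k}\not\subseteq H$ for every $k\ge 1$ translates to the following statement: for any two distinct $u,v\in X$, every path from $u$ to $v$ in $H^{2}$ has \emph{even} length, because an odd-length $H^{2}$-path together with the singletons at its endpoints would form some $\bar P_{2k}$. Hence inside each component of $H^{2}$ the set $X$ lies entirely in one color class. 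Declare those classes to form $A$ and their complements to form $B$; place vertices isolated in $H^{2}$ in $A$ if they belong to $X$ and in $B$ otherwise. Then $V(H)=A\sqcup B$, $X\subseteq A$, and every $2$-edge of $H$ crosses from $A$ to $B$. This matches the edge pattern of $H_{0}(|A|,|B|)$, so $H\subseteq H_{0}(s)$ with $s:=\max(|A|,|B|)$.

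\textbf{Step 2 (computing $\pi(H_{0})=1$).} In any $H_{0}$-free hypergraph $G$ on $n$ vertices, a singleton-vertex $v$ cannot appear in any $2$-edge, for otherwise $\{v\}$ and that $2$-edge would give a copy of $H_{0}$. Writing $Y:=\{v:\{v\}\in E(G)\}$ and $x:=|Y|/n$, the $2$-edges of $G$ lie in $\binom{V(G)\setminus Y}{2}$, so
\[
h_{n}(G)\;\le\;x+\frac{\binom{(1-x)n}{2}}{\binom{n}{2}}\;=\;1-x+x^{2}+o(1)\;\le\;1+o(1),
\]
since $1-x+x^{2}\le 1$ on $[0,1]$. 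Together with Proposition~\ref{p1} this yields $\pi(H_{0})=1$.

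\textbf{Step 3 (conclusion) and main obstacle.} Theorem~\ref{blowup} gives $\pi(H_{0}(s))=\pi(H_{0})=1$, and the containment $H\subseteq H_{0}(s)$ combined with monotonicity of $\pi$ under subgraphs gives $\pi(H)\le 1$, matching the lower bound. The heart of the argument is Step~1: coordinating the proper $2$-colorings on all components of $H^{2}$ so that \emph{every} singleton-vertex ends up on the same side. Once that parity-of-path-length observation is in place, the rest is the machinery already developed for blow-ups plus a one-variable optimization.
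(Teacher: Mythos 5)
Your proof is correct and takes essentially the same route as the paper: show that the absence of closed even paths forces all singleton-vertices into one side of a (component-wise adjusted) bipartition, so $H$ sits inside a blow-up of the chain $C^{\{1,2\}}$, then combine the trivial lower bound with Theorem~\ref{blowup} and subgraph monotonicity. The only cosmetic differences are that you recompute $\pi(C^{\{1,2\}})=1$ directly instead of citing Proposition~\ref{p1} (flags are degenerate), and that your per-component recoloring argument in Step~1 spells out a detail the paper only conveys with a picture.
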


{\bf Proof:}
First, since $|R(H)|=2$ we have, trivially, that $\pi(H)\geq 1$.
Since $H$ contains no path of length $2k$ for any $k\geq 1$ it must be the case that $H$ is contained in a blow-up of a chain $C^{\{1,2\}}=\{\{x\}, \{x,y\}\}$.
This is most clearly seen by again, considering the previous illustration.  The difference is, in this case, $B_{1}$ (or $A_{1}$) is empty.
\begin{center}
\begin{tikzpicture}[vertex_closed/.style={circle, draw=black, fill=black}, vertex_open/.style={circle, draw=black, fill=white}]
\draw [draw=black, very thick] (0, 0) circle (1 cm);
\draw [draw=black, very thin] (0,-3) circle (1 cm);
\draw [draw=black, very thin] (3, 0) circle (1 cm);
\node at (-1.5, 0) {$A_{1}$};
\node at (-1.5,-3) {$A_{2}$};
\node at (4.5, 0) {$B_{2}$};

\node at (0, .5) [vertex_closed] (v1) {};
\node at (-.5, -.5) [vertex_closed] (v2) {};
\node at (.35,-.35) [vertex_closed] (v3) {};

\node at (3, .5) [vertex_open] (v4) {};
\node at (2.65,-.35) [vertex_open] (v5) {};
\node at (3.5, -.5) [vertex_open] (v6) {};

\node at (0, -3.5) [vertex_open] (v7) {};
\node at (-.5, -2.65) [vertex_open] (v8) {};

\draw [draw=black, ultra thick] (.6,-.2)--(2.75, 0);
\draw [draw=black, ultra thick] (.35, .2)--(2.5, .5);
\draw [draw=black, ultra thick] (3,-.4)--(0,-2.65);
\draw [draw=black, ultra thick] (3.15, -.5)--(.35, -3);

\node at (1.5, -4) {$H$};

\node at (7,0) [vertex_closed] (w1) {};
\node at (8,0) [vertex_open] (w2) {};
\node at (7,-1) [vertex_open] (w3) {};
\draw [draw=black, ultra thick] (w1)--(w2)--(w3);
\node at (7.5, -1.5) {$K$};
\end{tikzpicture}
\end{center}
It is clear that $H$ is contained in a blow-up of $K$ where \[K=\{\{x\}, \{x,y\}, \{y,z\}\}\subseteq C^{\{1,2\}}(2,1)=\{\{x\}, \{z\}, \{x, y\}, \{z, y\}\}.\]
It follows that $\pi(H)\leq \pi(C^{\{1,2\}})=1$. \hfill $\square$

The combination of these propositions completely determines $\pi(H)$ when $R(H)=\{1,2\}$.
The results are summarized by the following theorem.

\begin{theorem}\label{t12}
For any hypergraph $H$ with  $R(H)=\{1,2\}$, we have 
\[\pi(H) = \begin{cases} 2-\frac{1}{\chi(H^{2})-1} & \text{if } H^{2} \text{ is not bipartite}; \\
                         \frac{5}{4}             & \text{if } H^{2} \text{ is bipartite and } \min \{k:\bar{P}_{2k}\subseteq H\}=1; \\
			 \frac{9}{8}             & \text{if } H^{2} \text{ is bipartite and } \min \{k:\bar{P}_{2k}\subseteq H\}\geq 2; \\
			 1			 & \text{if } H^{2} \text{ is bipartite and } \bar{P}_{2k}\nsubseteq H \text{ for any }k\geq 1. \end{cases}\]
\end{theorem}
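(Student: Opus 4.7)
The plan is to obtain Theorem~\ref{t12} by simply assembling the four propositions that immediately precede it, one per line of the displayed case statement. Since each of those propositions has already been proved, the remaining task is purely organizational: check that the hypotheses of the four propositions match the four conditions listed in the theorem and that these conditions exhaust the space of $\{1,2\}$-hypergraphs with no overlap.

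The non-bipartite case is immediate: if $H^2$ is not bipartite, the first preceding proposition yields $\pi(H)=1+\pi(H^2)=2-\frac{1}{\chi(H^2)-1}$, matching the first line. For the bipartite cases, the key observation is the identification $\bar{P}_2 = K_2^{\{1,2\}}$; by the definition of a closed path, $\bar{P}_2$ is the hypergraph on $\{x_1,x_2\}$ with edges $\{x_1\}$, $\{x_2\}$, $\{x_1,x_2\}$, which is exactly the complete $\{1,2\}$-graph on two vertices. Hence the condition $\min\{k:\bar{P}_{2k}\subseteq H\}=1$ coincides with $K_2^{\{1,2\}}\subseteq H$, and the second proposition gives $\pi(H)=\frac{5}{4}$, which is the second line.

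When $H^2$ is bipartite and $\min\{k:\bar{P}_{2k}\subseteq H\}\geq 2$, we simultaneously have $K_2^{\{1,2\}}\not\subseteq H$ and some $\bar{P}_{2k}$ with $k\geq 2$ contained in $H$; this is exactly the hypothesis of the third proposition, yielding $\pi(H)=\frac{9}{8}$. Finally, if $H^2$ is bipartite and no $\bar{P}_{2k}$ is contained in $H$ for any $k\geq 1$, the fourth proposition applies and gives $\pi(H)=1$. The four hypotheses (bipartite or not, and among the bipartite hypergraphs sorted by the minimum $k$ with $\bar{P}_{2k}\subseteq H$, with this minimum equal to $\infty$ in the last case) clearly partition the set of $\{1,2\}$-hypergraphs, so every $H$ is treated by exactly one case and the theorem follows.

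There is no substantive obstacle here, since all the real content lives in the preceding propositions; the proof is essentially bookkeeping. The only small point worth making explicit in the write-up is the identification $\bar{P}_2=K_2^{\{1,2\}}$, which is what aligns the second line of the case distinction with the $K_2^{\{1,2\}}$-based proposition; after noting it, the remaining assembly is mechanical.
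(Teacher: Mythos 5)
Your proposal is correct and matches the paper's treatment exactly: the paper offers no separate proof of Theorem~\ref{t12}, instead presenting it as a direct summary of the four preceding propositions (``The combination of these propositions completely determines $\pi(H)$ when $R(H)=\{1,2\}$''). Your one explicit point of bookkeeping, the identification $\bar{P}_2 = K_2^{\{1,2\}}$ which aligns the ``$\min\{k:\bar{P}_{2k}\subseteq H\}=1$'' case with the $K_2^{\{1,2\}}$-based proposition, is the right detail to flag and is the only non-mechanical step in the assembly.
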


\section{Degenerate hypergraphs}

Recall that a hypergraph $H$ is {\em degenerate} if $\pi(H)=|R(H)|-1$.
For $k$-uniform hypergraph $H$, $H$ is degenerate if and only $H$ is $k$-partite.
From Proposition \ref{p1} and Theorem \ref{blowup}, we have the following proposition.
\begin{prop}
Suppose $H$ is a degenerate hypergraph. 
Then the following properties hold.
\begin{itemize}
\item Every subgraph of $H$ is degenerate.
\item Every blowup of $H$ is degenerate.
\item Any subgraph of the blowup of a flag is degenerate. 
\end{itemize}
\end{prop}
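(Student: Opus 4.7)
The plan is to chain together Proposition \ref{p1} and Theorem \ref{blowup}; no new inequality is needed, only bookkeeping of edge-type sets.

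\textbf{First bullet (subgraphs).} Let $H'\subseteq H$. Proposition \ref{p1}(2) gives
\[\pi(H')\leq \pi(H)-|R(H)|+|R(H')|.\]
Substituting the hypothesis $\pi(H)=|R(H)|-1$ yields $\pi(H')\leq |R(H')|-1$. Combined with the matching lower bound $\pi(H')\geq |R(H')|-1$ from Proposition \ref{p1}(1), we conclude $\pi(H')=|R(H')|-1$, so $H'$ is degenerate.

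\textbf{Second bullet (blowups).} First handle the uniform case $H(s)$ for $s\geq 2$. Since every edge $F\in E(H)$ gives rise to at least one edge of the same cardinality in $H(s)$ (take one representative per vertex class), we have $R(H(s))=R(H)$. Theorem \ref{blowup} then gives
\[\pi(H(s))=\pi(H)=|R(H)|-1=|R(H(s))|-1,\]
so $H(s)$ is degenerate. For a general blowup $H(s_1,\ldots,s_n)$ with $s_i\geq 1$, set $s=\max_i s_i\vee 2$; then $H(s_1,\ldots,s_n)\subseteq H(s)$, and $R(H(s_1,\ldots,s_n))=R(H)=R(H(s))$. Apply the first bullet to this subgraph containment to conclude degeneracy.

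\textbf{Third bullet (subgraphs of blowups of flags).} By Proposition \ref{p1}(3), any $R$-flag $L$ satisfies $\pi(L)=|R|-1$, so $L$ itself is degenerate. The second bullet then says every blowup of $L$ is degenerate, and the first bullet says every subgraph of such a blowup is degenerate.

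The only place where some care is required is verifying $R(H(s_1,\ldots,s_n))=R(H)$ when applying the degeneracy criterion, since the definition ``$\pi(H)=|R(H)|-1$'' is sensitive to the exact edge-type set; once this is in hand, the proposition follows mechanically from the two prior results. There is no genuine obstacle.
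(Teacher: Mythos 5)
Your proof is correct and follows exactly the route the paper intends: the paper gives no written proof beyond citing Proposition \ref{p1} and Theorem \ref{blowup}, and your argument is precisely the straightforward chaining of those two results (with the correct observation that blowups preserve the edge-type set $R$). Nothing further is needed.
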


Note that every flag is a subgraph of some blowup of a chain with
the same edge type.
Is every degenerate hypergraph a subgraph of some blowup of a chain?
The answer is yes for uniform hypergraphs and $\{1,2\}$-hypergraphs.
This follows from Theorem \ref{t12}, which completely determined $\pi(H)$ when $R(H)=\{1,2\}$,
and from the fact that a $k$-uniform hypergraph is degenerate if and only if it is $k$-partite (a subgraph of a blowup of a single edge).
However, the answer in general is false.
We will show that the following hypergraph $H_1$ with edge set $E(H_1)=\{\{1,2\}, \{1,3\}, \{2,3,4,\}\}$ is degenerate. 

\begin{center}
\begin{tikzpicture}[wvertex/.style={circle, draw=black, fill=white}, bvertex/.style={circle, draw=black, fill=black}] 
\draw[draw=black!50!white, fill=black!50!white] (2,1)--(4,0)--(2,-1)--cycle;
\node at (0,0) [wvertex] (v1) [label=above:{1}] {};
\node at (2,1) [wvertex] (v2) [label=above:{2}] {};
\node at (2,-1) [wvertex] (v3) [label=below:{3}] {};
\node at (4,0) [wvertex] (v4) [label=above:{4}] {};
\draw [draw=black, ultra thick] (v1)--(v2);
\draw [draw=black, ultra thick] (v1)--(v3);
\end{tikzpicture} \\
$H_{1}$: A degenerate hypergraph not contained in the blowup of a chain.
\end{center}

This result is a special case of the following theorem.

\begin{definition}
Let $H$ be a hypergraph containing some $2$-edges.
The $2$-subdivision of $H$ is a new hypergraph $H'$ obtained from $H$ by subdividing each $2$-edge simultaneously.
Namely, if $H$ contains $t$ $2$-edges, add $t$ new vertices $x_1,x_2,\ldots,x_t$ to $H$ and for $i=1,2,\ldots, t$ replace the $2$-edge $\{u_{i},v_{i}\}$ with $\{u_i,x_i\}$ and $\{x_i,v_i\}$.
\end{definition}

\begin{theorem}\label{t:sd}
Let $H'$ be the $2$-subdivision of $H$.
If $H$ is degenerate, then so is $H'$.
\end{theorem}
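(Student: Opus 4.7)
My plan is to argue by contradiction. Suppose $\pi(H')>|R(H')|-1=|R(H)|-1$; then for some $\delta>0$ and infinitely many $n$ there exist $H'$-free $R$-graphs $G_n$ (with $R=R(H)=R(H')$) satisfying $h_n(G_n)>|R(H)|-1+\delta$. From such a $G_n$ I will extract an $H$-free auxiliary hypergraph $\tilde G_n$ with essentially the same Lubell value, directly contradicting the degeneracy of $H$.

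The construction I would use is the following: $\tilde G_n$ shares every non-2-edge with $G_n$, while a pair $\{a,b\}$ is declared a 2-edge of $\tilde G_n$ precisely when $|N_{G_n^2}(a)\cap N_{G_n^2}(b)|\geq C$, where $C:=v(H')$. To see $\tilde G_n$ is $H$-free, suppose $\phi\colon V(H)\hookrightarrow V(\tilde G_n)$ were an embedding; the non-2-edges of $H$ map into $G_n$ automatically, while each 2-edge $\{u_i,v_i\}$ of $H$ sits on a pair with at least $C$ common 2-neighbors in $G_n$. Greedily selecting distinct subdivision vertices $x_i$ from these common neighborhoods, avoiding the at most $v(H')-1<C$ vertices already used, extends $\phi$ to an embedding of $H'$ into $G_n$, contradicting $H'$-freeness. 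Since $\tilde G_n$ is $H$-free and $R(\tilde G_n)\subseteq R(H)$, the degeneracy of $H$ then gives $h_n(\tilde G_n)\leq\pi_n(H)\to|R(H)|-1$.

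To close the argument I need the matching inequality $h_n(\tilde G_n)\geq h_n(G_n)-o(1)$, and since $\tilde G_n$ and $G_n$ coincide on non-2-edges this reduces to $|E(\tilde G_n^2)|\geq|E(G_n^2)|-o(n^2)$. This is the hard step, because the 2-edges of $G_n$ whose endpoints have fewer than $C$ common neighbors are lost when passing to $\tilde G_n$ and must be compensated by the pairs (possibly non-edges of $G_n^2$) that are newly gained. I would split into two regimes. When $|E(G_n^2)|\geq \epsilon n^2$, the cherry count $\sum_x\binom{d_{G_n^2}(x)}{2}\geq 2|E(G_n^2)|^2/n-|E(G_n^2)|$ from Cauchy--Schwarz forces most pairs to have codegree at least $C$, making $|E(\tilde G_n^2)|$ large. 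In the sparser regime, where many edges of $G_n^2$ have small codegree, the $H'$-freeness of $G_n$ couples the 2-edge structure to the non-2-edges: imitating the analysis that establishes the degeneracy of the example $H_1$, the 2-edges supported on pairs lying inside partial copies of $H^{\neq 2}$ form a $K_{2,C}$-free configuration and so contribute only $O(n^{3/2})$ edges by the K\H{o}v\'ari--S\'os--Tur\'an theorem, while the remaining ``uncovered'' pairs are controlled by a Kruskal--Katona shadow estimate tied to the non-2-edge density of $G_n$. Unifying the two regimes yields $|E(\tilde G_n^2)|\geq|E(G_n^2)|-o(n^2)$, and hence $|R(H)|-1+\delta-o(1)\leq h_n(\tilde G_n)\leq|R(H)|-1+o(1)$, the desired contradiction.
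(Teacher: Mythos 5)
Your overall architecture mirrors the paper's: replace the $2$-edges of $G_n$ by a "common-neighbor" auxiliary $2$-graph, show the new hypergraph forbids $H$, show the Lubell value barely drops, and conclude via degeneracy of $H$. The genuine divergence is in the codegree threshold. You set the threshold at $C=v(H')$ and then embed $H'$ directly by a greedy choice of subdivision vertices, so you never invoke the blowup theorem. The paper instead uses the codegree-$\geq 1$ ``variant square'' $(G_n^2)^{(2)}$, for which it proves the exact extremal bound $|E(G)|-|E(G^{(2)})|\leq \lfloor n/2\rfloor$ (Lemma~\ref{l:square}), and then recovers the freedom to pick distinct subdivision vertices not from a high codegree threshold but from the blowup theorem: since $H$ degenerate implies $H(t+1)$ degenerate, the modified hypergraph contains $H(t+1)$, and a greedy argument inside the $(t+1)$-fold blowup produces $t$ distinct subdivision vertices.

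The gap in your version is precisely the step you flag as ``the hard step'': the inequality $|E(\tilde G_n^2)|\geq |E(G_n^2)|-o(n^2)$. Your dense-regime argument does not close this. Writing $m:=|E(G_n^2)|$ and letting $X$ denote the number of pairs with codegree $\geq C$, Cauchy--Schwarz plus the trivial codegree bound gives
\[
\frac{2m^2}{n}-m\;\leq\;\sum_v\binom{d(v)}{2}\;\leq\;nX+(C-1)\binom{n}{2},
\]
hence only $X\geq \frac{2m^2}{n^2}-O(n)$. When $m=\epsilon n^2$ with $\epsilon<\frac12$, this yields $X\geq (2\epsilon^2-o(1))n^2$, which is strictly smaller than the $\epsilon n^2-o(n^2)$ you need, so it does not show $|E(\tilde G_n^2)|\geq m-o(n^2)$. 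The sparse-regime sketch (partial copies, $K_{2,C}$-freeness, Kruskal--Katona) is not a proof either; in particular the low-codegree edges form a subgraph whose codegrees are controlled in $G_n^2$, not internally, so Kővári--Sós--Turán does not directly apply, and the reduction to the non-$2$-edge structure of an $H'$-free graph is unjustified. This is exactly the sort of delicacy the paper's Lemma~\ref{l:square} is designed to sidestep: by taking the codegree threshold to be $1$, a clean $O(n)$ loss can be proved by induction (peeling off chordless cycles and using a bijection between cross edges), and the multiplicity you need to choose $t$ distinct subdivision vertices comes for free from working inside $H(t+1)$ rather than $H$. To repair your proof you would need to actually establish $|E(G_n^2)|-|E(\tilde G_n^2)|=o(n^2)$ for all graphs $G_n^2$, not just recover the weak $X\gtrsim 2\epsilon^2 n^2$ bound; alternatively you could switch back to the threshold-$1$ square and import the blowup theorem as the paper does.
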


For example, $H_1$ can be viewed as the $2$-division of the chain $C^{\{2,3\}}$.
Since any chain is degenerate, so is $H_1$.
To prove this theorem, we need a Lemma on graphs, which has independent interest.

\begin{definition}
Let $G$ be any simple graph.
Then $G^{(2)}$, a variation of the square of $G$, will be defined as follows:
\begin{itemize}
\item $V(G^{(2)}):=V(G)$,
\item $E(G^{(2)}):=\{ \{u,v\}| \exists w\in V(G) \text{ with } \{u,w\},\{v,w\}\in E(G)\}$.
\end{itemize}
\end{definition}

Note that an edge of $G$ may or may not be an edge of $G^{(2)}$.
For example, if $G$ is the complete graph, then $G^{(2)}$ is also the complete graph.
However, if $G$ is a complete bipartite graph with partite set $V_1\cup V_2$,
then $G^{(2)}$ is the disjoint union of two complete graphs on $V_1$ and $V_2$.
In this case, $G^{(2)}$ is the complement graph of $G$!
We also note that $G^{(2)}$ is the empty graph if $G$ is a matching.
Surprisingly, we have the following Lemma on the difference of
the number of edges in $G$ and $G^{(2)}$.

\begin{lemma}\label{l:square}
For any simple graph $G$ on $n$ vertices,
\begin{equation}
  \label{eq:5.1}
|E(G)|-|E(G^{(2)})| \leq \left\lfloor
  \frac{n}{2}\right\rfloor.  
\end{equation}
Furthermore, equality holds if and only if $G$ is the vertex-disjoint union of complete bipartite graphs of balanced part-size
with at most one component having odd number of vertices, i.e.
\[G= K_{t_1,t_1}\cup K_{t_2,t_2} \cup \cdots \cup K_{t_k,t_k} \cup K_{\lfloor \frac{n}{2}\rfloor-\sum_{i=1}^kt_i, \lceil \frac{n}{2}\rceil-\sum_{i=1}^kt_i},\]
for some positive integers $t_1,t_2,\ldots, t_k$ satisfying $\sum_{i=1}^kt_i= \lfloor \frac{n}{2}\rfloor$.
\end{lemma}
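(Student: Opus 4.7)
The plan is to attack the inequality by first reducing to connected $G$ and then running an induction on $n$. Because a common neighbor of two vertices must lie in the same connected component, $G^{(2)}$ splits as a disjoint union over the components of $G$, so the quantity $c(G):=|E(G)|-|E(G^{(2)})|$ is additive across components. Meanwhile $\sum_i\lfloor n_i/2\rfloor\leq\lfloor n/2\rfloor$, with equality precisely when at most one $n_i$ is odd. Hence it suffices to prove the following connected version: for a connected graph $C$ on $n$ vertices, $c(C)\leq\lfloor n/2\rfloor$ with equality if and only if $C$ is a balanced complete bipartite graph $K_{\lfloor n/2\rfloor,\lceil n/2\rceil}$. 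Combined with the additive/subadditive comparison, this gives both the global inequality and the stated equality structure, with the ``at most one odd component'' condition coming out of the strict subadditivity of the floor.

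For the connected case, I would induct on $n$, with bases $n\in\{1,2\}$ immediate. The engine of the step is the single-vertex-removal identity
\[c(G)-c(G-v) \;=\; d_G(v)-|A_v|-|B_v|,\]
where $A_v$ is the set of vertices sharing a common neighbor with $v$ in $G$ and $B_v$ is the set of pairs $\{x,y\}\subseteq V\setminus\{v\}$ whose unique common neighbor in $G$ is $v$. Since $c(G-v)\leq\lfloor(n-1)/2\rfloor$ by induction, we want $v$ chosen so that $d(v)-|A_v|-|B_v|\leq\lfloor n/2\rfloor-\lfloor(n-1)/2\rfloor\in\{0,1\}$. If $G$ has a leaf $v$ whose neighbor $w$ has $d(w)\geq 2$, then $d(v)=1$, $|A_v|=d(w)-1$ and $|B_v|=0$, giving increment $2-d(w)\leq 0$, so the step closes for either parity of $n$. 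The only connected graph with a leaf in which this fails is $K_2$, a base case. Otherwise $\delta(G)\geq 2$, and I would pick $v$ of minimum degree and lower-bound $|A_v|+|B_v|$ using that each neighbor of $v$ has degree at least two, producing enough distinct vertices in the second neighborhood of $v$ to absorb its edges.

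The equality characterization is extracted by reversing the induction: $c(G)=\lfloor n/2\rfloor$ forces both the removed $v$ to saturate the increment inequality and the residue $G-v$ to saturate its inductive bound. Propagating this constraint back to the base case forces $G$ to be $K_{\lfloor n/2\rfloor,\lceil n/2\rceil}$ per component, matching the stated extremal structure. The main obstacle is the $\delta(G)\geq 2$ case when $n$ is odd: here the parity slack $\lfloor n/2\rfloor-\lfloor(n-1)/2\rfloor$ vanishes and we actually need $d(v)\leq|A_v|+|B_v|$, which in turn requires a careful accounting of second-neighbor pairs witnessed uniquely by $v$ and a use of connectedness to ensure that the second-neighborhood of a minimum-degree vertex is never too sparse. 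Tightening this step so that it also yields the rigidity needed for the equality conclusion is where the real work of the proof lives.
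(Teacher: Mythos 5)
Your reduction to connected graphs via additivity of $c(G):=|E(G)|-|E(G^{(2)})|$ over components is sound, as are the removal identity $c(G)-c(G-v)=d_G(v)-|A_v|-|B_v|$ and the leaf case. But the heart of the lemma --- the inductive step when $\delta(G)\geq 2$, together with the rigidity needed for the equality statement --- is precisely what you do not prove, and it is not a routine second-neighborhood count. Concretely: the lower bound you hope to extract from a minimum-degree vertex is false as stated, since in $K_{d,d}$ every vertex has $d(v)=d$ but $|A_v|+|B_v|=d-1$ (the $d-1$ vertices on its own side, and no pair of its neighbors has a unique common neighbor once $d\geq 2$), so the increment is $+1$ at \emph{every} vertex and your step only closes because $n$ is even and the parity slack is $1$. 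Thus any correct version of the claim must be parity-aware and must handle near-extremal configurations structurally. For odd $n$ you need some vertex with $d(v)\leq |A_v|+|B_v|$, and nothing in your sketch produces one: summing your identity over all vertices gives $\sum_v\bigl(d(v)-|A_v|-|B_v|\bigr)=2c(G)-\sum_v|B_v|$, so assuming every increment is at least $1$ only yields $c(G)\geq n/2$, which is the very bound being proved and gives no contradiction in a minimal-counterexample argument; averaging does not rescue the step. The same missing structural analysis is where all of the equality rigidity lives (you would also have to show how re-attaching $v$ to an extremal $G-v$ forces a balanced complete bipartite graph), and you explicitly defer it. As it stands, the proposal is a plausible program with its central case open, not a proof.

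For comparison, the paper takes a different and complete route: it first proves the bound for forests from the identity that there $c(G)=\sum_v\bigl(\tfrac{1}{2}\deg(v)-\binom{\deg(v)}{2}\bigr)\leq \tfrac{n}{2}$, and then inducts by removing a chordless cycle $C_g$ rather than a vertex, comparing separately the edges inside the cycle, inside the remainder, and crossing between them, the crossing edges being controlled by the injection $vx_i\mapsto vx_{i+1}$ around the cycle. The equality analysis then forces every chordless cycle to be a $C_4$, hence bipartiteness, then $2$-connectivity, completeness of the bipartite graph, and finally balance from $st-\binom{s}{2}-\binom{t}{2}=\tfrac{n}{2}-\tfrac{(s-t)^2}{2}$. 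If you want to salvage your vertex-removal scheme, you would need a lemma of comparable strength to this structural chain, e.g.\ a characterization of the connected graphs with $\delta\geq 2$ in which every vertex satisfies $d(v)>|A_v|+|B_v|$.
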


\begin{proof}
First, we will show that Equation \eqref{eq:5.1} holds for any forest.
Let $G$ be a forest.
Since $G$ is a forest, if $\{a,b\}\in E(G^{(2)})$ then $a$ and $b$ have a unique common neighbor in $G$.
Furthermore, given any vertex $c\in V(G)$, it follows that any pair of neighbors of $c$ is in $E(G^{(2)})$.
Thus we have
\begin{align*}
|E(G)|-|E(G^{(2)})| &= \frac{1}{2}\sum_{v\in V(G)} \deg(v) - \sum_{v\in
  V(G)} \binom{\deg(v)}{2} \\ 
            &=\sum_{v\in V(G)} \frac{1}{2}\deg(v)-\binom{\deg(v)}{2} \\
            &=\sum_{v\in V(G)}-\frac{1}{2}\deg(v)^{2} + \deg(v) \\
	    &\leq \sum_{v\in V(G)} \frac{1}{2} \\
	    &=\frac{n}{2}.
\end{align*}
The inequality above comes from the fact that $-\frac{1}{2}x^{2}+x \leq \frac{1}{2}$, attaining its maximum when $x=1$.
Since $|E(G)|-|E(G^{(2)})|$ is an integer, we have that 
\begin{equation}
  \label{eq:5.2}
|E(G)|-|E(G^{(2)})|\leq \left\lfloor \frac{n}{2}\right\rfloor  
\end{equation}
 as claimed.

Now we will prove the statement $|E(G)|-|E(G^{(2)})|\leq\left\lfloor \frac{|V(G)|}{2}\right\rfloor$ for general graphs using induction on the number of vertices.
It holds trivially for $n=1,2$.

Assume that the statement holds for all graphs with at fewer than $n$ vertices.
Consider a graph $G$ with $n$ vertices.
If $G$ is a forest, then the statement holds.
Otherwise, $G$ contains a cycle.
Choose $C_g$ to be a minimal cycle in $G$, i.e. one with no chords.
If $G=C_g$, then $\E(C_g)-\E(C_g^{(2)})=0$ if $g\not=4$ or $2$ if
$g=4$. The statement holds.

Now assume  $V(C)\subsetneq V(G)$.
Let $V_1:=V(C)=\{x_1,x_2,...,x_{g}\}$, where $x_{i}$ is adjacent to
$x_{i+1}$, and let $V_2:=V(G)\setminus V_1=\{v_1,v_2,...,v_{n-g}\}$.

The edges of $G$ can be partitioned into three parts: the induced graph $G[V_1]=C_g$, the induced graph $G[V_2]$, and the bipartite graph $G[V_1,V_2]$. 
Similarly, the edges of $G^{(2)}$ can be partitioned into three parts: $G^{(2)}[V_1]$, the induced graph $G^{(2)}[V_2]$, and the bipartite graph
$G^{(2)}[V_1,V_2]$. 
Now we compare term by term.

\begin{enumerate}
\item Note $|E(G^{(2)}[V_1])|\geq |E(C_g^{(2)})|$, and $|E(C_g^{(2)})|=g$ if $g\neq 4$ or $2$.
We have
\begin{equation} \label{eq:5.3}
	|E(G[V_1])|-|E(G^{(2)}[V_1])|\leq |E(C_g)|-|E(C_g^{(2)})|\leq \left\lfloor \frac{g}{2}\right\rfloor.  
\end{equation}

 \item By inductive hypothesis, we have $|E(G[V_2])|-|E((G[V_2])^{(2)})|\leq \left\lfloor \frac{n-g}{2}\right\rfloor$. 
  Combining with the fact $|E(G^{(2)}[V_2])\geq |E((G[V_2])^{(2)})|$, we have
\begin{equation} \label{eq:5.4}
	|E(G[V_2]))|-|E(G^{(2)}[V_2])|\leq \left\lfloor \frac{n-g}{2}\right\rfloor.    
\end{equation}
 
\item We claim $|E(G[V_1,V_2])| \leq |E(G^{(2)}[V_1,V_2])|$.
We define a map \[f\colon E(G[V_1,V_2]) \to E(G^{(2)}[V_1,V_2])\] as follows.
For any edge $x_iv\in E(G)$ with $v\in V_2$ and $x_i\in V_1$, define $f(vx_i)=vx_{i+1}$ (with the convention $x_{g+1}=x_1$).
Since $x_iv\in E(G)$ and $x_ix_{i+1}\in E(G)$, we have $vx_{i+1}\in E(G^{(2)})$.
The map $f$ is well-defined. 
We also observe that $f$ is an injective map. 
Thus 
\begin{equation}
  \label{eq:5.5}
|E(G[V_1,V_2])| \leq |E(G^{(2)}[V_1,V_2])|.  
\end{equation}
\end{enumerate}
Combining equations \eqref{eq:5.3}, \eqref{eq:5.4},  and \eqref{eq:5.5}, we get 
\[|E(G)|-|E(G^{(2)})|\leq \left\lfloor\frac{n-g}{2}\right\rfloor + \left\lfloor\frac{g}{2}\right\rfloor \leq \left\lfloor\frac{n}{2}\right\rfloor.\]

The inductive step is finished.

Now we check when equality holds.
It is straightforward to verify the sufficient condition; we omit the computation here.

Now we prove the necessary condition.
Assume that $G$ has $k+1$ connected components $G_1, G_2,\ldots, G_{k+1}$. 
Then we have
\[|E(G)|-|E(G^{(2)})|\leq \sum_{i=1}^{k+1} (|E(G_i)|-|E(G^{(2)}_i)|) \leq \sum_{i=1}^{k+1} \left\lfloor \frac{|V(G_i)|}{2} \right\rfloor \leq \left\lfloor \frac{n}{2}\right\rfloor.\]
If equality holds, then all but possibly one component has an even number of vertices.
It remains to show each component is a balanced complete bipartite graph.

Without loss of generality, we assume $G$ is connected.
If $G$ is a tree, then equality in Equation \eqref{eq:5.2} either forces the degree of every vertex to be $1$, or all the degrees are $1$ with a single exceptional vertex of degree $2$.
Since $G$ is assumed to be connected, $G$ is either $P_2=K_{1,1}$ or $P_3=K_{1,2}$.

Suppose that $G$ contains cycles, and the equalities hold in Equations \eqref{eq:5.3}, \eqref{eq:5.4}, and \eqref{eq:5.5}.
First we show that $C_4$ is the only possible chordless cycle in $G$.
Suppose not; let $C_g$ ($g\not=4$) be a cordless cycle.
We have $|E(C_g)|-|E(C^{(2)}_g)|=0$; which contradicts the assumption that equality holds in Equation \eqref{eq:5.3}.
Thus $G$ is a bipartite graph. 
Furthermore,the equality in \eqref{eq:5.4} forces each vertex $v$ to be connected to at least $2$ vertices of $C_4$.
Hence $G$ is 2-connected.
Now $G$ must be a complete bipartite graph. 
Otherwise, say $uv$ is a nonedge crossing the partite sets.
Since $G$ is $2$-connected, there exists a cycle containing both $u$ and $v$.
Let $C$ be such a cycle with minimum length; $C$ is cordless but not a $C_4$.
Contradiction.
Finally we show $G=K_{st}$ is balanced.
Note that
\[|E(G)|-|E(G^{(2)})|=st-{s\choose 2}-{t\choose 2}=\frac{n}{2}-\frac{(s-t)^2}{2}\leq \left\lfloor \frac{n}{2}\right\rfloor.\]
The equality holds only if $|s-t|\leq 1$. 
So $G$ is balanced.
\end{proof}

{\bf Proof of Theorem \ref{t:sd}:} 
We will prove by contradiction.
Let $R:=R(H)=R(H')$ be the common set of  edge types of $H$ and $H'$.
Suppose that $H'$ is not degenerate, then $\pi(H')>|R|-1 +\epsilon$ for some $\epsilon>0$.
Thus, there exists an $n_0$ satisfying $\pi_n(H')> |R|-1 +\epsilon/2$ for any $n\geq n_0$.
Let $G_n^R$ be a $H'$-free hypergraph with $\pi_n(G)> |R|-1 +\epsilon/2$.
Define a new hypergraph $G'_n$ over the same vertex set of $G$ with a new edge set
$E(G'_n)=E(G_n)\setminus E(G_n^{2}) \cup E((G_n^{2})^{(2)})$.
The hypergraph $G'_n$ is obtained for $G_n$ by replacing all $2$-edges by the edges in its square graph while keeping other type of edges.
By Lemma \ref{l:square}, we have
\begin{equation}
  \label{eq:6}
  \pi_n(G'_n)\geq \pi_n(G) - \frac{\lfloor
    \frac{n}{2}\rfloor}{{n\choose 2}}
\geq |R|-1 +
\epsilon/2 -\frac{1}{n}.
\end{equation}

Suppose that $H$ has $t$ $2$-edges.
Since $H$ is degenerate, so is the blowup hypergraph $H(t+1)$.
For sufficiently large $n$, $G'_n$ contains a subhypergraph $H(t+1)$.
By the definition of $G'$, for every copy of $H\subseteq H(t+1)$ and every $2$-edge $u_iv_i$
(for $1\leq i\leq t$) of $H$, there exists a vertex $x_i:=x_i(u_i,v_i)$ satisfying $u_ix_i$ and $v_ix_i$ are $2$-edges of $G$.
Our goal is to force that $x_1,x_2,\ldots, x_t$ are distinct from the vertices of $H$ and from each other.
This can be done by a greedy algorithm.
Suppose that the vertices of $H$ are listed by $y_1,y_2,y_3,\ldots,$ and so on.
Each vertex has $y_i$ has $t+1$ copies in $H(t+1)$. 
For $i=1,2,3,\ldots$, select a vertex $y_i'$ from the $t+1$ copies of $y_i$ so that $y_i'$ is not the same vertex as $x_j(u_j,v_j)$
for some $2$-edge $u_jv_j$ where $u_j, v_j$ have been selected.
This is always possible since $H$ has only $t$ $2$-edges.
Thus, we found a copy of $H'$ as a subgraph of $G$. Contradiction!
\hfill $\square$

It remains an open question to classify all non-degenerate hypergraphs.

In the remainder of this section, we generalize the following theorem due to 
Erd\H{o}s \cite{Erdos64} on the Tur\'an density of complete $k$-partitite $k$-uniform hypergraphs.

{\bf Theorem (Erd\H{o}s \cite{Erdos64}):}
{\rm
  Let $K^{(k)}_k(s_1,\ldots, s_k)$ be the complete $k$-partite
  $k$-uniform hypergraph with partite sets of size $s_1,\ldots,
  s_k$. Then any $K^{(k)}_k (s_1, \ldots, s_k)$-free $r$-uniform hypergraph
can have at most $O(n^{k-\delta})$ edges, where $\delta = \left(\prod^{k-1}_{i=1}
    s_i\right)^{-1}$.
}

We have the following theorem.
\begin{theorem} \label{tflag}
Let $L(s_1,s_2,\ldots, s_{v(L)})$ be a blowup of a flag $L^R$,
we have
$$\pi_n(L(s_1,s_2,\ldots, s_{v(L)}))=r-1+O(n^{-\delta}),$$
where $\delta= \frac{\max\{s_i\colon 1\leq i\leq v(L)\}}{\prod_{i=1}^{v(L)}s_i}$.
\end{theorem}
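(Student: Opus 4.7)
The lower bound $\pi_n(L(s_1,\ldots,s_{v(L)})) \geq r - 1$ is immediate from item~1 of Proposition~\ref{p1}, so the content of the theorem is the upper bound $\pi_n(L(s_1,\ldots,s_{v(L)})) \leq r - 1 + O(n^{-\delta})$. My plan is to lift Erd\H{o}s's 1964 counting argument for complete $k$-partite $k$-uniform hypergraphs to the level of labeled copies of the flag~$L$, exploiting the fact that such labeled copies live in $V(G)^{v(L)}$ as tuples with pairwise distinct coordinates.

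Fix an $R$-graph $G$ on $n$ vertices that contains no copy of $L(s_1,\ldots,s_{v(L)})$ and set $a := h_n(G) - (r-1)$; the goal is to show $a = O(n^{-\delta})$. Equation~\eqref{eq:ef} applied to the flag $L$ gives $h_n(G) = \E(X)$, where $X$ is the number of edges of $G$ hit by the random embedded flag $\sigma(L)$. Since $X \leq r$ always,
$$r - 1 + a = \E(X) \leq (r-1)\bigl(1-\Pr(X=r)\bigr) + r\,\Pr(X=r),$$
so $\Pr(X=r) \geq a$. As $\{X=r\}$ is precisely the event $\sigma(L) \subseteq G$, the number of injections $f\colon V(L)\hookrightarrow V(G)$ with $f(L)\subseteq G$ is at least $a\,n!/(n-v(L))! \geq c_0\, a\, n^{v(L)}$ for a constant $c_0 = c_0(v(L))$ and $n$ large. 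Viewing each such $f$ as an ordered $v(L)$-tuple of pairwise distinct vertices of $G$ produces a set $T \subseteq V(G)^{v(L)}$ with $|T| \geq c_0\, a\, n^{v(L)}$.

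The core step is an \emph{ordered box lemma}: if $T \subseteq V(G)^{v(L)}$ consists of tuples with pairwise distinct coordinates and $|T| \geq C\, n^{v(L)-\delta}$ for a sufficiently large constant $C = C(L,s_1,\ldots,s_{v(L)})$, then there exist $V_1',\ldots,V_{v(L)}' \subseteq V(G)$ with $|V_i'| = s_i$ and $\prod_{i=1}^{v(L)} V_i' \subseteq T$. I would prove this by the standard Erd\H{o}s convexity-plus-pigeonhole argument: assume without loss of generality that $s_{v(L)} = \max_i s_i$ (so $\delta = 1/\prod_{i<v(L)} s_i$), let $d(v_1,\ldots,v_{v(L)-1})$ denote the number of $v$ with $(v_1,\ldots,v_{v(L)-1},v) \in T$, and apply Jensen's inequality to
$$\sum_{(v_1,\ldots,v_{v(L)-1})} \binom{d(v_1,\ldots,v_{v(L)-1})}{s_{v(L)}} = \Omega\!\left(n^{v(L)-1+s_{v(L)}(1-\delta)}\right).$$
Pigeonholing over the $s_{v(L)}$-subsets of $V(G)$ then produces an $s_{v(L)}$-set $V_{v(L)}'$ that is a common right-neighbor of $\prod_{i<v(L)} s_i$ many $(v(L)-1)$-tuples; iterating the same argument coordinate by coordinate on the resulting lower-dimensional structure recovers $V_{v(L)-1}',\ldots,V_1'$ in order. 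Because every tuple in $T$ has distinct coordinates, any overlap among the $V_i'$ would force a repeated-entry tuple into $T$; hence the output sets are automatically pairwise disjoint, requiring no separate cleanup step.

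Applying the lemma with $a$ of order $n^{-\delta}$ (specifically $a \geq 2C n^{-\delta}/c_0$) gives $\prod_i V_i' \subseteq T$; then for any edge $F$ of $L$ and any $(u_i)_{i\in F} \in \prod_{i\in F} V_i'$, extending arbitrarily to a full tuple of $T$ shows $\{u_i : i \in F\} \in E(G)$. This exhibits a copy of $L(s_1,\ldots,s_{v(L)})$ in $G$, contradicting our assumption, so $a \leq 2Cn^{-\delta}/c_0 = O(n^{-\delta})$. The main obstacle is the ordered box lemma itself, specifically packaging Erd\H{o}s's convexity/pigeonhole iteration so that the parts emerge in the prescribed sizes $(s_1,\ldots,s_{v(L)})$ and so that the exponent telescopes to exactly $\delta = \max_i s_i/\prod_i s_i$; the distinct-coordinate hypothesis on $T$, a free byproduct of working with injective embeddings, is what lets the disjointness of the $V_i'$ fall out for free.
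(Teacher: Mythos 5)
Your reduction to the ``ordered box lemma'' is correct and parallels what the paper does: the bound $\E(X)\le r-1+\Pr(X=r)$ turns a Lubell excess $a$ into at least $c_0\,a\,n^{v(L)}$ ordered embeddings of $L$, and your observation that every tuple in $T$ has distinct coordinates yields the pairwise disjointness of the extracted parts automatically --- a genuinely cleaner device than the paper's random $m$-partition of $V(G)$. The gap is in the proof of the box lemma itself: your iteration runs in the wrong direction. Put $m=v(L)$ and order so that $s_m=\max_i s_i$; you pigeonhole on coordinate $m$ first and then work down to $V_1'$. But one Jensen-plus-pigeonhole step on coordinate $m$ leaves a surviving set of $(m-1)$-tuples of size only about $C^{s_m}n^{m-1-s_m\delta}$, and since $s_m\delta = s_m/\prod_{i<m}s_i$ the exponent $m-1-s_m\delta$ need not be nonnegative: already for $m=2$, $s_1=1$, $s_2\ge 2$ it equals $1-s_2<0$, so the claim that $V_m'$ is a common right-neighbor of $\prod_{i<m}s_i$ tuples fails. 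Continuing in decreasing order the accumulated deficit becomes $s_m\delta,\ s_{m-1}s_m\delta,\ \dots$, reaching $(\prod_{j\ge 2}s_j)\delta=s_m/s_1$ after the step that extracts $V_2'$; this exceeds $1$ unless $s_1=s_m$, so the exponent of $n$ goes negative before the iteration terminates and does \emph{not} telescope to $\delta$ as you assert.

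The correct telescoping requires pigeonholing in \emph{increasing} order of part size: with $s_1\le\cdots\le s_m$, extract $V_1',\dots,V_{m-1}'$, so that after step $l$ the deficit is $(\prod_{j\le l}s_j)\delta = 1/(s_{l+1}\cdots s_{m-1})\le 1$, hitting exactly $1$ at $l=m-1$. This leaves a one-coordinate survivor set of size at least a constant times $C^{\prod_{j<m}s_j}$; choose $C$ large enough to make it $\ge s_m$ and take any $s_m$ of its elements as $V_m'$. The largest coordinate must be the one that is \emph{not} pigeonholed --- exactly as in the paper's proof, where it is handled by the degree bound $d_S\le s_m$, and as in Erd\H{o}s's original argument. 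With the order reversed, the rest of your proposal, including the distinct-coordinate disjointness argument, appears sound.
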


Using the concept of $H$-density, we can say a lot more about avoiding
a blowup of any hypergraph $H$.

Given two hypergraphs $H$ and $G$ with the same edge-type $R(H)=R(G)$,
the  {\em density} of $H$ in $G$, denoted by $\mu_H(G)$,
is defined as the probability
that a random injective map $f\colon V(H)\to V(G)$ satisfies
$H\stackrel{f}{\hookrightarrow} G$ (i.e. $f$ maps $H$ to an ordered copy of $H$ in $G$).
We have the following theorem.

\begin{theorem}
  For a fixed hypergraph $H$ on $m$ vertices and $m$ positive integers
  $s_1,s_2,\ldots, s_m$, let $H(s_1,s_2,\ldots, s_m)$ be the blowup of
  $H$. For sufficiently large $n$ and any hypergraph $G$ on $n$
  vertices with edge type $R(G)=R(H)$, if $G$ contains no subgraph
  $H(s_1,s_2,\ldots, s_m)$, then $$\mu_H(G)=O(n^{-\delta}),$$
where $\delta= \frac{\max\{s_i\colon 1\leq i\leq m\}}{\prod_{i=1}^{m}s_i}$.
\end{theorem}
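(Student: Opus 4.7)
The plan is to reduce to the partite, ordered form of Erd\H{o}s's theorem on complete multi-partite $m$-uniform hypergraphs. The argument parallels the auxiliary-hypergraph step in the proof of Theorem \ref{blowup}, but I would work with a \emph{partite} auxiliary hypergraph from the start so that the exponent $\delta$ can be read off directly, avoiding the Ramsey-type overhead that appears in the blow-up proof.

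First I would relabel $V(H)=\{w_1,\ldots,w_m\}$ so that $s_m=\max_i s_i$; then $\delta=1/\prod_{i=1}^{m-1}s_i$ is exactly the exponent in Erd\H{o}s's theorem applied to $K_m^{(m)}(s_1,\ldots,s_m)$. Next, define an $m$-partite, $m$-uniform auxiliary hypergraph $U^\ast$ whose parts $V_1^\ast,\ldots,V_m^\ast$ are $m$ disjoint labeled copies of $V(G)$, and whose edges are tuples $(v_1,\ldots,v_m)\in V_1^\ast\times\cdots\times V_m^\ast$ for which the $v_i$'s are distinct in $V(G)$ and the map $w_i\mapsto v_i$ is an embedding $H\hookrightarrow G$. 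By construction $|E(U^\ast)|$ equals the number $N_H(G)$ of ordered embeddings of $H$ in $G$, and $\mu_H(G)=N_H(G)/\bigl(n(n-1)\cdots(n-m+1)\bigr)$, so it suffices to prove $|E(U^\ast)|=O(n^{m-\delta})$.

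The key observation is that a partite copy of $K_m^{(m)}(s_1,\ldots,s_m)$ inside $U^\ast$, with its $i$-th part $A_i\subseteq V_i^\ast$ of size $s_i$, automatically has the $A_i$'s pairwise disjoint in $V(G)$ (any common vertex would produce a transversal with a repetition, which cannot be an edge of $U^\ast$); and by the definition of $U^\ast$ every transversal $v_1\in A_1,\ldots,v_m\in A_m$ is an embedding of $H$ via $w_i\mapsto v_i$. This is exactly a copy of $H(s_1,\ldots,s_m)$ in $G$. So the hypothesis that $G$ is $H(s_1,\ldots,s_m)$-free forces $U^\ast$ to be partitely $K_m^{(m)}(s_1,\ldots,s_m)$-free, and the partite form of Erd\H{o}s's theorem (obtained from the version quoted above by the standard random balanced-partition trick, losing only a constant factor) yields $|E(U^\ast)|=O(n^{m-\delta})$ with $\delta=1/\prod_{i<m}s_i=\max_i s_i/\prod_i s_i$. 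Dividing by $n(n-1)\cdots(n-m+1)$ gives $\mu_H(G)=O(n^{-\delta})$.

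The main obstacle is the ordered nature of the target $H(s_1,\ldots,s_m)$: the vertex $w_i$ must be blown up into a set of size \emph{exactly} $s_i$, so an unordered $m$-uniform auxiliary hypergraph (as in Theorem \ref{blowup}) would force an additional Ramsey step and degrade the exponent. Carrying out the argument on the ordered partite hypergraph $U^\ast$ from the outset is what lets Erd\H{o}s's exponent be plugged in directly; the only extra care needed is to verify the partite version of Erd\H{o}s's theorem with the same exponent, which is standard.
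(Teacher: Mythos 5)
Your argument is correct, and it takes a genuinely different (and cleaner) route than the paper's. The paper's proof essentially re-derives Erd\H{o}s's bound from scratch in this setting: it fixes a random $m$-partition of $V(G)$, counts transversal ordered copies of $H$, and proves an inductive Claim (a) — which is really the Kővári--Sós--Tur\'an/Erd\H{o}s double-counting step carried out by hand — culminating in a contradiction with the degree bound $d_S\leq s_m$. Your proof instead packages the ordered embeddings of $H$ as the edge set of an explicit $m$-partite $m$-uniform hypergraph $U^\ast$ (with parts being labeled copies of $V(G)$), observes that a \emph{partite aligned} copy of $K_m^{(m)}(s_1,\ldots,s_m)$ in $U^\ast$ has automatically disjoint classes and hence yields $H(s_1,\ldots,s_m)\subseteq G$, and then cites Erd\H{o}s's theorem in its ordered partite form as a black box. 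This avoids the paper's random partition and the inductive Claim (a) entirely. The one point you should make fully explicit is that what you need is precisely the \emph{ordered partite} form of Erd\H{o}s's theorem (forbidding only copies with the $i$-th class of size exactly $s_i$ inside $V_i^\ast$, not arbitrary isomorphic copies), since a ``misaligned'' copy of $K_m^{(m)}(s_1,\ldots,s_m)$ in $U^\ast$ would give a differently-sized blowup in $G$ and is not a priori forbidden; but, as you note, that ordered partite form is standard and carries the same exponent $\delta=1/\prod_{i\ne\arg\max}s_i=\max_i s_i/\prod_i s_i$, so the reduction is sound. In short, the paper is self-contained while your version is shorter and more modular at the cost of invoking the partite Erd\H{o}s theorem.
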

{\bf Proof:}  We will prove by contradiction.
We assume $\mu_H(G)\geq C n^{-\delta}$ for some constant $C$ to be chosen later.
By reordering the vertices of $H$, we can assume $s_1\leq s_2\leq \cdots \leq s_m$.
Without loss of generality, we assume $n$ is divisible by $m$. 
Consider a random $m$-partition of $V(G)=V_1\cup V_2\cup \cdots \cup V_{m}$ where each part has size $\frac{n}{m}$. 
For any $m$-set $S$ of $V(G)$ ,  we say $S$ is a {\em transversal} (with respect to this partition of $V(G)$)
if $S$ intersects each $V_i$ exactly once.
The probability that $S$ is transversal is given by $\frac{(\frac{n}{m})^{m}}{{n\choose m}m!}$.

We say, an ordered copy $f(H)$ is a {\em transversal} if $f(V(H))$ is a transversal.
By the definition of $\mu_H(G)$, $G$ contains $\mu_H(G){n\choose m}m!$ ordered copies of $H$.
Thus, the expected number of transversal ordered copies of $H$ is 
\[\mu_H(G)\left(\frac{n}{m}\right)^{m}\geq \frac{C}{m^m} n^{m-\delta}.\]
There exists a partition so that the number of 
crossing maximum chains in $E(H)$
is at least $Cm^{-\delta} n^{m-{\delta}}$.
Now we fix this partition $[n]=V_1\cup \cdots \cup V_m$.

For $t_i \in \{1, s_i\}$ with $i=1,2\ldots, m$, we would like to
estimate the number of monochromatic (ordered) copies in $H'$, denoted
by $f(t_1, t_2, \ldots, t_{m})$, of $H(t_1,\ldots, t_{m})$ so
that the first $t_1$ vertices in $V_{\tau_1}$, the second $t_2$
vertices in $V_{\tau_2}$, and so on.

{\bf Claim a:} For $0\leq l\leq m-1$, we have
$$f(s_{1},\ldots, s_{l}, 1,\ldots, 1)\geq 
\left(1+o(1)\right)
\frac{\left(Cn^{-\delta}\right)^{\prod_{j=1}^l s_{j}}} 
{\prod_{j=1}^l \left(s_{j}! \right)^{\prod_{u=j+1}^ls_{u}}}
\left(\frac{n}{m}\right)^{m -l+ \sum_{j=1}^ls_j}.
$$ 
We prove claim (a) by induction on $l\in [0,m]$.
For the initial case $l=0$, the claim is trivial since 
$f(1,1,\ldots, 1)$ counts the number 
of transversal ordered copies of $H$.
 We have
$$f(1,1,\ldots,1)\geq Cn^{-\delta} \left(\frac{n}{m}\right)^{m}.$$
The statement holds for $l=0$.
Now we assume claim (a) holds for $l>0$. 
Consider the case $l+1$, for some $l\geq 0$. 
For any $S\in {V_{1}\choose s_{1}}\times \cdots \times {V_{l}\choose s_{}}\times V_{l+2}\times \cdots \times V_{m}$,
let $d_S$ be the number of vertices $v$ in $V_{l+1}$ such that the induced subgraph
of $H'$ on $S\times \{v\}$ is $H(s_{1},\ldots, s_{l}, s_{l+1},\ldots, 1)$. 
We have
\begin{align}
\label{eq:si}
f(s_1,\ldots, s_l,1,1, \ldots, 1) &= \sum_{S}d_S; \\
f(s_1,\ldots, s_l, s_{l+1},1,\ldots,1)&=\sum_{S} {d_S\choose s_{l+1}}.  
\label{eq:sil}
\end{align}

Let $\bar d_l$ be the average of $d_S$. 
By equation \eqref{eq:si} and the inductive hypothesis, we have
\begin{equation}
  \label{eq:dl}
\bar d_l\geq \frac{\sum_{S} d_S}{(\frac{n}{m})^{m -l-1+ \sum_{j=1}^ls_j}}
\geq  \left(1+o(1)\right)
 \frac{(\frac{n}{m})\left(Cn^{-\delta}\right)^{\prod_{j=1}^l s_j}} 
{\prod_{j=1}^l \left(s_j!\right)^{\prod_{u=j+1}^ls_u}}.
\end{equation}
Applying the convex inequality, we have
\begin{align*}
f(s_1,\ldots, s_l, s_{l+1},1,\ldots,1) &=\sum_{S} {d_S\choose s_{l+1}} \\
                                       &\geq \left(\frac{n}{m}\right)^{m-1 +\sum_{j=1}^l(s_j-1)} {\bar d_l\choose s_{l+1}} \\
                                       &=\left(1+O\left(\frac{1}{\bar d_l}\right)\right)\frac{\bar d_l^{s_{l+1}}}{s_{l+1}!} \left(\frac{n}{m}\right)^{m +\sum_{j=1}^{l+1}(s_j-1)}.
\end{align*}
Combining with equation \eqref{eq:dl}, we get
\[f(s_1,\ldots, s_l, s_{l+1},1,\ldots,1) \geq \left(1+o(1)\right) \frac{\left(Cn^{-\delta}\right)^{\prod_{j=1}^{l+1} s_j}} 
{\prod_{j=1}^{l+1} \left(s_j!\right)^{\prod_{u=j+1}^{l+1}s_u}} \left(\frac{n}{m}\right)^{m -l-1+ \sum_{j=1}^{l+1}s_j}.\]
In the last step, we used the assumption $s_{m}:=\max\{s_i\colon 1\leq i\leq m\}$.

Applying claim (a) with $l=m-1$, we get
\begin{align} \nonumber
f(s_1,s_2,\ldots, s_{m-1},1) &\geq
\left(1+o(1)\right)
\frac{\left(Cn^{-\delta}\right)^{\prod_{j=1}^{m-1} s_j}} 
{\prod_{j=1}^{m-1} \left(s_j!\right)^{\prod_{u=j+1}^{m-1}s_u}}
\left(\frac{n}{m}\right)^{1+ \sum_{j=1}^{m-1}s_j} \\
&=\left(1+o(1)\right) \frac{m^{-1}C^{\prod_{j=1}^{m-1} s_j} (\frac{n}{m})^{\sum_{j=1}^{m-1}s_j}}
{\prod_{j=1}^{m-1} \left(s_j! \right)^{\prod_{u=j+1}^{m-1}s_u}}.
\label{eq:lb}
\end{align}

For any $S\in {V_1\choose s_1}\times \cdots \times {V_{m-1}\choose s_{m-1}}$, 
let $d_S$ be the 
number of vertices $v$ in $V_{l+1}$ such that the edges in the induced subgraph
of $H'$ on $S\times \{v\}$ are monochromatic. Since $H'$ contains no
monochromatic copy of $C^R(s_1,\ldots, s_{m})$, we have
$d_S\leq s_{m}$. It implies
\begin{equation}
  \label{eq:ub}
f(s_1,s_2,\ldots, s_{m-1},1)=\sum_S d_S\leq s_{m} \left(\frac{n}{m}\right)^{\sum_{j=1}^{m-1}s_j}.  
\end{equation}

Choosing $C$ so that $C>\left((ms_{m})^{\frac{1}{\prod_{u=1}^{m}s_u}}\right)\cdot
\prod_{j=1}^{m-1} \left(s_j!\right)^{\frac{1}{\prod_{u=1}^{j}s_u}}$,
equations \eqref{eq:lb} and \eqref{eq:ub} contradict each other.
\hfill $\square$

{\bf Proof of Theorem \ref{tflag}:}
Consider a hypergraph $G:=G^R_n$ with \[h_n(G)=r-1+Cn^{-\delta}.\]
Let $r=|R|$.  
It suffices to show that $\mu_H(G)\geq C'n^{-\delta}$.

Given a random permutation $\sigma$,
let $X$ be the number of edges on a random full chain $\sigma(L)$.
By the definition of the Lubell function, we have $h_n(G)=\E(X)$.
Note $X$ only takes integer values $0,1,\ldots, r$.
Since $\E(X)>r-1$, there is non-zero probability that $X=r$.
In fact, we have
\begin{align*}
\E(X)&=\sum_{i=0}^r i\Pr(X=i)\\
&\leq r\Pr(X=r) + (r-1)(1-\Pr(X=r))\\
&=r-1 + \Pr(X=r).  
\end{align*}

Thus, we get 
\begin{equation}
  \label{eq:xr}
  \Pr(X=r)\geq \frac{C}{n^{\delta}}.
\end{equation}

Every flag $\sigma(L)$ contributes an equal share of the probability of the event that $X=r$,
namely,
\begin{equation}
  \label{eq:pchain}
  \frac{|Aut(L)|}{{n\choose v(L)}v(L)!}.
\end{equation}
Here $Aut(L)$ is the automorphism of $L$.
Thus, the number of such flags is at least
\begin{equation}
  \label{eq:nchain}
\frac{C}{|Aut(L)|n^{\delta}}{n\choose v(L)} v(L)!.
\end{equation}
It follows that
$\mu_H(G)\geq Cn^{-\delta}$.
\hfill $\square$

\section{Suspensions}
\begin{definition}
The {\em suspension} of a hypergraph $H$, denoted $S(H)$,
is the hypergraph with $V=V(H) \bigcup \{\ast\}$ where $\{\ast\}$ is an element not in $V(H)$,
and edge set $E=\{F\bigcup \{\ast\}\colon F\in E(H)\}$.
We write $S^{t}(H)$ to denote the hypergraph obtained by iterating the suspension operation $t$-times, i.e. $S^{2}(H)=S(S(H))$ and $S^{3}(H)=S(S(S(H)))$, etc.
\end{definition}
%
%
In this section we will investigate the relationship between $\pi(H)$ and $\pi(S(H))$ and look at limits such as $\lim_{t\to \infty}\pi(S^{t}(H))$.

\begin{definition}
Given a graph $G$ with vertex set $v_1,...,v_n$ the {\em link} hypergraph $G^{v_i}$ is the hypergraph with vertex set $V(G)\setminus \{v_i\}$ and
edge set $E=\{F\setminus \{v_i\}\colon v_i\in F \text{ and } F\in E(G)\}$.
\end{definition}

\begin{prop}
For any hypergraph $H$ we have that $\pi(S(H))\leq \pi(H)$.
\end{prop}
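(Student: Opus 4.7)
The plan is to use a link-counting argument. For any vertex $v$ of a hypergraph $G$ with $R(G)\subseteq R(S(H))$, observe that if the link $G^v$ contains a copy of $H$, then combining with $v$ (playing the role of the new vertex $\ast$) yields a copy of $S(H)$ in $G$. So if $G$ is $S(H)$-free, every link $G^v$ is $H$-free. Moreover, since each edge of $G$ has size $k+1$ for some $k\in R(H)$, we have $R(G^v)\subseteq R(H)$, so $G^v$ is a legitimate test hypergraph for $\pi_{n-1}(H)$.

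The key identity I would establish is
\[
\sum_{v\in V(G)} h_{n-1}(G^v) = n\, h_n(G).
\]
This is a routine double-counting: each edge $F\in E(G)$ with $|F|=k$ appears in $G^v$ for exactly the $k$ vertices $v\in F$, each time contributing $1/\binom{n-1}{k-1}$; summing and using the identity $k/\binom{n-1}{k-1}=n/\binom{n}{k}$ gives the claim. Since every $G^v$ is an $H$-free hypergraph on $n-1$ vertices with $R(G^v)\subseteq R(H)$, we conclude
\[
h_n(G) = \frac{1}{n}\sum_{v\in V(G)} h_{n-1}(G^v) \leq \pi_{n-1}(H).
\]
Taking the maximum over all $S(H)$-free $R(S(H))$-graphs $G$ on $n$ vertices yields $\pi_n(S(H))\leq \pi_{n-1}(H)$, and letting $n\to\infty$ gives $\pi(S(H))\leq \pi(H)$.

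This proof has essentially no obstacle: the combinatorial identity is elementary, and the link observation is immediate from the definition of suspension. The only thing to be slightly careful about is ensuring the edge-type compatibility ($R(G^v)\subseteq R(H)$) so that bounding $h_{n-1}(G^v)$ by $\pi_{n-1}(H)$ is legitimate — but this follows automatically from $R(G)\subseteq R(S(H))=\{k+1:k\in R(H)\}$.
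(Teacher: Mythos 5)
Your proposal is correct and is essentially the same argument as in the paper: both pass to the link hypergraphs $G^v$, note they are $H$-free when $G$ is $S(H)$-free, and use the double-counting identity $\sum_v h_{n-1}(G^v)=n\,h_n(G)$ to deduce $\pi_n(S(H))\le\pi_{n-1}(H)$ before taking limits. Your explicit check of edge-type compatibility is a small point the paper leaves implicit, but the route is the same.
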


\begin{proof}
Let $G_{n}$ be a graph on $n$ vertices containing no copy of $S(H)$ such that $h_{n}(G_{n})=\pi_{n}(S(H))$.
Say $V(G_{n})=\{v_1,v_2,...,v_{n}\}$.  
Note that for any $v_i\in V(G_n)$, we have that Lubell value of the corresponding link graph is
\[h_{n-1}(G^{v_i}_{n})=\sum_{F\in G_n, v_i\in F} \frac{1}{\binom{n-1}{|F|-1}}.\]
Also, note that $G^{v_i}_n$ contains no copy of $H$.  If it did, then $S(H)\subset S(G^{v_i}_n)\subseteq G_n$; but $S(H)$ is not contained in $G_n$.
Thus $h_{n-1}(G^{v_i}_{n})\leq \pi_{n-1}(H)$.
We then have the following:
\begin{align*}
\pi_{n}(S(H)) &= h_n(G_n) \\
              &=\sum_{F\in E(G_n)} \frac{1}{\binom{n}{|F|}} \\
	      &=\sum_{F\in E(G_n)} \frac{1}{|F|} \sum_{v_i\in F} \frac{1}{\binom{n}{|F|}} \\
	      &=\sum_{i=1}^{n} \sum_{F\in E(G_n), v_i\in F} \frac{1}{|F|}\cdot \frac{1}{\binom{n}{|F|}} \\
	      &=\sum_{i=1}^{n} \sum_{F\in E(G_n), v_i\in F} \frac{1}{n}\cdot \frac{1}{\binom{n-1}{|F|-1}} \\
	      &=\frac{1}{n} \sum_{i=1}^{n} h_{n-1}(G^{v_i}_{n}) \\
	      &\leq \frac{1}{n} \sum_{i=1}^{n} \pi_{n-1}(H) \\
	      &=\pi_{n-1}(H).
\end{align*}
Thus, for any $n$, $\pi_{n}(S(H))\leq \pi_{n-1}(H)$; taking the limit as $n\to\infty$ we get the result as claimed.
\end{proof}

\begin{corollary}
  If $H$ is degenerate, so is $S(H)$.
\end{corollary}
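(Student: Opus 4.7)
The plan is to combine the just-proved proposition with Proposition \ref{p1}, item 1. The key observation is that the suspension operation shifts every edge type up by one but does not change the number of distinct edge types: if $R(H) = \{k_1, \ldots, k_r\}$, then $R(S(H)) = \{k_1+1, \ldots, k_r+1\}$, so $|R(S(H))| = |R(H)|$.

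Given this, I would proceed as follows. First, apply Proposition \ref{p1}(1) to $S(H)$ to obtain the universal lower bound $\pi(S(H)) \geq |R(S(H))| - 1 = |R(H)| - 1$. Next, apply the proposition immediately preceding this corollary to obtain $\pi(S(H)) \leq \pi(H)$. Finally, use the hypothesis that $H$ is degenerate, which by definition means $\pi(H) = |R(H)| - 1$. Chaining these three facts gives
\[
|R(S(H))| - 1 \;\leq\; \pi(S(H)) \;\leq\; \pi(H) \;=\; |R(H)| - 1 \;=\; |R(S(H))| - 1,
\]
forcing equality throughout, which is exactly the statement that $S(H)$ is degenerate.

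There is essentially no obstacle here; the corollary is a one-line consequence once the edge-type count is observed to be preserved by suspension. The only thing to be slightly careful about is confirming $|R(S(H))| = |R(H)|$, which follows immediately from the definition of $S(H)$ since adding the new vertex $\ast$ to every edge is a bijection on edge cardinalities.
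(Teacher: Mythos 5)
Your proof is correct and is exactly the intended one-line argument: suspension preserves $|R|$, the preceding proposition gives $\pi(S(H)) \leq \pi(H)$, and Proposition \ref{p1}(1) gives the matching lower bound $\pi(S(H)) \geq |R(S(H))| - 1$. The paper omits the proof of this corollary, but this is clearly the argument meant to be read off from the two surrounding results.
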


\begin{conjecture}
For all $H$, $\displaystyle \lim_{t\to\infty}\pi(S^{t}(H))=|R(H)|-1$.
\end{conjecture}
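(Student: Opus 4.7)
The sequence $\pi(S^t(H))$ is non-increasing in $t$ by the preceding proposition, and since suspension shifts every edge size up by one we have $|R(S^t(H))|=|R(H)|$, so Proposition~\ref{p1} gives $\pi(S^t(H))\ge |R(H)|-1$ for every $t$. Hence $L:=\lim_{t\to\infty}\pi(S^t(H))$ exists and satisfies $L\ge |R(H)|-1$; the whole content of the conjecture is the matching upper bound. For degenerate $H$ the statement is trivial, since the corollary to the suspension proposition passes degeneracy from $H$ to $S^t(H)$; the real content is the claim that \emph{every} $H$ becomes asymptotically degenerate under suspension.

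The plan is a proof by contradiction driven by the Supersaturation Lemma and Theorem~\ref{tflag}. Write $r=|R(H)|$, suppose $L\ge r-1+3\epsilon$ for some $\epsilon>0$, and for each $t$ pick $n_t$ large and an $S^t(H)$-free hypergraph $G_t$ on $n_t$ vertices with $h_{n_t}(G_t)\ge r-1+2\epsilon$. Let $X$ count the edges of $G_t$ hit by a uniformly random full chain on $[n_t]$; since $X\in\{0,1,\ldots,r\}$ and $\E(X)=h_{n_t}(G_t)\ge r-1+2\epsilon$, the same inequality $\E(X)\le r-1+\Pr(X=r)$ used inside the proof of Theorem~\ref{tflag} gives $\Pr(X=r)\ge 2\epsilon$, so the density of ordered chains $C^{R_t}$ (where $R_t:=R(S^t(H))$) inside $G_t$ is bounded below by $2\epsilon$ uniformly in $t$. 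By Theorem~\ref{tflag} every blowup $C^{R_t}(s,\ldots,s)$ is degenerate, so supersaturation yields a clean copy of $C^{R_t}(s,\ldots,s)$ inside $G_t$ for any fixed $s=s(\epsilon,v(H))$, provided $n_t$ is large enough. One would then complete the contradiction by embedding $S^t(H)$ into this blowup: place the $t$ suspension vertices $*_1,\ldots,*_t$ in the first $t$ blocks and distribute $V(H)$ across the remaining $v(H)$ blocks so that each $F\cup\{*_1,\ldots,*_t\}$ is realised as a blowup edge.

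The final embedding step is where I expect the real obstacle to live, and it is precisely where the chain strategy breaks down: a general $H$---for instance the triangle---need not embed into any blowup of a chain of type $R(H)$, so one cannot simply distribute $V(H)$ across the tail blocks of $C^{R_t}(s,\ldots,s)$. A natural remedy is to target, instead of a chain, some more flexible flag $L$ of type $R_t$ whose blowup contains $S^t(H)$, and to derive a positive $L$-density in $G_t$ rather than just a positive chain density; the difficulty is that a random full chain only witnesses chains, so a new probabilistic input is needed to detect other flags. One promising direction is to iterate a $2$-subdivision reduction in the spirit of Theorem~\ref{t:sd}, trading rigid edges of $H$ for ones that fit into a chain-like structure, and to argue that enough suspensions eventually convert any $H$ into a hypergraph contained in a flag-blowup. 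Fully resolving the conjecture contains the AIM conjecture $\pi(S^t(K_n^{(r)}))\to 0$ as a special case and will almost certainly require a genuinely new ingredient beyond supersaturation and blow-up.
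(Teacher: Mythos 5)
This statement is presented in the paper as a conjecture, not a theorem, so there is no proof on the paper's side to compare against; and, as you yourself acknowledge in your final paragraph, your writeup is not a proof either. The preliminary reductions you make are correct: $\pi(S^t(H))$ is non-increasing in $t$ by the proposition $\pi(S(H))\leq\pi(H)$, the lower bound $\pi(S^t(H))\geq|R(H)|-1$ follows from Proposition~\ref{p1} together with the fact that suspension preserves $|R(\cdot)|$, and therefore the limit $L$ exists with $L\geq|R(H)|-1$, so the entire content is the matching upper bound.

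Your plan, and your diagnosis of exactly where it stalls, line up precisely with the partial result the paper actually proves. The probabilistic estimate $\E(X)\leq r-1+\Pr(X=r)$ forcing a uniformly positive density of the chain $C^{R(S^t(H))}$ is the same input used inside the proofs of Theorem~\ref{tflag} and Theorem~\ref{t8}; feeding it through supersaturation and blowup yields a large blowup of a \emph{chain} inside the extremal $G_t$, and nothing more, because a random full chain can only witness chains. This suffices to produce a copy of $S^t(H)$ exactly when $H$ embeds in a blowup of a chain of type $R(H)$ (for $k$-uniform $H$, when $H$ is $k$-partite), and the triangle already breaks this. Theorem~\ref{t8} of the paper handles precisely that case, with the extra wrinkle of allowing added edges one level below $\min R(H)$, so your sketch essentially rediscovers the paper's partial theorem. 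Your ideas for pushing further---detecting a more flexible flag than the chain, or iterating a $2$-subdivision reduction in the style of Theorem~\ref{t:sd}---are reasonable directions, but nothing in the writeup closes the gap; the conjecture, including its special case $\lim_{t}\pi(S^t(K_n^{(r)}))=0$ from the AIM workshop, remains open.
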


To conclude our paper, we prove a special case of this conjecture. 

\begin{theorem}\label{t8}
Suppose that $H$ is a subgraph of the blowup of a chain.
Let $k_1$ be the minimum number in $R(H)$.
Suppose $k_1\geq 2$, and $H'$ is a new hypergraph obtained by adding finitely many edges of type
$k_1-1$ arbitrarily to $H$.
Then \[\lim_{t\to\infty} \pi(S^{t}(H'))=|R(H')|-1.\]
\end{theorem}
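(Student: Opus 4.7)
The lower bound $\pi(S^t(H')) \geq |R(H')|-1$ is immediate from Proposition~\ref{p1}, and the sequence $\pi(S^t(H'))$ is non-increasing in $t$ by the proposition opening Section 6. The plan is therefore to show that for every $\epsilon > 0$, $\pi(S^t(H')) \leq |R(H')|-1+\epsilon$ for all sufficiently large $t$.

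Fix $\epsilon > 0$ and, toward producing a copy of $S^t(H')$, assume $G$ is an $R(S^t(H'))$-graph on $n$ vertices with $h_n(G) \geq |R(H')|-1+\epsilon$. Writing $a_0 = k_1-1+t$ and $a_i = k_i+t$ for $i = 1,\ldots,r$, the decomposition $h_n(G)=h_n(G^{a_0})+h_n(G^{\geq a_1})$ together with the trivial bounds $h_n(G^{a_0}) \leq 1$ and $h_n(G^{\geq a_1}) \leq |R(H)|$ yields $h_n(G^{a_0}) \geq \epsilon$ and $h_n(G^{\geq a_1}) \geq |R(H)|-1+\epsilon$.

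Because $H \subseteq C^{R(H)}(s_*)$ for some $s_*$, the suspension $S^t(H)$ is a subgraph of the blowup $L_t(s_*,\ldots,s_*,1,\ldots,1)$, where $L_t = S^t(C^{R(H)})$ is the chain obtained by suspending $C^{R(H)}$ $t$ times, the first $k_r$ factors are $s_*$, and the last $t$ are $1$. By Theorem~\ref{tflag}, for any $s \geq s_*$ and $n$ sufficiently large, $G^{\geq a_1}$ contains many copies of $L_t(s,\ldots,s,1,\ldots,1)$. Each such copy designates a suspension set $P \subset V(G)$ of size $t$ and $k_r$ disjoint blocks $V_1,\ldots,V_{k_r}$ of size $s$ in $V(G)\setminus P$, such that every transversal $\{w_1,\ldots,w_{k_i}\} \in V_1\times\cdots\times V_{k_i}$ together with $P$ lies in $E(G^{a_i})$. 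Any embedding $\phi: V(H) \to V_1\sqcup\cdots\sqcup V_{k_r}$ consistent with $H \subseteq C^{R(H)}(s_*)$ then yields a copy of $S^t(H)$ with suspension vertices $P$. To upgrade to $S^t(H')$, we need each of the $m := |E(H')\setminus E(H)|$ added $(k_1-1)$-edges $e_j$ to satisfy $\phi(e_j) \in M_P$, where $M_P := \{F\setminus P : F\in E(G^{a_0}),\, P\subseteq F\}$ is the $(k_1-1)$-uniform link of $G^{a_0}$ at $P$.

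The plan is a coordinated probabilistic argument. Averaging gives $\E_P[|M_P|/\binom{n-t}{k_1-1}] = h_n(G^{a_0}) \geq \epsilon$, so a positive fraction of $t$-subsets have $|M_P| \geq (\epsilon/2)\binom{n-t}{k_1-1}$; by supersaturation for $L_t$, a positive fraction of $t$-subsets are suspension sets for $L_t$-blowups inside $G^{\geq a_1}$. With parameters chosen to have enough slack, the two fractions overlap and we may fix a $P$ satisfying both conditions. Given such $P$, we randomize further over the $L_t$-blowups with suspension set $P$; on average, $M_P$ has density $\geq \epsilon/2$ on each product $V_{b_{j,1}} \times\cdots\times V_{b_{j,k_1-1}}$ (where $b_{j,i}$ is the block of the $i$-th vertex of $e_j$ under $H \hookrightarrow C^{R(H)}(s_*)$). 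Then, for a uniformly random embedding $\phi$ of $H$ into the blocks, each condition $\phi(e_j) \in M_P$ holds with probability $\geq \epsilon/2$, and a union bound over $j=1,\ldots,m$, together with the $\Omega(s^{v(H)})$ total embeddings, produces a valid $\phi$ once $s$ is taken sufficiently large compared to $m$ and $\epsilon^{-1}$.

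The main obstacle is the coordination step: the same $P$ and block partition must simultaneously (a) support an $L_t$-blowup inside $G^{\geq a_1}$ and (b) leave $M_P$ dense on the specific block products dictated by the added edges. This is resolved by nesting the random choices with the scale hierarchy $t \ll s \ll n$. The hypothesis $k_1 \geq 2$ ensures $a_0 \geq t+1$, so that the $a_0$-edges carry nontrivial combinatorial structure, while the finiteness of $m$ makes the union bound over the added edges benign.
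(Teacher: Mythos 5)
Your setup is fine through the split of $h_n(G)$ into $h_n(G^{a_0}) \geq \epsilon$ and $h_n(G^{\geq a_1}) \geq |R(H)|-1+\epsilon$, but the ``coordination step,'' which you yourself flag as the main obstacle, is left as a genuine gap. You claim that a positive fraction of $t$-sets $P$ have $M_P$ dense, that a positive fraction serve as suspension sets of $L_t$-blowups in $G^{\geq a_1}$, and that ``the two fractions overlap.'' Neither of the last two points is established. Supersaturation yields many \emph{copies} of $L_t$ or of its blowup, not many \emph{distinct} suspension $t$-sets; the copies could in principle concentrate on few $P$'s, so the second fraction is not controlled. And two families in $\binom{[n]}{t}$ each of density on the order of $\epsilon/2$ can easily be disjoint, so the overlap needs an actual intersecting argument, not a scale hierarchy $t\ll s\ll n$. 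Finally, even given a good $P$, the global density $|M_P|/\binom{n-t}{k_1-1}\geq \epsilon/2$ does not by itself give density on the particular products $V_{b_{j,1}}\times\cdots\times V_{b_{j,k_1-1}}$ used in your union bound; that requires a further averaging over placements of the blowup that you have not set up.

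The paper's proof is engineered precisely to avoid this decoupling. It first shows that the full flag $C^{R'+t}$ (all $r+1$ levels) has density $>\epsilon/(r+1)$ in $G$, and then writes this density as an average over the $(k_1+t)$-edges $A_1$, with each summand a \emph{product} of (a) the conditional density of the upper chain $C^{R+t}$ through $A_1$ and (b) the fraction $d_-(A_1)/(k_1+t)$ of $(k_0+t)$-edges inside $A_1$. This pigeonholes a family $\A$ of ``doubly good'' $(k_1+t)$-edges: those with both (a) and (b) at least $\eta:=\epsilon/(2(r+1))$. A $k_1$-configuration $(S,A_1)$ then packages a $t$-set $S\subset A_1$ with $k_1$ vertices $i_j$ such that every $A_1\setminus\{i_j\}$ is a $(k_0+t)$-edge, and averaging over $S$ alone produces a single $t$-set whose link $G^S$ carries, by construction, both the dense chain structure above level $k_1$ and the needed $(k_1-1)$-level structure on each $k_1$-edge arising from a configuration. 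That coupling, made at the level of individual $(k_1+t)$-edges \emph{before} any $S$ is chosen, is exactly the ingredient your proposal tries to recreate after the fact by overlapping independently found families, and it is the step your argument is missing.
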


\begin{proof}
Without loss of generality, we can assume that $H$ is a blowup of a chain  and $V(H')=V(H)$.
(This can be done by taking blowup of $H$ and adding more edges.)

Suppose that $H$ has $v$ vertices and its edge type is $R(H):=\{k_1,k_2,\ldots, k_r\}$.
Set $k_0:=k_1-1$ so that $R(H'):=\{k_0,k_1,\ldots, k_r\}$.
For convenience, we write $R$ for $R(H)$ and $R'$ for $R(H')$, and
\begin{align*}
  R+t &:=\{k_1+t,k_2+t,\ldots, k_r+t\},\\
  R'+t &:=\{k_0+t,k_1+t,\ldots, k_r+t\}.
\end{align*}

For any small $\epsilon>0$, let $n_0=\lfloor \epsilon^{-t}\rfloor$.
For any $n\geq n_0$ and any hypergraph $G_n^{R+t}$ with \[\pi_n(G)>|R(H)|-1+\epsilon=r+\epsilon,\]
we will show $G$ contains a subhypergraph $S^t(H')$.

Take a random permutation $\sigma\in S_n$ and
let $X$ be the number of edges in $G$ hit by the random full chain 
$C_\sigma$:
$$\emptyset \subset \{\sigma(1)\} \subset \{\sigma(1),\sigma(2)\}
\cdots \subset \{\sigma(1), \sigma(2),\ldots, \sigma(i)\}
\subset \cdots \subset [n].$$
We have
$$\E(X)=\pi_n(G)>r+\epsilon.$$
Since $X\leq r+1$, we have
$$\E(X)=\sum_{i=0}^{r+1}i \Pr(X=i)\leq (r+1)\Pr(X=r+1) +r.$$
Thus, we get
\begin{equation}
  \label{eq:X=r+1}
\Pr(X=r+1)\geq \frac{\E(X)-r}{r+1}>\frac{\epsilon}{r+1}.  
\end{equation}

Recall that the density $\mu_H(G)$ is
the probability that a random injective map
$f\colon V(H)\to V(G)$ such that 
$H\stackrel{f}{\hookrightarrow}G$. Applying to $H=C^{R'+t}$,
we have
$$\mu_{C^{R'+t}}(G)=\Pr(X=r+1)>\frac{\epsilon}{r+1}. $$
Every copy of the chain $C^{R'+t}$ will pass through a set $A_1\in E^{k_1+t}(G)$.
Let $\mu_{C^{R+t}, A_1}(G)$ be the conditional probability
that a random injective map $f\colon V(C^{R+t}) \to V(G)$ satisfies
$C^{R+t}\stackrel{f}{\hookrightarrow}G$ given that the chain $C^{R+t}$ passes through $A_1$.
Let $d_-(A_1)$ be the number of sets $A_0$ satisfying
$A_0\in E^{k_0+t}(G)$ and $A_0\subset A_1$.
Then, we have
$$\mu_{C^{R'+t}}(G)=\frac{1}{{n\choose k_1+t}}\sum_{A_1\in
  E^{k_1+t(G)}}\mu_{C^{R+t}, A_1}(G)
\cdot \frac{d_-(A_1)}{k_1+t}.$$
Setting $\eta=\frac{\epsilon}{2(r+1)}$,
define a family 
$$\A=\{A_1\in E^{k_1+t}(G) \colon \mu_{C^{R+t}, A_1}(G)> \eta \mbox{ and } d_-(A_{1})> \eta (k_1+t)\}.$$
We claim $|\A|>\eta {n\choose k_1+t}.$ Otherwise,
we have
\begin{align*}
  \mu_{C^{R'+t}}(G)&=\frac{1}{{n\choose k_1+t}}\sum_{A_1\in
  E^{k_1+t(G)}}\mu_{C^{R+t}, A_1}(G)
\cdot \frac{d_-(A_1)}{k_1+t}\\
&= \frac{1}{{n\choose k_1+t}}\sum_{A_1\in\A}\mu_{C^{R+t}, A_1}(G)
\cdot \frac{d_-(A_1)}{k_1+t} +  \frac{1}{{n\choose k_1+t}}\sum_{A_1\not\in\A}\mu_{C^{R+t}, A_1}(G)
\cdot \frac{d_-(A_1)}{k_1+t}\\
&\leq\eta+\eta<\frac{\epsilon}{t+1}.
\end{align*}
Contradiction!

A $k_1$-configuration is a pair $(S,A_1)$ satisfying
$A_1\in \A$, $S=A_1\setminus \{i_1,i_2,\ldots, i_{k_1}\}$,
and $A_1\setminus \{i_j\}\in E^{k_0+t}(G)$ for any $1\leq j\leq k_1$.

For any $A_1\in \A$, the number of $S$ such that 
$(S,A_1)$ forms a $k_1$-configuration is at least
$${d_-(A_1)\choose k_1}\geq {\eta(k_1+t)\choose k_1}
>\left(\frac{\eta}{2}\right)^{k_1} {k_1+t\choose k_1}.$$
In the above inequality, we use the assumption $t>\frac{2}{\eta}k_1$.

By an averaging argument, there exists an $S$ so that
the number of $k_1$-configurations $(S, \bullet)$ is 
at least
$$\frac{|\A|\left(\frac{\eta}{2}\right)^{k_1} {k_1+t\choose k_1}}
{{n\choose t}}
\geq \frac{\eta^{k_1+1}}{2^{k_1}}{{n-t\choose k_1}}.$$
Now consider the link graph $G^S$. The inequality above
implies 
$$\mu_{C^R}(G^S)\geq \frac{\eta^{k_1+2}}{2^{k_1}}.$$
This implies $G^S$ contains a blow up of $C^R$.
Thus $G^S$ has a subhypergraph $H$. By the definition of
$k_1$-configuration, this $H$ can be extended to $H'$ in $G^S$.
In another words,
$G$ contains $S^t(H')$.
\end{proof}



\section{Connections to extremal poset problems}
As stated earlier, the Tur\'an density of non-uniform hypergraphs is
motivated by the extremal subset/poset problems. 

Let $\B_n=(2^{[n]},\subseteq)$ be the $n$-dimensional Boolean lattice.
Under the partial relation $\subseteq$, any family $\F\subseteq 2^{[n]}$
can be viewed as a subposet of $\B_n$.

For posets $P=(P,\le)$ and $P'=(P',\le')$, we say $P'$ is a {\em
weak subposet of $P$} if there exists an injection $f\colon P'\to P$
that preserves the partial ordering, meaning that whenever $u\le'
v$ in $P'$, we have $f(u)\le f(v)$ in $P$.
If $P'$ is not a weak poset of $P$, we say $P$ is $P'$-free. 
The following problems originate from 
Sperner's theorem, which states that the largest antichain
of $\B_n$ is $\nchn$.

{\bf Extremal poset problems:}
Given a fixed poset $P$, what is the largest size of a $P$-free family
$\F\subset \B_n$?

Let $\La(n,P)$ be the largest size of a $P$-free family $\F\subseteq \B_n$.
The value of $\La(n,P)$ is known for only a few posets $P$.
Let $\Pa_k$ be the (poset) chain of size $k$.
Then  $\La(n,\Pa_2)=\nchn$ by Sperner's theorem.
Erd\H{o}s \cite{Erdos45}  proved that
$\La(n,\Pa_k)=\Sigma(n,k)$, where $\Sigma(n,k)$ is the sum of
$k$ largest binomial coefficients. De Boinis-Katona-Swanepoel
\cite{DebKatSwa} proved $\La(n,\Oh_{4})=\Sigma(n,2)$. Here
$\Oh_4$ is the butterfly poset ($A,B \; \subset \; C,D$), or the crown poset of size $4$.

The asymptotic value of $\La(n, P)$ has been discovered for
various posets (see Table \ref{tab:1}). Let $e(P)$ be the largest
integer $k$ so that the family of $k$ middle layers of $\B_n$ is
$P$-free. Griggs and Lu \cite{GriLu} first conjecture
$\lim_{n\to\infty}\frac{\La(n,P)}{\nchn}$ exists and is an integer,
and it slowly involves into the following conjecture.

\begin{conjecture}\label{conj:2}
  For any fixed poset $P$, $\lim_{n\to\infty}\frac{\La(n,P)}{\nchn}=e(p)$.
\end{conjecture}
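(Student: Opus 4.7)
The plan is to reduce Conjecture \ref{conj:2} to the non-uniform hypergraph Tur\'an framework developed in this paper. The starting observation is the elementary inequality
\[|\F| \;=\; \sum_{F\in\F} 1 \;\leq\; \nchn\sum_{F\in\F}\frac{1}{\binom{n}{|F|}} \;=\; \nchn\cdot h_n(\F),\]
valid for any $\F\subseteq 2^{[n]}$, since $\binom{n}{|F|}\leq\nchn$ termwise. Combined with the matching lower bound $\La(n,P)\geq \Sigma(n,e(P))=(e(P)+o(1))\nchn$, obtained by taking the $e(P)$ middle layers of $\B_n$ (which form a $P$-free family by the definition of $e(P)$), this reduces Conjecture \ref{conj:2} to the stronger \emph{Lubell-function} statement: for every $P$-free $\F\subseteq 2^{[n]}$, $h_n(\F)\leq e(P)+o(1)$.

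Next, I would encode this Lubell bound as a Tur\'an-density problem for non-uniform hypergraphs. Viewing $\F$ as a non-uniform hypergraph on $[n]$, any weak embedding of $P$ into $\B_n$ is recorded by a \emph{size-profile} $(|A_1|,\ldots,|A_{|P|}|)$ for the images of the elements of $P$ together with the inclusion pattern; let $\aH_P$ denote the family of all non-uniform hypergraphs realized this way (one member per admissible profile). Then $\F$ is $P$-free if and only if $\F$ is $\aH_P$-free, so the target becomes $\pi(\aH_P)\leq e(P)$, and the middle-layer construction supplies the matching lower bound $\pi(\aH_P)\geq e(P)$, giving the conjectured equality.

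Third, I would try to prune $\aH_P$ to a finite subfamily using the machinery of this paper. The Blowing-up Theorem together with the Squeeze Theorem identify profiles that differ by bounded blow-ups, collapsing infinitely many members of $\aH_P$ onto one; the Supersaturation Lemma converts any density excess above $e(P)+\epsilon$ into many copies of a fixed finite sub-pattern; and Theorem \ref{t8} controls how $\pi$ behaves as profiles move ``upward'' by iterated suspension, suggesting that the extremal configurations concentrate near the middle levels of $\B_n$. A clean implementation of these reductions would leave only finitely many canonical profiles to analyze, for each of which the direct computations in Section~4 and Section~6 provide a template for matching the $e(P)$ bound.

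The central obstacle, and the step I expect to be genuinely hard, is that $\aH_P$ is honestly infinite and the paper's tools do not by themselves force a reduction to finitely many profiles for an arbitrary poset $P$. Evidence that this is the difficult step comes from the diamond poset $D$ discussed in the introduction: the Lubell-function version of Conjecture \ref{conj:2} for $P=D$ asserts $\pi(\aH_D)=2=e(D)$, which is precisely the Diamond Conjecture, still open despite substantial effort, with the best known upper bounds strictly greater than $2$. Accordingly, any complete proof of Conjecture \ref{conj:2} seems to require a new ingredient---most plausibly a stability or profile-averaging lemma reducing the continuum of size-profiles to a single limiting object, or a poset-adapted version of flag-algebra arguments---going beyond the tools assembled in the present paper.
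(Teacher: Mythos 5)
You should note first what kind of statement this is: Conjecture \ref{conj:2} is exactly that --- a conjecture. The paper offers no proof of it; it only records the special posets for which it has been verified (Table \ref{tab:1}) and uses it as motivation, and the flagship case $P=\D_2$ remains open. Your proposal is candid that it does not close the argument, which is the correct diagnosis, but the route you sketch is not merely incomplete --- its first step already fails. Reducing the conjecture to the Lubell-function statement ``every $P$-free $\F\subseteq 2^{[n]}$ satisfies $h_n(\F)\leq e(P)+o(1)$'' is a strict strengthening, and that strengthening is false for general $P$: for the fork $\V_2$ (where $e(\V_2)=1$) the family consisting of the full middle layer together with the single set $[n]$ is $\V_2$-free, yet its Lubell mass is $1+1/\binom{n}{n}=2$. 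The extra mass comes from one tiny level and costs almost nothing in cardinality, which is why the conjecture itself survives while the Lubell strengthening does not; this is precisely the phenomenon behind the paper's notion of \emph{uniform-L-bounded} posets being a special class, and behind the fact that Lubell-only arguments for the diamond stall well above $2$. So any argument routed through your first displayed reduction cannot prove Conjecture \ref{conj:2} for arbitrary $P$.

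The second step also does not fit the machinery you invoke. The Tur\'an density of this paper is defined for a fixed finite edge-type set $R$, with the host graphs required to satisfy $R(G_n)\subseteq R(H)$; a $P$-free family in $\B_n$ occupies levels whose positions and number grow with $n$, so the object $\pi(\aH_P)$ you introduce is not expressible as one of the paper's densities, and supersaturation, blow-up, and Theorem \ref{t8} all operate inside a fixed $R$. This mismatch is exactly why the paper proves only the one-directional Theorem \ref{t9}, bounding the \emph{consecutive-layer} quantity $\pi^c(P)$ by $\pi(S^t(H))$, and hence only the implication that Conjecture \ref{conj:2} implies Conjecture \ref{conj:3}; it does not (and cannot, with these tools) run the implication in the direction your plan requires. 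In short: there is no proof in the paper to reconstruct, your sketch correctly senses that a genuinely new ingredient is needed, but as written it rests on a reduction that is provably too strong and on an encoding the paper's framework does not support.
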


We overload the notation $\pi(P)$ for the limit
$\lim_{n\to\infty}\frac{\La(n,P)}{\nchn}$, where $P$ is a poset.
The conjecture is based on the observation of several previous known
results, which are obtained by Katona and others \cite{CarKat, DebKat,
DebKatSwa, GriKat, KNS, KatTar, Tha}. We summarize the known poset
$P$, for which the conjecture has been verified in Table \ref{tab:1}.

\begin{table}[hbt]
  \centering
  \begin{tabular}{|l|c|c|c|c|}
   \hline
$P$ & meaning & 
\parbox{1.4cm}{Hasse\\ Diagram} & $e(P)$ &References\\
\hline
\parbox{1cm}{fork
$\V_r$} & $A<B_1,\ldots,B_r$ &
\raisebox{-3mm}[5mm][2mm]{
\begin{tikzpicture}[scale=0.7, vertex/.style = {shape = circle,fill =
    black, minimum size = 2pt, inner sep=0pt}]
\node at (0,0) [vertex] (v0) {};
\node at (-0.5,1) [vertex] (v1) {};
\node at (-0.3,1) [vertex] (v2) {};
\node at (0.3,1) [vertex] (v4) {};
\node at (0.5,1) [vertex] (v5) {};
\draw (v0) -- (v1);
\draw (v0) -- (v2);
\draw [dotted] (v2) -- (v4);
\draw (v0) -- (v4);
\draw (v0) -- (v5);
\end{tikzpicture}
}
&1& \parbox{2.7cm}{
\cite{KatTar} for $r=2$;\\
\cite{DebKat} for general $r$.  
}\\
\hline
$\N$ & $A<B$, $B>C$, and $C<D$. &
\raisebox{-1mm}[5mm][2mm]{
\begin{tikzpicture}[scale=0.5, vertex/.style = {shape = circle,fill =
    black, minimum size = 2pt, inner sep=0pt}]
\node at (0,0) [vertex] (v0) {};
\node at (0,1) [vertex] (v1) {};
\node at (1,0) [vertex] (v2) {};
\node at (1,1) [vertex] (v3) {};
\draw (v0) -- (v1);
\draw (v1) -- (v2);
\draw (v2) -- (v3);
\end{tikzpicture}
}
&1& \parbox{2cm}{
\cite{GriKat}
}\\
\hline
\parbox{1.5cm}{butterfly
$\Oh_4$} & $A<B$, $B>C$, $C<D$, and $A<D$. &
\raisebox{-1mm}[5mm][2mm]{
\begin{tikzpicture}[scale=0.5, vertex/.style = {shape = circle,fill =
    black, minimum size = 2pt, inner sep=0pt}]
\node at (0,0) [vertex] (v0) {};
\node at (0,1) [vertex] (v1) {};
\node at (1,0) [vertex] (v2) {};
\node at (1,1) [vertex] (v3) {};
\draw (v0) -- (v1);
\draw (v1) -- (v2);
\draw (v2) -- (v3);
\draw (v0) -- (v3);
\end{tikzpicture}
}
&2& \parbox{2cm}{
\cite{DebKatSwa}
}\\
\hline
\parbox{1.4cm}{diamonds $\D_r$} & \parbox{4.5cm}{$A<B_1,\ldots,B_r<C$, where $r=3,4,7,8,9,15,16,\ldots$.
}
 &
\raisebox{-3mm}[6mm][3mm]{
\begin{tikzpicture}[scale=0.4, vertex/.style = {shape = circle,fill =
    black, minimum size = 2pt, inner sep=0pt}]
\node at (0,0) [vertex] (v0) {};
\node at (-0.5,1) [vertex] (v1) {};
\node at (-0.3,1) [vertex] (v2) {};
\node at (0.3,1) [vertex] (v3) {};
\node at (0.5,1) [vertex] (v4) {};
\node at (0,2) [vertex] (v5) {};
\draw (v0) -- (v1);
\draw (v0) -- (v2);
\draw (v0) -- (v3);
\draw (v0) -- (v4);
\draw (v5) -- (v1);
\draw (v5) -- (v2);
\draw (v5) -- (v3);
\draw (v5) -- (v4);
\end{tikzpicture}
}
&2& \parbox{2cm}{
\cite{GriLiLu}
}\\
\hline
\parbox{1cm}{baton
$P_k(s,t)$} &
\parbox{4.5cm}{ $A_1,\ldots, A_s<B_1<B_2<\cdots$
$\cdots<B_{k-2}<C_1,\ldots, C_t$}
 &
\raisebox{-4mm}[8mm][5mm]{
\begin{tikzpicture}[scale=0.5, vertex/.style = {shape = circle,fill =
    black, minimum size = 2pt, inner sep=0pt}]
\node at (0,0) [vertex] (b0) {};
\node at (0,0.5) [vertex] (b1) {};
\node at (0,1) [vertex] (b2) {};
\node at (-0.5,-0.5) [vertex] (a1) {};
\node at (-0.3,-0.5) [vertex] (a2) {};
\node at (0.3,-0.5) [vertex] (a3) {};
\node at (0.5,-0.5) [vertex] (a4) {};
\node at (-0.5,1.5) [vertex] (c1) {};
\node at (-0.3,1.5) [vertex] (c2) {};
\node at (0.3,1.5) [vertex] (c3) {};
\node at (0.5,1.5) [vertex] (c4) {};
\draw (b0) -- (b1);
\draw [dotted] (b1) -- (b2);
\draw (b0) -- (a1);
\draw (b0) -- (a2);
\draw (b0) -- (a3);
\draw (b0) -- (a4);
\draw (b2) -- (c1);
\draw (b2) -- (c2);
\draw (b2) -- (c3);
\draw (b2) -- (c4);
\draw [dotted] (c2) -- (c3);
\draw [dotted] (a2) -- (a3);
\end{tikzpicture}
}
&k-1& \parbox{3.3cm}{
\cite{Tha} for $s\!=\!1$, $k,t\!\geq\! 2$;\\
\cite{GriLu} for $k,s,t\geq 2.$  
}\\
\hline
\parbox{1cm}{tree $T$} & \parbox{4.5cm}{A tree poset, whose Hasse diagram is a tree.
Let $h(T)$ be the height of $T$.} &
\raisebox{-3mm}[5mm][2mm]{
\begin{tikzpicture}[scale=0.4, vertex/.style = {shape = circle,fill =
    black, minimum size = 2pt, inner sep=0pt}]
\node at (0,0) [vertex] (b0) {};
\node at (1,0) [vertex] (b1) {};
\node at (2,0) [vertex] (b2) {};
\node at (-0.5,1) [vertex] (c0) {};
\node at (0.5,1) [vertex] (c1) {};
\node at (1.5,1) [vertex] (c2) {};
\node at (-0.5,-1) [vertex] (a0) {};
\node at (0.5,-1) [vertex] (a1) {};
\node at (1.5,-1) [vertex] (a2) {};
\draw (a0) -- (b0);
\draw (a1) -- (b1);
\draw (a2) -- (b2);
\draw (b0) -- (c0);
\draw (b0) -- (c1);
\draw (b1) -- (c1);
\draw (b1) -- (c2);
\draw (a2) -- (c1);
\end{tikzpicture}
}
&$h(T)-1$& \parbox{3cm}{
\cite{GriLu} for $h(T)=2$\\
\cite{Buk}  for general cases.
}\\
\hline
\parbox{1cm}{Crowns $\Oh_{2t}$} & \parbox{4.5cm}{A height-2 poset,
  whose Hasse diagram is a cycle $C_{2t}$.} &
\raisebox{-3mm}[6mm][4mm]{
\begin{tikzpicture}[scale=0.75, vertex/.style = {shape = circle,fill =
    black, minimum size = 2pt, inner sep=0pt}]
\node at (0,0) [vertex] (b0) {};
\node at (0.2,0) [vertex] (b1) {};
\node at (0.4,0) [vertex] (b2) {};
\node at (0.8,0) [vertex] (b4) {};
\node at (1,0) [vertex] (b5) {};
\node at (0,1) [vertex] (c0) {};
\node at (0.2,1) [vertex] (c1) {};
\node at (0.4,1) [vertex] (c2) {};
\node at (0.8,1) [vertex] (c4) {};
\node at (1,1) [vertex] (c5) {};
\draw (b0) -- (c0);
\draw (b1) -- (c0);
\draw (b1) -- (c1);
\draw (b2) -- (c1);
\draw (b2) -- (c2);
\draw [dotted] (b2) -- (b4);
\draw [dotted] (c2) -- (c4);
\draw (b4) -- (c4);
\draw (b5) -- (c4);
\draw (b5) -- (c5);
\draw (b0) -- (c5);

\end{tikzpicture}
}
&$1$& \parbox{2.8cm}{
\cite{GriLu} for even $t\geq 2$;\\
\cite{crowns}  for odd $t\geq 7$.
}\\
\hline

  \end{tabular}
  \caption{Conjecture \ref{conj:2} has been verified for various posets $P$. }
  \label{tab:1}
\end{table}

The posets in Table \ref{tab:1} are far from complete. 
Let $\lambda_n(P)=\max\{h_n(\F)\colon \F\subseteq 2^{[n]}, P\mbox{-free}\}.$
A poset $P$ is  called {\em uniform-L-bounded} if $\lambda_n(P)\leq e(P)$
for all $n$. Griggs-Li \cite{GriLi, GriLi2} proved $\La(n,P)
=\sum_{i=\lfloor \frac{n+e(p)-1}{2}\rfloor}^{i=\lfloor
  \frac{n-e(p)+1}{2}\rfloor}{n\choose i}$
 if
$P$ is uniform-L-bounded. The uniform-L-bounded posets include
$\Pa_k$ (for any $k\geq 1$),  diamonds $\D_k$ (for $k\in [2^{m-1}-1,
2^m-\mchm-1]$ where  $m:=\lceil \log_2(k+2)\rceil$), and harps
$\aH(l_1,l_2,\ldots,l_k)$ (for $l_1>l_2>\cdots>l_k$), and other
posets. Noticeably,  Griggs-Li \cite{GriLi2} provides a method to
 construct large uniform-L-bounded posets from smaller
 uniform-L-bounded posets. There are infinitely many posets $P$ so that
$\pi(P)=e(P)$ holds.

Although there is no counter example found yet for Conjecture \ref{conj:2},
some posets have resisted efforts to determine their $\pi$ value.
The most studied, yet unsolved, poset is the diamond poset $\D_2$ (or $\B_2$,
$Q_2$ in some papers) as shown in Figure \ref{fig:d2c6c10}.
Griggs and Lu first observed $\pi(\D_2)\in [2,2.296]$.
Axenovich, Manske, and Martin~\cite{AxeManMar} came up with a new approach which improves the
upper bound to $2.283$. Griggs, Li, and Lu \cite{GriLiLu} further
improved the upper bound to $2.27\dot 3=2\frac{3}{11}$. 
Very recently, Kramer-Martin-Young \cite{KMY} recently proved $\pi(\D_2)\leq 2.25$.
While it seems to be hard to prove the conjecture $\pi(\D_2)=2$,
several groups of researchers have considered restricting the problem to three consecutive layers.
Let $\La^{c}(n,P)$ be the largest size a $P$-free family
$\F\subseteq\B_n$ such that $\F$ is in $e(p)+1$ consecutive layers.
Let $\pi^c(P)=\lim_{n\to\infty}\frac{\La^c(n,p)}{\nchn}$, if the limit
exists. Here is a weaker conjecture (of consecutive layers).

\begin{conjecture}\label{conj:3}
For any fixed poset $P$, $\pi^c(P)=e(p)$.
\end{conjecture}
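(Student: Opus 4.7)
The plan is to recast Conjecture~\ref{conj:3} as a non-uniform hypergraph Tur\'an problem in the framework developed above. First observe that $e(P)=h(P)-1$, where $h(P)$ is the height of $P$: the top $h(P)-1$ layers of $\B_n$ have no chain of length $h(P)$ and are therefore $P$-free, while $h(P)$ middle layers can accommodate $P$ by routing a longest chain transversally through them. Hence I would fix $h=h(P)$, restrict attention to $\F\subseteq\B_n$ living in $h$ consecutive layers $k,k+1,\dots,k+h-1$ near $n/2$, and identify $\F$ with a hypergraph $G_\F$ of edge-type $R=\{k,k+1,\dots,k+h-1\}$. Since each $\binom{n}{k+i}$ agrees with $\nchn$ up to a factor $1+O(1/n)$, one has $|\F|/\nchn=h_n(G_\F)+o(1)$. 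Letting $\aH_P$ denote the family of all $R$-hypergraphs arising as embeddings of $P$ across these $h$ layers, the conjecture becomes $\pi(\aH_P)=|R|-1=h-1$.

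The lower bound is immediate: every $H\in\aH_P$ has $R(H)=R$ (the height-$h$ chain of $P$ forces at least one edge in each layer), so removing any single edge-type yields an $\aH_P$-free hypergraph $K_n^{R'}$ of Lubell value $h-1$, giving $\pi(\aH_P)\ge h-1$.

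For the upper bound I would mimic the random-chain argument from the proof of Proposition~\ref{p1}(3). Suppose for contradiction that $h_n(G)>h-1+\epsilon$ for some $\aH_P$-free $G$ and infinitely many $n$. A uniformly random full chain $C_\sigma$ meets at most one edge of $G$ per layer, so $X=|E(G)\cap C_\sigma|\le h$ while $\E(X)>h-1+\epsilon$, forcing $\Pr(X=h)>\epsilon/h$. Thus $G$ contains a positive density of transversal chains $C^R$, and since $\pi(C^R(s))=\pi(C^R)=h-1$ by Theorem~\ref{blowup} together with Proposition~\ref{p1}(3), for any fixed $s$ the graph $G$ must, for large $n$, contain a blow-up $C^R(s)$. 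By the Squeeze Theorem, if some $H\in\aH_P$ is a subgraph of $C^R(s)$ for some $s$, then $G$ contains $H$, contradicting $\aH_P$-freeness.

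The hard step is this last one: in general, not every embedding of $P$ into $h$ consecutive layers fits inside a blow-up of the chain. The diamond $\D_2$ already illustrates this: the pattern $A<B_1,B_2<C$ requires $B_1$ and $B_2$ to differ in the middle partite class $V_{k+1}$ of $C^R(s)$, but $C$, being a transversal of $V_1,\dots,V_{k+2}$, can contain only one vertex of $V_{k+1}$, so no realization of $\D_2$ embeds into any $C^R(s)$. Handling such posets requires a refined ``random flag'' argument that tracks richer structure in $P$ than just its longest chain, for instance sampling several correlated chains simultaneously so as to detect shared top and bottom elements. This is essentially the barrier that has left $\pi(\D_2)$ open even in the unrestricted setting, and it is the chief obstacle to resolving Conjecture~\ref{conj:3} beyond the posets summarized in Table~\ref{tab:1}.
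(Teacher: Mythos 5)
This statement is an open conjecture; the paper does not prove it, and your proposal (correctly) does not claim to either. The paper's actual contributions here are Theorem~\ref{t9}, which gives the bound $\pi^c(P)\le\pi(S^t(H))$ for any hypergraph $H$ with $|R(H)|=e(P)+1$ representing a superposet of $P$, the ensuing corollary that Conjecture~\ref{conj:2} implies Conjecture~\ref{conj:3}, and the partial results flowing from Theorem~\ref{t8} for posets representable by subgraphs of chain blowups (with extra $(k_1-1)$-edges). Your reduction to a non-uniform Tur\'an problem and your random-chain computation of $\Pr(X=h)>\epsilon/h$ are in the same spirit as the paper's arguments (Proposition~\ref{p1}(3) and the proof of Theorem~\ref{t8}), and you are right that the remaining step --- forcing a copy of $P$, not merely a chain blowup --- is the genuine obstruction, exactly the one that keeps $\pi(\D_2)$ open.

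However, there is a concrete error at the start. You assert $e(P)=h(P)-1$, justifying it by noting that $h(P)-1$ consecutive layers are $P$-free and that $h(P)$ middle layers "accommodate $P$ by routing a longest chain transversally." The first half only yields $e(P)\ge h(P)-1$; the second half is false in general, since containing a chain of length $h(P)$ does not mean containing $P$. The butterfly $\Oh_4$ is a counterexample in the paper's own table: it has height $2$ but $e(\Oh_4)=2$, because two consecutive middle layers are $\Oh_4$-free (if $A,C$ are distinct $k$-sets both contained in $(k+1)$-sets $B$ and $D$, then $A\cup C=B=D$). As a consequence, $\La^c(n,P)$ is defined over $e(P)+1$ consecutive layers, which need not equal $h(P)$ layers, so your identification of $\F$ with an $R$-graph for $R=\{k,\dots,k+h(P)-1\}$ does not match the object the conjecture is about when $e(P)>h(P)-1$. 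If you replace $h(P)$ by $e(P)+1$ throughout, the setup aligns with the paper's, and then the remaining difficulty is precisely the one you flagged at the end --- which the paper likewise leaves open.
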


\begin{figure}[hbt]
  \centering
\begin{tikzpicture}[scale=0.7, vertex/.style = {shape = circle,fill =
    black, minimum size = 2pt, inner sep=0pt}]
\node at (0,0) [vertex] (v0) {};
\node at (-0.5,1) [vertex] (v1) {};
\node at (0.5,1) [vertex] (v2) {};
\node at (0,2) [vertex] (v3) {};
\draw (v0) -- (v1);
\draw (v0) -- (v2);
\draw (v1) -- (v3);
\draw (v2) -- (v3);
\node at (0,-0.5) (label) {$\D_2$};
\end{tikzpicture}
\hfil
\begin{tikzpicture}[scale=0.7, vertex/.style = {shape = circle,fill =
    black, minimum size = 2pt, inner sep=0pt}]
\node at (0,0) [vertex] (b0) {};
\node at (1,0) [vertex] (b1) {};
\node at (2,0) [vertex] (b2) {};
\node at (0,2) [vertex] (c0) {};
\node at (1,2) [vertex] (c1) {};
\node at (2,2) [vertex] (c2) {};
\draw (b0) -- (c0);
\draw (b1) -- (c0);
\draw (b1) -- (c1);
\draw (b2) -- (c1);
\draw (b2) -- (c2);
\draw (b0) -- (c2);
\node at (1,-0.5) (label) {$\Oh_6$};
\end{tikzpicture}
\hfil
\begin{tikzpicture}[scale=0.7, vertex/.style = {shape = circle,fill =
    black, minimum size = 2pt, inner sep=0pt}]
\node at (0,0) [vertex] (b0) {};
\node at (1,0) [vertex] (b1) {};
\node at (2,0) [vertex] (b2) {};
\node at (3,0) [vertex] (b3) {};
\node at (4,0) [vertex] (b4) {};
\node at (0,2) [vertex] (c0) {};
\node at (1,2) [vertex] (c1) {};
\node at (2,2) [vertex] (c2) {};
\node at (3,2) [vertex] (c3) {};
\node at (4,2) [vertex] (c4) {};
\draw (b0) -- (c0);
\draw (b1) -- (c0);
\draw (b1) -- (c1);
\draw (b2) -- (c1);
\draw (b2) -- (c2);
\draw (b3) -- (c2);
\draw (b3) -- (c3);
\draw (b4) -- (c3);
\draw (b4) -- (c4);
\draw (b0) -- (c4);
\node at (2,-0.5) (label) {$\Oh_{10}$};
\end{tikzpicture}
  \caption{Three most wanted posets for Conjecture \ref{conj:2}: $\D_2$, $\Oh_6$, and $\Oh_{10}$. }
  \label{fig:d2c6c10}
\end{figure}
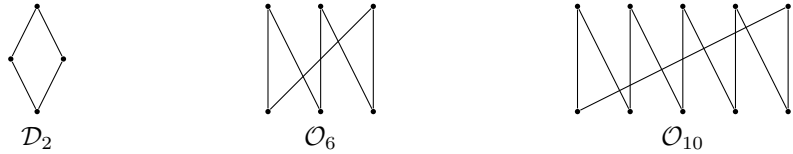

Axenovich-Manske-Martin \cite{AxeManMar} first proved $\pi^c(\D_2)\leq
2.207$; it was recently improved to 
 $2.1547$ (Manske-Shen
\cite{MS}) and $2.15121$ (Balogh-Hu-Lidick\'y-Liu \cite{BHLL}).

We say a hypergraph $H$ {\em represents} a poset $P$ if
the set of edges of $H$ (as a poset) is isomorphic to $P$.
For any fixed finite poset $P$, by the definition of $e(P)$,
there exists a hypergraph $H\subseteq \B_{n_0}$ with $|R(H)|=e(P)+1$ 
representing a superposet of $P$.

\begin{theorem}\label{t9}
Suppose that a hypergraph $H$ with $|R(H)|=e(P)+1$ represents
a superposet of $P$. Then, for any integer $t\geq 0$, we have
$$\pi^c(P)\leq \pi(S^t(H)).$$
\end{theorem}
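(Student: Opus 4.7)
The plan is to bound a near-extremal $P$-free family in $e(P)+1$ consecutive layers of $\B_N$ by converting it, via a ``link'' operation, into an $S^t(H)$-free hypergraph of equally large Lubell value on a smaller vertex set. Fix $\epsilon>0$, take $N$ large, and pick a $P$-free family $\F \subseteq \binom{[N]}{s} \cup \binom{[N]}{s+1} \cup \cdots \cup \binom{[N]}{s+e(P)}$ with $|\F|/\binom{N}{\lfloor N/2\rfloor} \geq \pi^c(P)-\epsilon$, where $s$ is chosen so the layers straddle the middle of $\B_N$. Since $\binom{N}{s+i} \leq \binom{N}{\lfloor N/2\rfloor}$ for every $i$, the Lubell function satisfies $h_N(\F) \geq |\F|/\binom{N}{\lfloor N/2\rfloor}$, so it suffices to bound $h_N(\F)$.

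Next I would take $H$ in a natural representation of the $P$-superposet inside the $e(P)+1$ middle layers of some $\B_{n_0}$, so that $R(H)=\{k_1, k_1+1, \ldots, k_1+e(P)\}$ is consecutive; then $R(S^t(H))=R(H)+t$ is consecutive too. Set $u := s - k_1 - t$ and draw a uniformly random $u$-subset $U\subseteq[N]$. Define the link $G_U := \{F\setminus U : F\in\F,\, U\subseteq F\}$ on $[N]\setminus U$; by construction $R(G_U)\subseteq R(S^t(H))$. The crucial observation is that $G_U$ is $S^t(H)$-free: any copy of $S^t(H)$ inside $G_U$ would pull back via $E\mapsto E\cup U$ to a subfamily of $\F$ whose inclusion-poset is isomorphic to that of $S^t(H)$, which equals the inclusion-poset of $H$ and hence contains $P$, contradicting $P$-freeness of $\F$.

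For the averaging step I would use the identity $\binom{N}{u}\binom{N-u}{|F|-u}=\binom{N}{|F|}\binom{|F|}{u}$:
\begin{align*}
\E_U\bigl[h_{N-u}(G_U)\bigr]
&= \sum_{F\in\F}\Pr[U\subseteq F]\cdot\frac{1}{\binom{N-u}{|F|-u}} \\
&= \sum_{F\in\F}\frac{\binom{|F|}{u}}{\binom{N}{u}\binom{N-u}{|F|-u}} = h_N(\F).
\end{align*}
Hence some $U$ realizes $h_{N-u}(G_U) \geq h_N(\F)$, and since $G_U$ is $S^t(H)$-free with edges in $R(S^t(H))$ one concludes $h_N(\F)\leq h_{N-u}(G_U)\leq \pi_{N-u}(S^t(H))$. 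Because $N-u=N-s+k_1+t\sim N/2\to\infty$ as $N\to\infty$, $\pi_{N-u}(S^t(H))\to\pi(S^t(H))$; sending $\epsilon\to 0$ yields $\pi^c(P)\leq\pi(S^t(H))$.

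The main obstacle I anticipate is edge-type alignment in the link step. When $R(H)$ is consecutive (automatic for the natural representation of a $P$-superposet inside the middle $e(P)+1$ layers), $G_U$ has edge-type set exactly $R(S^t(H))$ and every edge of $\F$ contributes to the averaging identity above. For a general $H$ with gaps in $R(H)$, only the $\F$-edges whose size lies in $R(S^t(H))+u$ survive the link, and the averaging then bounds only the partial Lubell sum over those layers; one would accordingly either restrict the hypothesis to the natural consecutive $H$ (which always exists and suffices to deduce the theorem) or combine several link operations at different subset sizes $u$.
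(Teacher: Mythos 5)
Your proof is correct and matches the paper's argument: both take a near-extremal $P$-free family in $e(P)+1$ consecutive layers near the middle, pass to a uniformly random link of the size that aligns edge types with $R(S^t(H))$, use the averaging identity $\E_U[h_{N-u}(G_U)]=h_N(\F)$, and conclude by noting that the link is $S^t(H)$-free since $S^t(H)$ and $H$ represent the same superposet of $P$. The two small points you flag --- that the layers containing $\F$ must sit near the middle (so that $u\ge 0$ and $N-u\to\infty$), and that $R(H)$ needs to be consecutive for the link's edge types to land inside $R(S^t(H))$ --- are exactly the technicalities the paper addresses, the former with an explicit tail estimate forcing $k_1\ge n/2-2\sqrt{n\ln n}$, the latter implicitly by working with a representation of the superposet in consecutive middle layers.
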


{\bf Proof:}
Let $x:=\pi(S^t(H))$. For any $\epsilon>0$, 
there exists an $n_1$ so that $\pi_{n}(S^{t}(H))
\leq x+\epsilon$ for all $n\geq n_1$. We claim
$$\pi^c(P)\leq x+2\epsilon.$$
 Otherwise, for any sufficiently large $n$,
there exists a family $\F\subset \B_n$ which is in $e(p)+1$
consecutive layers with $|\F|>(x+\epsilon)\nchn$.
Let $k_0$ be the smallest size of edges in $S^{t}(H)$.
Let $k_1$ be the integer that $\F$ is in $k_1$-th to $(k_1+e(P))$-th
layer. Since $e(P)\leq x < e(P)+1$, we have
$$|\F|>(x+\epsilon)\nchn\geq (e(P)+\epsilon)\nchn.$$
Note any layer below $\frac{n}{2}-2\sqrt{n\ln n}$ can
only contribute $\frac{2^n}{n^2}$, which is less than $\epsilon \nchn$
for sufficiently large $n$. We get 
$$k_1\geq \frac{n}{2}-2\sqrt{n\ln n}.$$
Choose $n$ large enough so that $k_1\geq k_0$ and $n-k_1+k_0\geq n_1$.
We observe that 
$$h_n(\F)\geq  \frac{|F|}{\nchn}> x+2\epsilon.$$
By the property of Lubell function,
$h_n(\F)$ is the average of $h_{n+k_0-k_1}(\F_S)$
over all $S\in {[n]\choose k_1-k_0}$, where 
$\F_S$ is the link hypergraph over $S$.
Therefore, there exists a set $S\in {[n]\choose k_1-k_0}$ so that
$h_{n+k_0-k_1}(\F_S)>x+2\epsilon$.
Thus, $\F_S$ contains a subhypergraph  $S^{t_0}(H)$.
In particular, $\F$ contains a subposet $P$.

Thus, we have
$$\pi^c(P)\leq x+2\epsilon.$$
Since this holds for any $\epsilon>0$, we have
$\pi^c(P)\leq x.$
\hfill $\square$

\begin{corollary}
   Conjecture \ref{conj:2} implies Conjecture \ref{conj:3}. 
\end{corollary}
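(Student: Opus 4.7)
The corollary admits a short proof by elementary estimates, and (perhaps surprisingly) Theorem \ref{t9} is not needed. The strategy is to sandwich the limit $\pi^c(P)$ between $e(P)$ from both sides, using only the definition of $\La^c$ and of $e(P)$, plus Conjecture \ref{conj:2}.

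For the upper bound, I would observe that every $P$-free family $\F\subseteq \B_n$ that is confined to $e(P)+1$ consecutive layers is, in particular, a $P$-free family in $\B_n$. Hence $\La^c(n,P)\leq \La(n,P)$ for every $n$. Assuming Conjecture \ref{conj:2}, $\La(n,P)/\nchn \to e(P)$, which yields
\[\limsup_{n\to\infty} \frac{\La^c(n,P)}{\nchn} \leq e(P).\]

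For the matching lower bound, I would take the family $\F^*$ formed by the union of the $e(P)$ middle layers of $\B_n$. By the very definition of $e(P)$, this family is $P$-free, and it obviously lies inside $e(P)+1$ consecutive layers, so $\La^c(n,P)\geq |\F^*|$. For each fixed $j$ the ratio $\binom{n}{\lfloor n/2\rfloor + j}/\nchn$ tends to $1$, so summing the $e(P)$ central binomial coefficients gives $|\F^*|=(e(P)+o(1))\nchn$, hence
\[\liminf_{n\to\infty}\frac{\La^c(n,P)}{\nchn} \geq e(P).\]

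Combining the two estimates forces $\pi^c(P)$ to exist and equal $e(P)$, which is precisely Conjecture \ref{conj:3}. I do not foresee a real obstacle here: the only ingredient beyond Conjecture \ref{conj:2} is the standard asymptotic $\binom{n}{\lfloor n/2\rfloor + j}/\nchn\to 1$ for fixed $j$. (By contrast, if one insisted on routing the argument through Theorem \ref{t9} one would have to establish $\lim_{t\to\infty}\pi(S^t(H))=e(P)$ for some hypergraph $H$ representing a superposet of $P$, which does not obviously follow from Conjecture \ref{conj:2} and appears strictly harder.)
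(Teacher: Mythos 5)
Your proof is correct, and your diagnosis is right: the implication is elementary and does not route through Theorem~\ref{t9}. The upper bound is the one-line observation that a $P$-free family confined to $e(P)+1$ consecutive layers is in particular a $P$-free family, so $\La^c(n,P)\leq\La(n,P)$, and Conjecture~\ref{conj:2} gives $\limsup_n \La^c(n,P)/\nchn \leq e(P)$. The lower bound is that the union of the $e(P)$ middle layers is $P$-free by definition of $e(P)$, sits inside $e(P)+1$ consecutive layers, and has size $\Sigma(n,e(P))=(e(P)+o(1))\nchn$, so $\liminf_n \La^c(n,P)/\nchn \geq e(P)$. Together these force the limit to exist and equal $e(P)$. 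The paper states the corollary without an explicit proof; its placement immediately after Theorem~\ref{t9} is narrative (Theorem~\ref{t9} is used for the \emph{unconditional} results via Theorem~\ref{t8} in the subsequent paragraph, not for this implication), and your parenthetical remark correctly explains why trying to derive the corollary from Theorem~\ref{t9} would actually require the suspension conjecture rather than Conjecture~\ref{conj:2}.
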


In particular, from Theorem \ref{t8}, we get a new family of posets $P$
so that $\pi^c(P)=e(p)$. A special example is the crown
$\Oh_{2t}$, where $t=4$ and $t\geq 6$. The idea can be traced back 
from Conlon's concept {\em $k$-representation} of bipartite graphs
\cite{Conlon}. Theorem \ref{t8} can be viewed as a natural
generalization of Conlon's theorem.
It is easy to generate more examples of
posets in this family. However, a complete
description of these posets is tedious; thus it is omitted here.

Note that the complete hypergraph $K_2^{\{0,1,2\}}$ has $4$ edges
$\emptyset, \{1\}, \{2\}, \{1,2\}$; which form the diamond poset
$\D_2$. In particular, for any $t\geq 0$, we have
$$\pi^c(\D_2)\leq \pi(S^t(K_2^{\{0,1,2\}})).$$
This provides a possible way to improve the bounds of $\pi^c(\D_2)$.


\begin{thebibliography}{30}
\bibitem{AxeManMar} M. Axenovich, J. Manske, and R. Martin,
{$Q_2$-free families in the Boolean lattice}, {\em Order}
published online: 15 March 2011.



 \bibitem{baber} Rahil Baber and John Talbot, {New Tur\'{a}n
  densities for 3-graphs}, {\em Elect. J. Combin.}, (2012), P22, 21p.

\bibitem{BHLL}
J.~Balogh, P.~Hu, B.~Lidick\'y, and H.~Liu
Upper bounds on the size of 4- and 6-cycle-free subgraphs of the
hypercube, arxiv:1201.0209 [math.CO].


\bibitem{Buk}
B. Bukh, {Set families with a forbidden poset}, {\em Elect. J.
Combin.} {\bf 16} (2009), R142, 11p.

\bibitem{CarKat}
T. Carroll and G. O. H. Katona, Bounds on
  maximal families of sets not containing three sets with
$A\cup B\subset C, A \not\subset B$, {\em Order} {\bf 25} (2008)
229--236.

%
\bibitem{dC94}
D. de Caen, The current status of Tur\'{a}n's problem on
hypergraphs. {\it Extremal problems for finite sets} (Visegr\'ad,
1991), 187--197, Bolyai Soc. Math. Stud., 3, J\'anos Bolyai Math.
Soc., Budapest, 1994.

%
\bibitem{ChLu} F. Chung, L. Lu, An upper bound for the
Tur\'an number $t\sb 3(n,4)$. {\em J. Combin. Theory Ser. A} {\bf 87}
(1999), no. 2, 381--389.


\bibitem{Conlon}
D.~Conlon, An extremal theorem in the hypercube. {\it Electron. J. Combin.} 
{\bf 17} (2010), \#R111.

\bibitem{DebKat} A. De Bonis and G. O. H. Katona, {Largest families
without an $r$-fork}, {\em Order} {\bf 24} (2007), 181--191.

\bibitem{DebKatSwa}
A. De Bonis, G. O.H. Katona and K. J. Swanepoel, {Largest family
without $A\cup B \subset C\cap D$}, {\em J. Combin. Theory (Ser.
A)} {\bf 111} (2005), 331--336.

\bibitem{Erdos45} P.~Erd\H{o}s,
On a lemma of Littlewood and Offord, {\em Bull. Amer. Math. Soc.}
{\bf 51} (1945), 898--902.

\bibitem{Erdos64} P.~Erd\H{o}s, On extremal problems of graphs and generalized g
raphs, 
{\em Israel J. Math.} {\bf 2} (1964), 183--190.

\bibitem{Er81}
P. Erd\H os,  On the combinatorial problems which I would like to
see solved, {\it Combinatorica} {\bf 1} (1981), 25-42.

\bibitem{ErSi}
P. Erd\H os, M. Simonovits, Supersaturated graphs and hypergraphs.
{\it Combinatorica} {\bf 3} (1983), no. 2, 181--192.

%
\bibitem{FrFu83}
 P. Frankl, Z. F\"uredi, A new generalization of the Erd\H{o}s-Ko-Rado theorem, {\it Combinatorica} {\bf 3} (1983), 341-349.

\bibitem{FS05} Z. F\"uredi and M. Simonovits, 
Triple systems not containing a Fano configuration,
\textit{Combin. Probab. Comput.} \textbf{14} (2005), 467–484.

\bibitem{FMP} Z. F\"uredi, D. Mubayi and O. Pikhurko, Quadruple
  systems with independent neighborhoods, 
 \textrm{J. Comb. Theory Ser. A} \textbf{115}
  (2008), 1552–1560.

%
%
\bibitem{GriKat} J. R. Griggs and G. O. H. Katona, {No
four subsets forming an $N$}, {\em J. Combinatorial Theory (Ser.
A)} {\bf 115} (2008), 677--685.

\bibitem{GriLi} J. R. Griggs and W.-T. Li, {The partition method
for poset-free families}, accepted by Journal of Combinatorial Optimization. 

\bibitem{GriLi2} J. R. Griggs and W.-T. Li, {Uniformly L-bounded
posets}, preprint (2011).

\bibitem{GriLu} J. R. Griggs and L. Lu, {On families of subsets
with a forbidden subposet}, {\em Combinatorics, Probability, and
Computing} {\bf 18} (2009), 731--748.

\bibitem{GriLiLu} J.~R.~Griggs, W.-T.~Li,  and L.~Lu,
Diamond-free Families, {\em Journal of Combinatorial Theory
Ser. A}, {\bf 119} (2012) 310-322.

\bibitem{GriLu} J.~R.~Griggs and L.~Lu, {On families of subsets
with a forbidden subposet}, {\em Combinatorics, Probability, and
Computing} {\bf 18} (2009), 731--748.


\bibitem{KNS}
Gy Katona, T. Nemetz, M. Simonovits, On a problem of Tur\'an in the
theory of graphs. {\it Mat. Lapok} {\bf 15} (1964) 228--238.

\bibitem{KatTar}  G. O. H. Katona and T. G. Tarj\'an, {Extremal problems
with excluded subgraphs in the $n$-cube},  in: M. Borowiecki, J.
W. Kennedy, and M. M. Sys\l o (eds.) {\bf Graph Theory}, \L
ag\'ow, 1981, {\em Lecture Notes in Math.}, {\bf 1018} 84--93,
Springer, Berlin Heidelberg New York Tokyo, 1983.

\bibitem{KeevashSurvey}
P. Keevash, Hypergraph Tur\'an Problems, {\it Surveys in Combinatorics}, Cambridge University Press, 2011, 80-140.

%
\bibitem{KS05a} P. Keevash and B. Sudakov, The Tur\'an number of the
  Fano plane, \textit{Combinatorica} \textbf{25} (2005), 561-574.

\bibitem{KS05b} P. Keevash and B. Sudakov, The Tur\'an number of the
  Fano plane,\textit{Combinatorica} \textbf{25} (2005), 561-574.

\bibitem{KMY} L.~Kramer, R.~Martin, M.~Young,
On diamond-free subposets of the Boolean lattice,
\verb+http://arxiv.org/abs/1205.1501+.

\bibitem{Lemons}
N. W. Lemons, {\em Tur\'an Problems for Hypergraphs},
dissertation, Central European University, Budapest, Hungary, (2008).

\bibitem{crowns}Linyuan Lu,
 On crown-free families of subsets,
arxiv:1206.6258v1 [math.CO].

\bibitem{MS} Manske and Shen, Three Layer $Q_2$-Free Families in the Boolean
Lattice, arxiv:1108.4373 [math.CO].

%
%
\bibitem{MZ} D. Mubayi and Y. Zhao,
Non-Uniform Turán-Type problems,
{\em J. Comb. Th. A}, {\bf 111} (2004), 106--110.


 \bibitem{Pik}
O. Pikhurko, Exact computation of the hypergraph Tur\'an function for expanded complete 2-graphs,
Accepted by \textit{J. Combin. Theory Ser. B} publication suspended for an indefinite time, see
http://www.math.cmu.edu/$\sim$pikhurko/Copyright.html.

\bibitem{Razborov} A. A. Razborov, On 3-hypergraphs with forbidden 4-vertex configurations,
\textit{SIAM J. Disc. Math.} \textbf{24} (2010), 946–963.

%
%
%
%
%
%
\bibitem{Tha} H. T. Thanh,
An extremal problem with excluded subposets in the Boolean lattice,
{\em Order} {\bf 15} (1998), 51--57.

\bibitem{Tu}
P. Tur\'an, On an extremal problem in graph theory. \emph{Mat. Fiz.
Lapok} \textbf{48} (1941), 436--452.
\end{thebibliography}
\end{document}